%June 13, 2020. Submittted to Revista Matemática Iberoamericana

\documentclass[12pt, reqno,draft]{amsart}
\usepackage{amsmath}
\usepackage{amssymb}
\usepackage{amsthm}
\usepackage{enumerate}
\usepackage{xcolor}
\usepackage{mathrsfs}
\usepackage[normalem]{ulem}
\usepackage{mathtools}
\usepackage[abbrev]{amsrefs}
\usepackage{graphicx}
\usepackage[all]{xy}
\usepackage[utf8]{inputenc}
\usepackage{ifthen}
\usepackage{tikz}
\usepackage{enumitem}

\newcommand{\Env}[2][]{%
\ifthenelse{ \equal{#1}{} }
{\ensuremath{#2_{\mathsf{c}}}}
{\ensuremath{#2_{\mathsf{c},#1}}}
}

\newcommand{\A}{\ensuremath{\mathcal{A}}}
\newcommand{\RR}{\ensuremath{\mathcal{R}}}
\newcommand{\QQ}{\ensuremath{\mathcal{Q}}}
\newcommand{\VV}{\ensuremath{\mathcal{V}}}
\newcommand{\BB}{\ensuremath{\mathcal{B}}}
\newcommand{\LL}{\ensuremath{\mathscr{L}}}
\newcommand{\C}{\ensuremath{\mathcal{C}}}
\newcommand{\Dy}{\ensuremath{\mathcal{D}}}
\newcommand{\Lip}{\operatorname{\rm{Lip}}}
\newcommand{\Rea}{\ensuremath{\mathbb{R}}}
\newcommand{\Nat}{\ensuremath{\mathbb{N}}}
\newcommand{\Int}{\ensuremath{\mathbb{Z}}}
\newcommand{\MM}{\ensuremath{\mathcal{M}}}
\newcommand{\NN}{\ensuremath{\mathcal{N}}}
\newcommand{\F}{\ensuremath{\mathcal{F}}}
\newcommand{\B}{\ensuremath{\mathcal{B}}}
\newcommand{\Id}{\ensuremath{\mathrm{Id}}}

\newcommand{\co}{\operatorname{\mathrm{co}}\nolimits}
\newcommand{\Ker}{\operatorname{\rm{Ker}}}
\newcommand{\spn}{\operatorname{\rm{span}}}
\newcommand{\sgn}{\operatorname{\rm{sgn}}}

\allowdisplaybreaks

\newtheorem{Theorem}{Theorem}[section]
\newtheorem{Lemma}[Theorem]{Lemma}

\newtheorem{Corollary}[Theorem]{Corollary}

\theoremstyle{remark}

\newtheorem{Definition}[Theorem]{Definition}

\newtheorem{Question}[Theorem]{Question}

\AtBeginDocument{% THIS CAUSES THAT MR NUMBERS ARE OMITTED IN THE REFERENCES
\def\MR#1{}
}

\subjclass[2010]{46B20;46B03;46B07;46A35;46A16}

\keywords{Lipschitz free space, Quasi-Banach space, Lipschitz free $p$-space, Schauder basis, $\LL_p$-space, isomorphic theory of Banach spaces}

\begin{document}
\title[Structure of the Lipschitz free $p$-spaces $\F_p(\Int^d)$ and $\F_p(\Rea^d)$]
{Structure of the Lipschitz free $p$-spaces $\F_p(\Int^d)$ and $\F_p(\Rea^d)$ for $0<p\le 1$}

\author[F. Albiac]{Fernando Albiac}
\address{Department of Mathematics, Statistics, and Computer Sciences--InaMat$^2$ \\
Universidad P\'ublica de Navarra\\
Campus de Arrosad\'{i}a\\
Pamplona\\
31006 Spain}
\email{fernando.albiac@unavarra.es}

\author[J. L. Ansorena]{Jos\'e L. Ansorena}
\address{Department of Mathematics and Computer Sciences\\
Universidad de La Rioja\\
Logro\~no\\
26004 Spain}
\email{joseluis.ansorena@unirioja.es}

\author[M. C\'uth]{Marek C\'uth}
\address{Faculty of Mathematics and Physics, Department of Mathematical Analysis\\
Charles University\\
186 75 Praha 8\\
Czech Republic}
\email{cuth@karlin.mff.cuni.cz}

\author[M. Doucha]{Michal Doucha}
\address{Institute of Mathematics\\
Czech Academy of Sciences\\
\v Zitn\'a 25\\
115 67 Praha 1\\
Czech Republic}
\email{doucha@math.cas.cz}

\begin{abstract} Our aim in this article is to contribute to the theory of Lipschitz free $p$-spaces for $0<p\le 1$ over the Euclidean spaces $\Rea^d$ and $\Int^d$. To that end, on one hand we show that $\F_p(\Rea^d)$ admits a Schauder basis for every $p\in(0,1]$, thus generalizing the corresponding result for the case $p=1$ by H\'ajek and Perneck\'a \cite{HP2014}*{Theorem 3.1} and answering in the positive a question that was raised in \cite{AACD2019}. Explicit formulas for the bases of both $\F_p(\Rea^d)$ and its isomorphic space $\F_p([0,1]^d)$ are given. On the other hand we show that the well-known fact that $\F(\Int)$ is isomorphic to $\ell_{1}$ does not extend to the case when $p<1$, that is, $\F_{p}(\Int)$ is not isomorphic to $\ell_{p}$ when $0<p<1$.
\end{abstract}

\thanks{F. Albiac acknowledges the support of the Spanish Ministry for Economy and Competitivity under Grant MTM2016-76808-P for \emph{Operators, lattices, and structure of Banach spaces}. F. Albiac and J.~L. Ansorena acknowledge the support of the Spanish Ministry for Science, Innovation, and Universities under Grant PGC2018-095366-B-I00 for \emph{An\'alisis Vectorial, Multilineal y Aproximaci\'on}. M.~C\'uth has been supported by Charles University Research program No. UNCE/SCI/023. M. Doucha was supported by the GA\v{C}R project EXPRO 20-31529X and RVO: 67985840.}

\maketitle

\section{Introduction and background}
\noindent Suppose $0<p\le 1$. Given a pointed $p$-metric space $\MM$ it is possible to construct a unique $p$-Banach space $\F_p(\MM)$ in such a way that $\MM$ embeds isometrically in $\F_{p}(\MM)$ via a canonical map denoted $\delta_\MM$, and for every $p$-Banach space $X$ and every Lipschitz map $f\colon \MM\to X$ with Lipchitz constant $\Lip(f)$ that maps the base point $0$ in $\MM$ to $0\in X$ extends to a unique linear bounded map $T_f\colon \F_p(\MM)\to X$ with $\Vert T_f\Vert =\Lip(f)$. The space $\F_p(\MM)$ is known as the \emph{Lipschitz free $p$-space} over $\MM$. 
This class of $p$-Banach spaces provides a canonical linearization process of Lipschitz maps between $p$-metric spaces: any Lipschitz map $f$ from a $p$-metric space $\MM_1$ to a $p$-metric space $\MM_{2}$ which maps the base point in $\MM_1$ to the base point in $\MM_2$ extends to a continuous linear map $L_f\colon \F_p(\MM_1)\to \F_p(\MM_2)$ with $\Vert L_f\Vert \le \Lip(f)$. 

Lipschitz free $p$-spaces were introduced in \cite{AlbiacKalton2009}, where they were used to provide for every for $0<p<1$ a couple of \emph{separable} $p$-Banach spaces which are Lipschitz-isomorphic without being linearly isomorphic. These spaces constitute a new family of $p$-Banach spaces which are easy to define but whose geometry is difficult to grasp. This task was undertaken by the authors in \cite{AACD2018} and continued in the articles \cites{AACD2019,AACD2020}.

Within this subject, it is specially interesting and challenging to understand the structure of Lipschitz free $p$-spaces over subsets of the metric space $\Rea$ (or more generally over $\Rea^d$ for $d\in\Nat$) endowed with the Euclidean distance (see \cite{AACD2018}*{Comments at the end of section \S5}). Although the papers \cites{AACD2019,AACD2020} do not focus on this kind of Lipschitz free $p$-spaces, they contain results that apply in particular to them. Let us gather the most significant advances concerning the geometry of Lipschitz free $p$-spaces over Euclidean spaces achieved in those papers. Some of them are explicitly stated in the articles \cites{AACD2019,AACD2020} (in which case we provide the reference), while others are straightforward consequences of more general results. In the list below we assume $0<p\le 1$.

\begin{enumerate}[label={\textbf{(A.\arabic*)}}, leftmargin=*,widest=iii]
\item\label{two:net} For every $d\in\Nat$ and every net $\NN$ in $\Rea^d$, $\F_p(\NN)\simeq\F_p(\Int^d)$ (\cite{AACD2019}*{Proposition~3.6}).

\item\label{two:basis} The space $\F_p([0,1])$ has a Schauder basis (\cite{AACD2019}*{Theorem~5.7}).

\item For every $d\in\Nat$, the space $\F_p(\Int^d)$ has a Schauder basis (\cite{AACD2019}*{Theorem~5.3}).

\item\label{three:isomorphismsRd} $\F_p(\Rea^d)\simeq \F_p([0,1])^d\simeq \F_p(\Rea_+^d)\simeq \F_p(S^d)\simeq \ell_p(\F_p(\Rea^d))$ for every $d\in\Nat$ (\cite{AACD2020}*{Theorem~4.15, Corollary~4.17 and Theorem~4.21}).

\item\label{three:isomorphismsNd} $\F_p(\Int^d)\simeq \F_p(\Nat^d) \simeq \ell_p(\F_p(\Int^d))$ for every $d\in\Nat$ (\cite{AACD2020}*{Theorems 5.8 and 5.12}).

\item\label{three:local} The spaces $\F_p(\Int^d)$ and $\F_p(\Rea^d)$, despite being non-isomorphic, have the same local structure (\cite{AACD2020}*{Corollary 5.14}).

\item\label{three:complemeted} For every $d\in\Nat$ there is a constant $C=C(p,d)$ such that for every $\MM\subset\NN\subset\Rea^d$, the space $\F_p(\MM)$ is $C$-complemented in $\F_p(\NN)$ (\cite{AACD2020}*{Corollary 5.3}).

\item\label{three:piproperty} For every $0<p\le 1$, every $d\in\Nat$ and every $\MM\subset\Rea^d$, $\F_p(\MM)$ has the $\pi$-property (\cite{AACD2020}*{Corollary 5.3}).

\item\label{three:lp} For every $\MM\subset\Rea^d$ infinite, there is $\NN\subset\MM$ such that $\F_p(\NN) \simeq \ell_p$ (\cite{AACD2020}*{Theorem 3.2}).

\end{enumerate}

Combining \ref{three:complemeted} with \ref{three:isomorphismsRd}, and using Pe{\l}czy\'nski's decomposition method (see, e.g., \cite{AlbiacKalton2016}*{Theorem 2.2.3}), yields that $\F_p(\NN)\simeq \F_p(\Rea^d)$ whenever the subset $\NN$ of $\Rea^d$ has non-empty interior. So, the research on Lipschitz free $p$-spaces over subsets of Euclidean spaces reduces to the following two main topics (with non-empty intersection).

\begin{enumerate}[label={\textbf{(Q.\alph*)}}, leftmargin=*]
\item\label{QA} The study of the geometry of the spaces $\F_p(\Rea^d)$.

\item\label{QB} To contrast the spaces $\F_p(\NN)$ for different subsets $\NN\subset\Rea^d$ with empty interior.
\end{enumerate}

As far as the topic \ref{QA} is concerned, the main question suggested by the previous work on the subject is whether \ref{two:basis} extends to higher dimensions. Within the scale of approximation properties, the existence of a Schauder basis is the most demanding one. Thus, this question also connects with \ref{three:piproperty}, which states in particular that $\F_p([0,1]^d)$ has the $\pi$-property.

The approximation properties of Lipschitz free spaces (i.e., Lipschitz free $p$-spaces for $p=1$) has attracted a lot of attention from the specialists from the upsurge of interest in the theory back 2003. We refer the reader to \cites{GodefroyKalton2003,Dalet2015,GodefroyOzawa2014,LP2013,AmbrosioPuglisi2016,PS2015,Dalet2015b,Godefroy2015,Godefroy2015b,WojFonf2008} for a non exhaustive list of papers containing positive answers to this question within different frameworks. In contrast, determining whether a given Lipschitz free space has a Schauder basis has shown to be a somewhat elusive task. To the best of our knowledge, the list of papers that achieve positive answers in this direction reduces to \cites{HP2014,HN2017,CuthDoucha2016,DoKa2020}.

As far as Lipschitz-free spaces over Euclidean spaces is concerned, it is known that $\F(\Rea^d)$ has a Schauder basis (see \cite{HP2014}*{Theorem 3.1}). One of the goals of this paper is to see whether analogous statements hold for the more general case of Lipschitz-free $p$-spaces for $p\in(0,1]$. In this spirit, here we extend this result by proving that $\F_p(\Rea^d)$ admits a Schauder basis for every $p\in(0,1]$, thus answering in the positive \cite{AACD2019}*{Question 6.5}. Moreover, exact formulas for the basis of both $\F_p(\Rea^d)$ and its isomorphic space $\F_p([0,1]^d)$ are provided. Section~\ref{Sect:Basis} will be devoted to take care of this (see Theorem~\ref{thm:basis01d} and Theorem~\ref{thm:basisRd}).

As for the topic \ref{QB}, it is natural to initiate the study with uniformly separated subsets. Note that if $\MM\subset \Rea^d$ is uniformly separated, then it is contained in a net $\NN$. Consequently, by \ref{two:net}, \ref{three:lp} and \ref{three:complemeted}, $\ell_p$ is complemented in $\F_p(\MM)$ and $\F_p(\MM)$ is complemented in $\F_p(\Int^d)$. Hence, if $\F_p(\Int^d)$ were isomorphic to $\ell_p$, applying Pe{\l}czy\'nski's decomposition method would give $\ell_p \simeq \F_p(\MM)$. So, our first task should be to determine whether $\F_p(\Int^d)$ is isomorphic to $\ell_p$ or not. It is known (\cite{NaorSchechtman2007}) that, for $d\ge 2$, $\F(\Int^d)$ is not isomorphic to $\ell_1$. Taking envelopes yields $\F_p(\Int^d)\not\simeq\ell_p$ for any $0<p\le 1$ and $d\ge 2$ (see \cite{AACD2018}*{Corollary 4.2}). On the other hand, it is known and easy to prove, that $\F(\Int)\simeq\ell_1$. More generally, we have $\F(\MM)\simeq\ell_1$ whenever $\MM$ is the closure of a zero-measure subset of $\Rea$ (see \cite{CuthDoucha2016}). This result was extended to $0<p<1$ by the authors replacing the Euclidean distance $|\cdot|$ on $\mathbb R$ with its anti-snowflaking $|\cdot|^{1/p}$. That is, we have $\F_p(\Int,|\cdot|^{1/p})\simeq\ell_p$ for every $0<p\le 1$. A question that implicitly arose from \cite{AACD2018} is whether the same holds for the Euclidean distance, i.e., whether $\F_p(\Int)$ is isomorphic to $\ell_p$ or not for $0<p<1$. Section~\ref{sect:PpNotlp} is devoted to providing a negative answer to this problem.  Note that  $\F_p(\Int,|\cdot|^{\alpha})\simeq\ell_p$ for $0<\alpha<1$ and $0<p\le 1$ (see \cite{AACD2020}*{comments preceding Question 8}). So, this new result exhibits a somewhat surprising discontinuity in the behavior of the Lipschitz free $p$-spaces over the family of $p$-metric spaces $(\Int,|\cdot|^\alpha)$
for $0<\alpha\le 1/p$.

\subsection{Terminology}
Throughout this article we use standard facts and notation from quasi-Banach spaces and Lipschitz free $p$-spaces over quasimetric spaces as can be found in \cite{AACD2018}. Nonetheless, we will record the notation that is most heavily used. A \emph{quasi-norm} on a vector space $X$ over the real field $\Rea$ is a map $\Vert \cdot\Vert\colon X\to[0,\infty)$ satisfying $\Vert x\Vert>0$ unless $x=0$, $\Vert t\, x\Vert=|t| \, \Vert x\Vert$ for all $t\in\Rea$ and all $x\in X$, and
\begin{equation}\label{eq:qn}
\Vert x+y\Vert\le \kappa (\Vert x\Vert +\Vert y\Vert), \quad x,\, y\in X.
\end{equation}
for some constant $\kappa\ge 1$.
The optimal constant such that \eqref{eq:qn} will be called the \emph{modulus of concavity} of $X$.

Let $0<p\le 1$. If $\Vert \cdot\Vert$ fulfils the condition
\[
\Vert x+y\Vert^p \le \Vert x\Vert^p+\Vert y\Vert^p, \quad x,\, y\in X,
\]
we say that $\Vert \cdot\Vert$ is a \emph{$p$-norm.} Any $p$-norm is a quasi-norm with modulus of concavity at most $2^{1/p-1}$. A quasi-norm induces a Hausdorff vector topology on $X$. If $X$ is a complete topological vector space, we say that $(X,\Vert \cdot\Vert)$ is a \emph{quasi-Banach space}. A $p$-Banach space will be a quasi-Banach space equipped with a $p$-norm.

A quasi-Banach space $X$ is said to have the \emph{bounded approximation property} (BAP for short) if there exists a net $(T_\alpha)_{\alpha\in\A}$ consisting of finite-rank linear operators with
\begin{equation*}
\sup_{\alpha\in\A} \Vert T_\alpha\Vert <\infty
\end{equation*}
that converges to $\Id_X$ uniformly on compact sets. If, moreover, each operator $T_\alpha$ is a projection we say that $X$ has the \emph{$\pi$-property}.

A \emph{Schauder basis} of a quasi-Banach space $X$ is a sequence $(x_n)_{n=1}^\infty$ in $X$ such that for every $x\in X$ there is a unique sequence $(a_n)_{n=1}^\infty$ with $x=\sum_{n=1}^\infty a_n\, x_n$. Associated to the Schauder basis $(x_n)_{n=1}^\infty$ the partial-sum projections $P_m\colon X\to X$, $m\in\Nat$, given by
\[
x=\sum_{n=1}^\infty a_n\, x_n\mapsto P_m(x)= \sum_{n=1}^m a_n\, x_n,
\]
are uniformly bounded. Therefore, if a quasi-Banach space $X$ has a Schauder basis, then it has both the BAP and the $\pi$-property. Conversely, given a sequence $(P_m)_{m=1}^\infty$ of linear maps from $X$ into $X$ such that $\sup_m \Vert P_m\Vert<\infty$, $\cup_{m=1}^\infty P_m(X)$ is dense in $X$, $\dim(P_m(X))=m$, and $P_m\circ P_n =P_{\min\{n,m\}}$ for all $n$, $m\in\Nat$, there is a Schauder basis whose associated projections are $(P_m)_{m=1}^\infty$. Namely, if for each $n\in\Nat$ we pick an arbitrary non-zero vector $x_n$ in the one-dimensional space $P_n(X)\cap \Ker(P_{n-1})$, then $(x_n)_{n=1}^\infty$ is such a Schauder basis.

We say that a quasi-Banach space $X$ is \emph{$K$-complemented} in $Y$ if there are bounded linear maps $S\colon X\to Y$ and $P\colon Y\to X$ with $P\circ S=\Id_X$ and $\Vert S\Vert\, \Vert T\Vert\le K$. If, moreover, $S\circ P=\Id_Y$, the spaces $X$ and $Y$ are said to be \emph{$K$-isomorphic.} In the case when $S$ is the inclusion map, that is, $P$ is a projection onto $X$, we say that $X$ is a $K$-complemented subspace of $Y$. If the constant $K$ is irrelevant, we simply drop it from the notation. We can define similar notions replacing the quasi-Banach spaces $X$ and $Y$ with metric, or quasi-metric, spaces $\MM$ and $\NN$, and replacing bounded linear maps with Lipschitz maps. In the ``metric case'', we will say that $\MM$ is a Lipschitz retract of $\NN$ with constant $K$, or that $\MM$ and $\NN$ are Lipschitz isomorphic with distortion at most $K$, respectively.

A subset $\NN$ of a metric space $(\MM,d)$ is said to be \emph{uniformly separated} if
\[
\inf\{ d(x,y)\colon x,y\in\NN, x\not=y\}>0.
\]
A \emph{net} is a uniformly separated set $\NN$ with $\sup_{x\in\MM} d(x,\NN) <\infty$.

We use the symbol $\Nat_*$ to denote the set consisting of all non-negative integers, i.e., $\Nat_*=\Nat\cup\{0\}$.  Given $n\in\Nat$ we will put $\Nat_n=\Int\cap[0,n]$.

\section{The $p$-Banach space $\F_p(\Int^d)$ is not isomorphic to $\ell_p$}\label{sect:PpNotlp}
\noindent Once we conjecture that two quasi-Banach spaces are not isomorphic, the best strategy for substantiating our guess is to come up with a feature that tells apart them. We find this wished-for property within the theory of locally complemented subspaces and $\LL_p$-spaces developed by Kalton in \cite{Kalton1984}. A subspace $X$ of a quasi-Banach space $Y$ is said to be \emph{locally $K$-complemented} in $Y$ for some $K>0$, if for every finite-dimensional space $V\subset Y$ and every $\varepsilon>0$ there is $P\colon V\to X$ with
\begin{equation*}
\Vert P\Vert \le K \text{ and }\Vert P(x)-x\Vert\le\varepsilon\Vert x\Vert, \quad x\in V\cap X.
\end{equation*}
If the constant $K$ is irrelevant, we say that $X$ is locally complemented in $Y$.

A quasi-Banach space $X$ is a \emph{$\LL_p$-space,} $0<p<\infty$, if it is isomorphic to a locally complemented subspace of $L_p(\mu)$ for some measure $\mu$.
This notion was introduced in \cite{Kalton1984} with the aim of developing an attractive definition of a $\LL_p$-space for $0<p<1$ as it is not clear whether $L_p$($0<p<1$) would satisfy the analogue of the classical Lindenstrass-Pe{\l}czy\'{n}ski definition. For $p=1$, the above definition is equivalent to the classical one, while for $1<p<\infty$ the only exception to the equivalence is that, with the definition used in this paper, Hilbert spaces become $\LL_p$-spaces.

\begin{Lemma}\label{lem:slpdf}
Let $0<p<\infty$, let $X$ be a quasi-Banach space and let $(V_\alpha)_{\alpha\in A}$ be an increasing net consisting of finite-dimensional subspaces of $X$ with $\overline{\cup_{\alpha\in A} V_\alpha}=X$. Suppose that there is $K\in(0,\infty)$ such that for every $\alpha\in A$ and every $\varepsilon>0$ there is $S\colon V_\alpha\to \ell_p$ and $T \colon \ell_p \to X$ with
\[
\Vert T\Vert\,\Vert S\Vert \le K \text{ and }\Vert T(S(x))-x\Vert\le\varepsilon\Vert x\Vert, \quad x\in V_\alpha.
\]
Then $X$ is an $\LL_p$-space.
\end{Lemma}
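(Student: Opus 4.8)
The plan is to realize $X$, up to isomorphism, as a locally complemented subspace of an ultraproduct of copies of $\ell_p$, and then to invoke that for $0<p<\infty$ an ultraproduct of $L_p$-spaces is again an $L_p(\mu)$-space; since $\ell_p=L_p(\mu_0)$ for the counting measure $\mu_0$ on $\Nat$, this ultraproduct is an honest $L_p(\mu)$, whence membership in the class of $\LL_p$-spaces is immediate. To set this up I first normalize: for the pair $(S,T)$ attached to a given $\alpha$ and $\varepsilon$, replacing $S$ by $S/\Vert S\Vert$ and $T$ by $\Vert S\Vert\,T$ leaves $T\circ S$ and the product $\Vert T\Vert\,\Vert S\Vert$ unchanged, so I may assume $\Vert S\Vert\le 1$ and $\Vert T\Vert\le K$ (note $S\neq 0$ once $\varepsilon<1$ and $V_\alpha\neq\{0\}$). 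Indexing by $I=A\times\Nat$ and writing $(S_i,T_i)$ for the pair attached to $i=(\alpha,n)$ with tolerance $\varepsilon=1/n$, I fix an ultrafilter $\mathcal{U}$ on $I$ refining the section filter of the directed set $I$; thus along $\mathcal{U}$ the first coordinate eventually dominates any fixed $\beta\in A$ and $n\to\infty$. Let $\mathcal{X}=\big(\prod_{i\in I}\ell_p\big)_{\mathcal{U}}$, a $p$-Banach space which, by the quoted stability of $L_p$-spaces under ultraproducts, is isometric to some $L_p(\mu)$.

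Next I would build the embedding $J\colon X\to\mathcal{X}$. For $x$ in the dense subspace $\bigcup_\beta V_\beta$, say $x\in V_\beta$, the normalization $\Vert S_i\Vert\le 1$ makes $(S_i(x))_{i\ge(\beta,1)}$ bounded, so I set $J(x)=[(S_i(x))_i]$ (value $0$ off the tail, which is irrelevant modulo $\mathcal{U}$); this is well defined and linear, with $\Vert J(x)\Vert=\lim_{\mathcal{U}}\Vert S_i(x)\Vert\le\Vert x\Vert$. For the lower bound I use the $p$-norm: from $x=(x-T_iS_ix)+T_iS_ix$ I get $\Vert x\Vert^p\le\Vert x-T_iS_ix\Vert^p+K^p\Vert S_i(x)\Vert^p\le n^{-p}\Vert x\Vert^p+K^p\Vert S_i(x)\Vert^p$, and letting $i\to\mathcal{U}$ gives $\Vert x\Vert\le K\,\Vert J(x)\Vert$. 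Hence $J$ extends to an isomorphic embedding with $\tfrac1K\Vert x\Vert\le\Vert J(x)\Vert\le\Vert x\Vert$, and it remains only to show that $J(X)$ is locally complemented in $\mathcal{X}$.

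For the local complementation, fix a finite-dimensional $W\subseteq\mathcal{X}$ and $\varepsilon>0$. Since $J(\bigcup_\beta V_\beta)$ is dense in $J(X)$, I may pick a basis $J(x^{(1)}),\dots,J(x^{(r)})$ of $W\cap J(X)$ with each $x^{(k)}\in\bigcup_\beta V_\beta$, complete it to a basis of $W$, and fix representatives of all basis vectors, taking $(S_i(x^{(k)}))_i$ as representative of $J(x^{(k)})$. Reading off the $i$-th coordinates defines, for each $i$, a linear map $\phi_i\colon W\to X$ with $\phi_i(w)=T_i(w_i)$, and I set $P=J\circ\phi_i$ for a single index $i$ chosen along $\mathcal{U}$. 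The bound $\Vert\phi_i(w)\Vert\le K\Vert w_i\Vert$ together with $\Vert w_i\Vert\to\Vert w\Vert_{\mathcal{X}}$ (uniformly on the unit sphere of the fixed finite-dimensional $W$) confines $\Vert\phi_i\Vert$, hence $\Vert P\Vert$, to a member of $\mathcal{U}$; and for $w=J(x^{(k)})$ one has $P(w)-w=J\big(T_iS_ix^{(k)}-x^{(k)}\big)$, of norm $\le n^{-1}\Vert x^{(k)}\Vert$ as soon as the first coordinate of $i$ dominates the $\beta_k$ with $x^{(k)}\in V_{\beta_k}$. These are finitely many conditions, each holding on a member of $\mathcal{U}$, so their intersection lies in $\mathcal{U}$ and is nonempty; any $i$ in it furnishes $P\colon W\to J(X)$ with $\Vert P\Vert$ uniformly bounded and, by linearity and finite-dimensionality of $W\cap J(X)$, with $\Vert P(w)-w\Vert\le\varepsilon\Vert w\Vert$ on $W\cap J(X)$. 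Thus $J(X)$ is locally complemented in $L_p(\mu)$ and $X$ is an $\LL_p$-space.

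I expect the main obstacle to be exactly this last passage: converting the purely internal approximate factorizations through $\ell_p$ on the subspaces $V_\alpha$ into a genuine local complementation of $J(X)$ inside $\mathcal{X}$ for \emph{arbitrary} finite-dimensional $W\subseteq\mathcal{X}$, rather than merely $W\subseteq J(X)$. The resolution is to exploit that the definition of local complementation only demands the near-identity estimate on $W\cap J(X)$ (and mere boundedness elsewhere), and that one ultrafilter-selected index meets all the finitely many linear requirements simultaneously; the rescaling $\Vert S_i\Vert\le1$ is what keeps $J$ bounded, while the $p$-convexity inequality is what forces $J$ to be bounded below.
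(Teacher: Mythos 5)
Your strategy is genuinely different from the paper's. The paper's proof is a short perturbation argument: given an arbitrary finite-dimensional $V\subseteq X$ and $\varepsilon>0$, a standard approximation produces $\alpha$ and $J\colon V\to V_\alpha$ with $\Vert J(x)-x\Vert\le\varepsilon_0\Vert x\Vert$, one composes $J$ with the hypothesized $S$ and $T$, and then cites \cite{Kalton1984}*{Theorem 6.1}, which characterizes $\LL_p$-spaces precisely by uniform approximate factorization of the identity of finite-dimensional subspaces through $\ell_p$. You instead try to verify the definition directly by realizing $X$ as a locally complemented subspace of an ultraproduct of copies of $\ell_p$. That plan is reasonable in outline, but as written it has three gaps, one of them serious.

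First, the load-bearing assertion that $\bigl(\prod_{i}\ell_p\bigr)_{\mathcal{U}}$ is isometric to some $L_p(\mu)$ is quoted as known, but for $0<p<1$ this is not a standard textbook fact: it needs a Kakutani-type representation theorem for $p$-normed lattices (or a substitute), and it is essentially where the whole difficulty of the lemma lives --- comparable in depth to the Kalton theorem the paper invokes. Without a proof or a precise reference this is a genuine hole. Second, your lower bound for $J$ uses $\Vert x\Vert^p\le\Vert x-T_iS_ix\Vert^p+\Vert T_iS_ix\Vert^p$, i.e.\ that $X$ is $p$-normed; the lemma only assumes $X$ quasi-Banach (and allows $p>1$), so this inequality is unavailable. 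This is harmless: the quasi-norm inequality gives $\Vert x\Vert\le\kappa(n^{-1}\Vert x\Vert+K\Vert S_i(x)\Vert)$ and hence $\Vert x\Vert\le\kappa K\Vert J(x)\Vert$ in the limit along $\mathcal{U}$, only changing the constant. Third, you ``pick a basis $J(x^{(1)}),\dots,J(x^{(r)})$ of $W\cap J(X)$ with each $x^{(k)}\in\bigcup_\beta V_\beta$''; density of $J\bigl(\bigcup_\beta V_\beta\bigr)$ in $J(X)$ does not give this, since a dense subspace may meet a given finite-dimensional subspace only in $\{0\}$. You must instead take a basis of $W\cap J(X)$, approximate each basis vector from the dense subspace, and propagate the error --- which is exactly the ``standard argument'' that constitutes the entirety of the paper's proof. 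In short, the ultraproduct route can be completed, but only after supplying the $p<1$ ultraproduct representation and the perturbation step; the paper's argument is shorter and sidesteps the former entirely by outsourcing it to \cite{Kalton1984}*{Theorem 6.1}.
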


\begin{proof}
Let $V$ be a finite-dimensional subspace of $X$ and let $\varepsilon>0$. Set
\[
\varepsilon_0:=\min\left\{1, \frac{\varepsilon}{\kappa+2\kappa^2} \right\}.
\]
A standard argument yields $\alpha\in A$ and $J\colon V\to V_\alpha$ with $\Vert J(x)-x\Vert\le\varepsilon_0\Vert x\Vert$ for all $x\in V$. By hypothesis, there are $S\colon V_\alpha\to\ell_p$ and $T\colon \ell_p\to X$ such that $\Vert T\Vert \,\Vert S\Vert \le K$ and $\Vert T(S(x))-x\Vert \le \varepsilon_0\Vert x\Vert$ for all $x\in V_\alpha$. We have
\[
\Vert J \Vert =\Vert J -\Id_V+\Id_V\Vert
\le \kappa (\varepsilon_0+1)\le 2\kappa.
\]
Hence $\Vert T\Vert \,\Vert S\circ J\Vert \le \Vert T\Vert \,\Vert S\Vert \,\Vert J\Vert \le 2\kappa K$.
If $x\in V$,
\begin{align*}
\Vert T(S(J(x)))-x \Vert
&\le \kappa( \Vert T(S(J(x)))-J(x)\Vert + \Vert J(x)-x\Vert)\\
&\le \kappa (\varepsilon_0 \Vert J(x)\Vert+ \varepsilon_0 \Vert x \Vert )\\
&\le \kappa \varepsilon_0 ( 2 \kappa \Vert x\Vert + \Vert x \Vert )
=\varepsilon \Vert x\Vert.
\end{align*}
Appealing to \cite{Kalton1984}*{Theorem 6.1} finishes the proof.
\end{proof}

It is known that complemented subspaces inherit the property of being $\LL_p$-spaces (see \cite{Kalton1984}*{Proposition 3.3}). For the sake of clarity, we state and prove the  quantitative version of this result, which is the one we will need.
\begin{Lemma}\label{lem:FDSLp}Let $X$ be an $\LL_p$-space and let $\lambda\in[1,\infty)$. There is a constant $K=K(X,\lambda)$ such that for every  quasi-Banach space $Y$ which is $\lambda$-complemented in $X$, every finite-dimensional subspace $V$ of $Y$, and every $\varepsilon>0$ there are bounded linear maps $S\colon V\to \ell_p$ and $T\colon\ell_p\to Y$ with $\Vert S \Vert \, \Vert T\Vert \le K$ and $\Vert T\circ S-\Id_V\Vert\le\varepsilon$.
\end{Lemma}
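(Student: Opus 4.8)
The plan is to split the statement into two parts: first I would establish the desired local factorization through $\ell_p$ for the ambient $\LL_p$-space $X$ \emph{itself}, with a constant $K_0=K_0(X)$, and then transfer it to an arbitrary $\lambda$-complemented subspace $Y$. The transfer is routine. If $Y$ is $\lambda$-complemented in $X$, pick $S_0\colon Y\to X$ and $P_0\colon X\to Y$ with $P_0\circ S_0=\Id_Y$ and $\Vert S_0\Vert\,\Vert P_0\Vert\le\lambda$. Given a finite-dimensional $V\subseteq Y$ and $\varepsilon>0$, apply the factorization for $X$ to the finite-dimensional subspace $S_0(V)$ with error $\varepsilon/\lambda$, obtaining $S_1\colon S_0(V)\to\ell_p$ and $T_1\colon\ell_p\to X$. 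Setting $S:=S_1\circ S_0|_V$ and $T:=P_0\circ T_1$ and using $P_0\circ S_0=\Id_Y$, one checks directly that $\Vert S\Vert\,\Vert T\Vert\le K_0\lambda$ and $\Vert T\circ S-\Id_V\Vert\le\varepsilon$. Thus $K:=K_0(X)\lambda$ works, and the whole problem reduces to the case $Y=X$.

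So I would concentrate on proving that an $\LL_p$-space $X$ locally factors through $\ell_p$ with a uniform constant. By definition there is a measure $\mu$ and an isomorphism $\Phi$ from $X$ onto a subspace $Z:=\Phi(X)$ of $L_p(\mu)$ that is locally $c$-complemented, with $\Vert\Phi\Vert\,\Vert\Phi^{-1}\Vert\le M$. Conjugating by $\Phi$ reduces matters to $Z$, so fix a finite-dimensional $W\subseteq Z$ and $\delta>0$. The key observation is that finite-dimensional subspaces of $L_p(\mu)$ factor isometrically through $\ell_p^n$ for the right reason in the quasi-Banach range: since simple functions are dense in $L_p(\mu)$, I approximate a basis of $W$ by simple functions supported on a common finite measurable partition $\pi=\{A_1,\dots,A_n\}$; the space $E_\pi$ of simple functions on $\pi$ is, after rescaling each indicator by $\mu(A_i)^{1/p}$, isometric to $\ell_p^n$, and its inclusion $\iota\colon E_\pi\to L_p(\mu)$ is isometric. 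This yields a linear map $S_2\colon W\to\ell_p^n$ with $\iota\circ S_2$ close to $\Id_W$; because $\iota$ is isometric, $\Vert S_2 w\Vert=\Vert\iota S_2 w\Vert$ is automatically comparable to $\Vert w\Vert$, so $\Vert S_2\Vert\le 2\kappa$ independently of $W$, where $\kappa=2^{1/p-1}$ bounds the modulus of concavity of a $p$-norm.

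To land back inside $Z\cong X$ rather than merely in $L_p(\mu)$, I would invoke local complementation: applying it to the finite-dimensional subspace $U:=E_\pi+W$ produces $R\colon U\to Z$ with $\Vert R\Vert\le c$ and $R$ close to the identity on $U\cap Z\supseteq W$. Then $S:=S_2\colon W\to\ell_p^n\hookrightarrow\ell_p$ and $T:=R\circ\iota\colon\ell_p^n\to Z$ satisfy $\Vert S\Vert\,\Vert T\Vert\le 2\kappa c$ and, after combining the two approximations, $\Vert T\circ S-\Id_W\Vert$ as small as desired. Pulling this back through $\Phi$ gives the factorization for $X$ with uniform constant $K_0(X)=2\kappa c M$, which closes the reduction.

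The main obstacle, and the reason $p<1$ requires care, is precisely the step producing $S$: for $1\le p<\infty$ one would take the conditional expectation onto $E_\pi$, but for $0<p<1$ the averaging operator is \emph{unbounded} on $L_p(\mu)$, so there is no projection onto the $\ell_p^n$-block. The remedy above sidesteps this by exploiting that the \emph{inclusion} of disjointly supported simple functions is isometric, which forces the correct uniform bound on $\Vert S\Vert$ with no projection at all; local complementation then serves only to supply the map $T$ with range in $X$. The delicate bookkeeping is to keep the approximation parameter---which legitimately depends on $W$---confined to the error term, so that the factorization constant depends solely on $X$ (through $c$ and $M$) and on $\lambda$. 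Alternatively, if one quotes Kalton's local structure theory, an $\LL_p$-space has its finite-dimensional subspaces sitting inside uniformly $\ell_p^n$-isomorphic subspaces, from which the case $Y=X$ is immediate and the constructive argument can be bypassed.
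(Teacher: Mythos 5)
Your proposal is correct, and its first half is precisely the paper's proof: the authors take $J\colon Y\to X$ and $P\colon X\to Y$ with $P\circ J=\Id_Y$ and $\Vert J\Vert\,\Vert P\Vert\le\lambda$, apply the factorization for $X$ to $J(V)$ with error $\varepsilon/\lambda$, and verify $\Vert S\circ J|_V\Vert\,\Vert P\circ T\Vert\le\lambda K_0$ together with $\Vert P\circ(T\circ S-\Id_{J(V)})\circ J\Vert\le\varepsilon$, exactly as you do. The divergence is in the base case $Y=X$: the paper quotes \cite{Kalton1984}*{Theorem 6.1} as a black box (that theorem characterizes $\LL_p$-spaces by exactly this uniform local factorization through $\ell_p$), whereas you reconstruct the needed direction from the definition. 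Your reconstruction is sound: approximating a basis of $W\subset Z\subset L_p(\mu)$ by simple functions over a common finite partition, using that the span $E_\pi$ of disjointly supported indicators is isometric to $\ell_p^n$ so that the near-identity $\iota\circ S_2$ forces $\Vert S_2\Vert\le 2^{1/p}$ with no appeal to a (for $p<1$ nonexistent) conditional-expectation projection, and then invoking local complementation on $E_\pi+W$ to produce $T=R\circ\iota$ landing in $Z$. The one point requiring care --- that the perturbation $\sum_i a_i(e_i-w_i)$ is controlled by a basis constant depending on $W$ --- you correctly confine to the error term, so the factorization constant depends only on the local complementation constant of $Z$ and the isomorphism constant onto $Z$. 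What your longer route buys is a self-contained proof of the cited ingredient; as your closing remark notes, quoting Kalton's theorem collapses your argument to the paper's.
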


\begin{proof}Let $J\colon Y\to X$ and $P\colon X\to Y$ be such that $\Vert J \Vert \, \Vert P\Vert \le \lambda$ and $P\circ J=\Id_Y$. By \cite{Kalton1984}*{Theorem 6.1} there are $S\colon J(V)\to \ell_p$ and $T\colon\ell_p\to X$ with $\Vert S \Vert \, \Vert T\Vert \le K_0$ and $\Vert T\circ S-\Id_{J(V)}\Vert\le\varepsilon/\lambda$, where $K_0\in[1,\infty)$ depends only on $X$. We have
\[
\Vert S\circ J|_V\Vert\,\Vert P\circ T\Vert
\le \Vert S\Vert\, \Vert J\Vert\,\Vert P\Vert\, \Vert T\Vert
\le \lambda K_0
\] and
\[
\Vert P\circ T\circ S\circ J|_V-\Id_V\Vert =\Vert P\circ(T\circ S-\Id_{J(V)})\circ J\Vert \le \varepsilon.\qedhere
\]
\end{proof}

Given a quasi-Banach space $X$ and $0<q\le 1$ the $q$-Banach envelope of $X$ is a pair $(\Env[q]{X},E_{X,q})$, where $\Env[q]{X}$ is a $q$-Banach space and $E_{X,q}\colon X\to \Env[q]{X}$ is a linear contraction, defined by the following universal property: for every bounded linear map $T\colon X\to Y$, where $Y$ is a $q$-Banach space, there is a unique linear map $T'\colon \Env[q]{X}\to Y$ with $\Vert T'\Vert\le \Vert T\Vert$ and $T=T'\circ E_{X,q}$. If $X$ and $Y$ are quasi-Banach spaces and $T\colon X\to Y$ is linear and bounded, there is a unique bounded linear map $\Env[q]{T}\colon \Env[q]{X}\to \Env[q]{Y}$ such that $\Env[q]{T}\circ E_{X,q}=E_{Y,q}\circ T$. Moreover, $\Vert \Env[q]{T}\Vert\le \Vert T\Vert$. For background on envelopes, see e.g.\@ \cite{AABW2019}*{\S 9}.

Banach envelopes inherit BAP. For further reference, we record this result.

\begin{Lemma}\label{lem:BAPEnv}
Let $X$ be a quasi-Banach space with the BAP. Then $\Env[q]{X}$ has the BAP for $0<q\le 1$.
\end{Lemma}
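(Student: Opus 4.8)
The plan is to push the finite-rank operators witnessing the BAP of $X$ through the envelope functor. Let $(T_\alpha)_{\alpha\in\A}$ be a net of finite-rank operators with $M:=\sup_\alpha\Vert T_\alpha\Vert<\infty$ converging to $\Id_X$ uniformly on compact subsets of $X$. For each $\alpha$ I would set $U_\alpha:=\Env[q]{T_\alpha}\colon \Env[q]{X}\to\Env[q]{X}$. Since $\Vert \Env[q]{T_\alpha}\Vert\le\Vert T_\alpha\Vert$, the net $(U_\alpha)$ is uniformly bounded by $M$. It then remains to check that each $U_\alpha$ has finite rank and that $U_\alpha\to\Id_{\Env[q]{X}}$ uniformly on compact sets. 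For the finite-rank assertion I would use the intertwining identity $U_\alpha\circ E_{X,q}=E_{X,q}\circ T_\alpha$: this shows that $U_\alpha$ maps the dense subspace $E_{X,q}(X)$ into the finite-dimensional subspace $E_{X,q}(T_\alpha(X))$. As a finite-dimensional subspace is closed and $U_\alpha$ is continuous, the whole range of $U_\alpha$ lies in $E_{X,q}(T_\alpha(X))$, so $U_\alpha$ has finite rank.

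The core of the argument is the convergence. On the dense image the intertwining identity gives, for $x\in X$,
\[
U_\alpha(E_{X,q}(x))-E_{X,q}(x)=E_{X,q}(T_\alpha(x)-x),
\]
and since $E_{X,q}$ is a contraction, $\Vert U_\alpha(E_{X,q}(x))-E_{X,q}(x)\Vert\le\Vert T_\alpha(x)-x\Vert$. Thus $U_\alpha\to\Id$ pointwise on $E_{X,q}(X)$, which is dense in $\Env[q]{X}$. The family $\{U_\alpha\}\cup\{\Id\}$ is equi-bounded in operator norm, hence equicontinuous, and a standard equicontinuity argument upgrades pointwise convergence on the dense set $E_{X,q}(X)$ to uniform convergence on every compact (equivalently, totally bounded) subset of $\Env[q]{X}$.

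I expect this last passage to be the main obstacle, since the convergence of $(T_\alpha)$ is controlled only on compact subsets of $X$, whereas I need control on compact subsets of the envelope. I would carry it out as follows. Fix a compact set $C\subseteq\Env[q]{X}$ and $\varepsilon>0$. Choose a finite $\delta$-net $z_1,\dots,z_k\in C$ and, using density of $E_{X,q}(X)$, pick $x_i\in X$ with $\Vert E_{X,q}(x_i)-z_i\Vert$ small. For $z\in C$ within $\delta$ of some $z_i$, the $q$-norm inequality in $\Env[q]{X}$ yields
\[
\Vert U_\alpha(z)-z\Vert^q
\le \Vert U_\alpha(z-E_{X,q}(x_i))\Vert^q
+\Vert U_\alpha(E_{X,q}(x_i))-E_{X,q}(x_i)\Vert^q
+\Vert E_{X,q}(x_i)-z\Vert^q .
\]
The first and third summands are bounded by $M^q$ times quantities that are small uniformly in $\alpha$ and $z$ (by the choice of $\delta$ and of the $x_i$), while the middle summand is at most $\Vert T_\alpha(x_i)-x_i\Vert^q$ and tends to $0$ uniformly over the finitely many indices $i$. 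Choosing $\delta$ and the approximations fine enough makes the right-hand side smaller than $\varepsilon^q$ for all sufficiently advanced $\alpha$ and all $z\in C$, which is precisely uniform convergence on $C$. Together with the uniform bound and the finite-rank property, this shows that $\Env[q]{X}$ has the BAP.
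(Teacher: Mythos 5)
Your argument is correct: functoriality of the $q$-envelope gives the intertwining $\Env[q]{T_\alpha}\circ E_{X,q}=E_{X,q}\circ T_\alpha$, from which the uniform bound, the finite-rank property (via density of $E_{X,q}(X)$ and closedness of finite-dimensional subspaces), and the upgrade from pointwise convergence on a dense set to uniform convergence on compacta via equiboundedness all follow as you describe. The paper records this lemma without proof, so there is nothing to compare against; your write-up is precisely the standard argument the authors are implicitly invoking.
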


Given $0<p\le 1$, a subset $\C$ of a vector space $V$ is said to be \textit{absolutely $p$-convex} if for any $x$ and $y\in \C$ and any scalars $\lambda$ and $\mu$ with $\ |\lambda|^p+|\mu|^p \le 1$ we have $\lambda \, x + \mu\, y \in \C$. We will denote by $\co_p(Z)$ the \textit{$p$-convex hull} of $Z\subset V$, i.e., the smallest absolutely $p$-convex set containing $Z$.

\begin{Lemma}\label{lem:compacthull}
Let $Z$ be a $q$-Banach space and $K\subset Z$ be relatively compact. Then the absolutely $q$-convex hull $\co_q(K)$ of $K$ is relatively compact.
\end{Lemma}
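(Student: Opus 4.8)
The plan is to reduce the statement to showing that $\co_q(K)$ is totally bounded, since $Z$ is complete and in a complete metric space a set is relatively compact precisely when it is totally bounded (one may metrize the topology of the $q$-Banach space $Z$ by $d(x,y)=\Vert x-y\Vert^q$). The starting point will be the explicit description of the $q$-convex hull: $\co_q(K)$ consists of all finite sums $\sum_{i=1}^m \lambda_i z_i$ with $z_i\in K$ and $\sum_{i=1}^m |\lambda_i|^q\le 1$. That this set is absolutely $q$-convex and contains $K$ is immediate from the defining inequality $|\alpha|^q+|\beta|^q\le 1$, while minimality follows by a routine induction on the number of terms; hence it coincides with $\co_q(K)$.

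Next I would fix $\varepsilon>0$ and choose $\delta>0$ with $2^{1/q}\delta\le\varepsilon$. Since $K$ is relatively compact it is totally bounded, so there is a finite set $F=\{x_1,\dots,x_n\}$ with $K\subseteq \bigcup_{i=1}^n\{z:\Vert z-x_i\Vert\le\delta\}$. The key approximation step is to show that $\co_q(K)$ lies in the $\delta$-neighborhood of $\co_q(F)$: given $u=\sum_{i=1}^m\lambda_i z_i\in\co_q(K)$, pick for each $i$ some $w_i\in F$ with $\Vert z_i-w_i\Vert\le\delta$ and set $v=\sum_{i=1}^m\lambda_i w_i\in\co_q(F)$. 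The $q$-norm inequality then yields
\[
\Vert u-v\Vert^q\le\sum_{i=1}^m|\lambda_i|^q\,\Vert z_i-w_i\Vert^q\le\delta^q\sum_{i=1}^m|\lambda_i|^q\le\delta^q,
\]
so $\Vert u-v\Vert\le\delta$. It remains to observe that $\co_q(F)$ is itself totally bounded: the same estimate shows every element of $\co_q(F)$ has norm at most $\max_i\Vert x_i\Vert$, so $\co_q(F)$ is a bounded subset of the finite-dimensional space $\spn(F)$, on which the quasi-norm topology coincides with the Euclidean one; hence $\overline{\co_q(F)}$ is compact and there is a finite set $G$ with $\co_q(F)\subseteq\bigcup_{w\in G}\{z:\Vert z-w\Vert\le\delta\}$.

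Combining the two inclusions finishes the argument: for $u\in\co_q(K)$ there are $v\in\co_q(F)$ and $w\in G$ with $\Vert u-v\Vert\le\delta$ and $\Vert v-w\Vert\le\delta$, whence $\Vert u-w\Vert^q\le\Vert u-v\Vert^q+\Vert v-w\Vert^q\le 2\delta^q$ and so $\Vert u-w\Vert\le 2^{1/q}\delta\le\varepsilon$. Thus $G$ is a finite $\varepsilon$-net for $\co_q(K)$, and since $\varepsilon>0$ was arbitrary, $\co_q(K)$ is totally bounded, hence relatively compact. I do not expect a genuine obstacle here—this is the natural $q$-convex analogue of the classical fact that the closed convex hull of a compact set in a Banach space is compact—but the point demanding care is the bookkeeping of the $q$-norm power inequality, which prevents the approximation error from accumulating across the arbitrarily many terms of a $q$-convex combination and plays the role that the triangle inequality plays in the Banach-space case.
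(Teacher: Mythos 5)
Your proof is correct, and its overall strategy coincides with the paper's: both reduce the claim to total boundedness, take a finite $(2^{-1/q}\varepsilon)$-net $F$ for $K$, and use the $q$-norm inequality $\Vert\sum\lambda_i(z_i-w_i)\Vert^q\le\sum|\lambda_i|^q\Vert z_i-w_i\Vert^q$ to show that $\co_q(F)$ approximates $\co_q(K)$ to within $\delta$. The two arguments part ways only in the last step, namely in showing that $\co_q(F)$ is itself totally bounded. The paper does this by hand: after first symmetrizing $K$ (so that it can restrict to nonnegative coefficients), it discretizes the coefficients to multiples of $2^{-s}$ for a suitably large $s$ and exhibits an explicit finite net $\{2^{-s}\sum_j b_j y_j : b_j\in\Nat_*,\ \sum_j b_j^q\le 2^{sq}\}$. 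You instead observe that $\co_q(F)$ is a quasi-norm-bounded subset of the finite-dimensional space $\spn(F)$ and invoke the standard fact that all Hausdorff vector topologies on a finite-dimensional space coincide, so that bounded sets there are relatively compact. Your route is shorter and avoids both the symmetrization and the coefficient bookkeeping, at the cost of importing that (standard but not entirely trivial) finite-dimensional fact; the paper's route is fully self-contained and produces the $\varepsilon$-net explicitly. Both are valid.
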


\begin{proof}
Since the map $(t,x)\mapsto tx$ is continuous and the unit sphere of the scalar field is compact, we can suppose that $t x\in K$ whenever $x\in K$ and $|t|=1$. If suffices to prove that $\co_q(K)$ possesses a finite $\varepsilon$-net for every $\varepsilon>0$. Let $\NN_0$ be a finite $(2^{-1/q}\varepsilon)$-net for $K$. Then
\[
\NN_1=\left\{\sum_{k=1}^m a_k x_k\colon x_k\in\NN_0,\, a_k\ge 0,\, \sum_{k=1}^m a_k^q\le 1\right\}
\]
is a $(2^{-1/q}\varepsilon)$-net for $\co_q(K)$

Enumerate $\NN_0=\{ y_j \colon 1\le j \le n\}$, and let $s\in\Nat$ be such that $ 2n \sup_j\Vert y_j\Vert^q \le \varepsilon^q 2^{sq}$. We will conclude the proof by showing that
\[
\left\{2^{-s} \sum_{j=1}^n b_j y_j\colon b_j\in\Nat_*,\,
\sum_{j=1}^n b_j^q\le 2^{sq}\right\}
\]
is a (finite) $(2^{-1/q}\varepsilon)$-net for $\NN_1$. Let $x\in\NN_1$. There is $(a_j)_{j=1}^n$ in $[0,\infty)$ such that $x=\sum_{j=1}^n a_j y_j$ and $\sum_{j=1}^n a_j^q\le 1$. For each $j=1$, \dots, $n$, let $b_j\in\Nat_*$ be such that $b_j\le 2^s a_j<b_j+1$. Then,
\[
\left\Vert x - 2^{-s} \sum_{j=1}^n b_j y_j\right\Vert^q\le
n 2^{-sq} \sup_j \Vert y_j\Vert^q \le \frac{\varepsilon^q}{2}.\qedhere
\]
\end{proof}

\begin{Theorem}\label{thm:compactEnv}
Let $T\colon X\to Y$ be a compact linear operator between quasi-Banach spaces $X$ and $Y$. Then the operator $\Env[q]{T}$ is compact for any $0<q\le 1$.
\end{Theorem}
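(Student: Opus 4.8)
To show that $\Env[q]{T}$ is compact it suffices to prove that $\Env[q]{T}(B_{\Env[q]{X}})$ is relatively compact in $\Env[q]{Y}$, where $B_{\Env[q]{X}}$ denotes the closed unit ball of $\Env[q]{X}$. The whole argument rests on the concrete description of this unit ball furnished by the construction of the envelope. Recall that the envelope $q$-norm on $X$ is the gauge of the $q$-convex hull of the unit ball $B_X$ of $X$, so that $E_{X,q}$ has dense range and the closed unit ball of $\Env[q]{X}$ is exactly the closure (taken in $\Env[q]{X}$) of $\co_q(E_{X,q}(B_X))$; see \cite{AABW2019}*{\S 9}. Thus I would start from the identity
\[
B_{\Env[q]{X}}=\overline{\co_q\bigl(E_{X,q}(B_X)\bigr)}.
\]

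The next step is a short computation exploiting the defining commutation relation $\Env[q]{T}\circ E_{X,q}=E_{Y,q}\circ T$ together with the fact that a \emph{linear} map carries $q$-convex hulls onto $q$-convex hulls, i.e.\ $\Env[q]{T}(\co_q(Z))=\co_q(\Env[q]{T}(Z))$ for every $Z\subset\Env[q]{X}$. Combining these with the continuity of $\Env[q]{T}$ gives the chain
\[
\Env[q]{T}\bigl(B_{\Env[q]{X}}\bigr)
\subseteq\overline{\Env[q]{T}\bigl(\co_q(E_{X,q}(B_X))\bigr)}
=\overline{\co_q\bigl(\Env[q]{T}(E_{X,q}(B_X))\bigr)}
=\overline{\co_q\bigl(E_{Y,q}(T(B_X))\bigr)}.
\]
Now I would invoke compactness of $T$: the set $T(B_X)$ is relatively compact in $Y$, and since $E_{Y,q}$ is a (continuous) contraction, $E_{Y,q}(T(B_X))$ is relatively compact in the $q$-Banach space $\Env[q]{Y}$. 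At this point Lemma~\ref{lem:compacthull} applies verbatim and yields that $\co_q\bigl(E_{Y,q}(T(B_X))\bigr)$ is relatively compact; hence its closure is compact, and the displayed inclusion shows $\Env[q]{T}(B_{\Env[q]{X}})$ is relatively compact, which is what we wanted.

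\textbf{Main obstacle.} The only delicate point is the very first identity, namely the identification of $B_{\Env[q]{X}}$ with the closed $q$-convex hull of $E_{X,q}(B_X)$. Everything else is either the elementary algebraic behaviour of $\co_q$ under linear maps, the trivial push-forward of relative compactness by a continuous map, or a direct citation of Lemma~\ref{lem:compacthull}. I would therefore either extract the unit-ball description cleanly from the explicit envelope-norm formula
\[
\Vert E_{X,q}(x)\Vert_{\Env[q]{X}}=\inf\Bigl\{\bigl(\textstyle\sum_{i}\Vert x_i\Vert^q\bigr)^{1/q}\colon x=\sum_i x_i\Bigr\}
\]
(whose unit ball is, up to closure, precisely $\co_q(E_{X,q}(B_X))$), or quote the corresponding statement in \cite{AABW2019}*{\S 9}, and then proceed as above. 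Care must also be taken that the closure in the displayed inclusion is the closure in $\Env[q]{Y}$, so that the compactness conclusion of Lemma~\ref{lem:compacthull} is applied inside the ambient $q$-Banach space where it was proved.
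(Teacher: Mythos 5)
Your proposal is correct and follows essentially the same route as the paper: identify $B_{\Env[q]{X}}$ with the closed $q$-convex hull of $E_{X,q}(B_X)$, push it through $\Env[q]{T}$ using linearity and the relation $\Env[q]{T}\circ E_{X,q}=E_{Y,q}\circ T$, and conclude with Lemma~\ref{lem:compacthull}. The only cosmetic difference is that the paper first reduces to the case where $Y$ is itself $q$-Banach (using that compact operators form an ideal), whereas you work directly in $\Env[q]{Y}$ and push the relative compactness of $T(B_X)$ forward through the contraction $E_{Y,q}$; both handle the same point equally well.
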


\begin{proof}
Since the space of compact operators forms an ideal, we can assume that $Y$ is $q$-Banach, so that $\Env[q]{T}$ is a map from $\Env[q]{X}$ into $Y$ with $\Env[q]{T}\circ E_{q,X}=T$. Then, by construction,
\begin{align*}
\overline{\Env[q]{T}(B_{\Env[q]{X}})}
&= \overline{\Env[q]{T}(\co_q(E_{q,X}(B_X)))}\\
&= \overline{\co_q(\Env[q]{T}(E_{q,X}(B_X)))}\\
&=\overline{\co_q(T(B_X))},
\end{align*}
which is compact by Lemma~\ref{lem:compacthull}.
\end{proof}

Let $X$ be a subspace of a quasi-Banach space $Y$. We say that $X$ has the \emph{compact extension property} (CEP for short) in $Y$ if every compact operator $T\colon X\to Z$, where $Z$ is a quasi-Banach space, extends to a compact operator $\widetilde T\colon Y\to Z$. Let $0<q\le 1$. We say that $X$ has the \emph{compact extension property for $q$-Banach spaces} ($q$-CEP for short) in $Y$ if the compact extension property holds when $Z$ is a $q$-Banach space. If, moreover, we can ensure that $\Vert \widetilde T \Vert\le K \Vert T\Vert$ for some constant $K\in[1,\infty)$ (depending on $X$, $Y$ and $q$), we say that $X$ has the $q$-CEP in $Y$ with constant $K$. Notice that, by the Aoki-Rolewicz theorem, $X$ has the CEP in $Y$ if and only if $X$ has the $q$-CEP in $Y$ for every $0<q\le 1$. We have the following results in this respect.

\begin{Theorem}\label{thm:Lindens}
Let $X$ be a subspace of a quasi-Banach space $Y$. Suppose that $X$ has the $q$-CEP in $Y$ for some $0<q\le 1$. Then there is a constant $K\in[1,\infty)$ for which $X$ has the $q$-CEP in $Y$ with constant $K$.
\end{Theorem}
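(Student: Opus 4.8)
The plan is to argue by contradiction, assembling a single compact operator out of a sequence of progressively worse counterexamples, in the spirit of the classical Lindenstrauss argument for extension of compact operators. Suppose no such $K$ works. Then for every $n\in\Nat$ the $q$-CEP with constant $n$ fails, so there exist a $q$-Banach space $Z_n$ and a compact operator $T_n\colon X\to Z_n$ that admits no compact extension to $Y$ of norm at most $n\Vert T_n\Vert$. Since the zero operator extends the zero operator, necessarily $T_n\ne 0$, so after dividing by $\Vert T_n\Vert$ we may assume $\Vert T_n\Vert=1$; the normalization does not affect the non-existence statement, so now every compact extension of $T_n$ to $Y$ has norm strictly greater than $n$.

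Next I would glue these into one operator. Let $Z$ be the $c_0$-sum $\left(\bigoplus_{n}Z_n\right)_{c_0}$ with the norm $\Vert(z_n)_n\Vert=\sup_n\Vert z_n\Vert$; a direct check shows that $Z$ is a $q$-Banach space and that each coordinate projection $Q_n\colon Z\to Z_n$ satisfies $\Vert Q_n\Vert\le 1$. Fix positive scalars $\epsilon_n\to 0$, to be specified later, and define $T\colon X\to Z$ by $T(x)=(\epsilon_n T_n(x))_n$. This lands in $Z$ because $\epsilon_n\Vert T_n(x)\Vert\le\epsilon_n\Vert x\Vert\to 0$. Moreover $T$ is compact: the truncations $T^{(N)}(x)=(\epsilon_1 T_1(x),\dots,\epsilon_N T_N(x),0,0,\dots)$ are compact, and $\Vert T-T^{(N)}\Vert\le\sup_{n>N}\epsilon_n\to 0$, so $T$ is an operator-norm limit of compact operators and hence compact.

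Finally I would invoke the hypothesis. Since $X$ has the $q$-CEP in $Y$ and $Z$ is $q$-Banach, $T$ extends to a compact operator $\widetilde T\colon Y\to Z$; set $M=\Vert\widetilde T\Vert<\infty$. For each $n$ the operator $\frac{1}{\epsilon_n}\,Q_n\circ\widetilde T\colon Y\to Z_n$ is compact and restricts on $X$ to $\frac{1}{\epsilon_n}\,Q_n\circ T=T_n$, so it is a compact extension of $T_n$, of norm at most $M/\epsilon_n$. Because every compact extension of $T_n$ has norm greater than $n$, this yields $M/\epsilon_n>n$, that is $M>n\,\epsilon_n$ for all $n$. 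The whole argument hinges on choosing $(\epsilon_n)$ so that the two competing demands are compatible: $\epsilon_n\to 0$ is needed to keep $T$ compact, while $n\,\epsilon_n\to\infty$ is needed to force the contradiction. This is the main obstacle, since the bound $M$ is a priori unknown and itself depends on the choice of $(\epsilon_n)$, so one must fix $(\epsilon_n)$ in advance rather than after seeing $M$. Taking for instance $\epsilon_n=n^{-1/2}$ meets both requirements, and then $M>\sqrt{n}$ for every $n$ contradicts $M<\infty$, completing the proof.
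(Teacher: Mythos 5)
Your proof is correct, and it is essentially the argument the paper appeals to: the paper's ``proof'' simply cites Lindenstrauss's Theorem~2.2 from \emph{Extension of compact operators}, whose gliding-hump strategy --- assembling normalized counterexamples $T_n$ into a single compact operator into a $c_0$-sum and reading off a contradiction from the coordinate projections --- is exactly what you carry out, with the routine verifications (the sup-norm $c_0$-sum of $q$-Banach spaces is $q$-Banach, operator-norm limits of compact operators are compact) correctly adapted to the $q$-convex setting. The only difference is that you supply the details the paper leaves to the reference.
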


\begin{proof}
Although the proof of \cite{Lind1964}*{Theorem 2.2} was done for Banach spaces, the arguments therein apply to our case without any important modifications.
\end{proof}

\begin{Theorem}\label{thm:CEPvsLC}
Let $0<q\le 1$ and $Y$ be a $q$-Banach space with the BAP. Let $X$ be a closed subspace of $Y$. The following are equivalent.
\begin{enumerate}[label={(\roman*)}, leftmargin=*, widest=iii]
\item\label{thm:CEPvsLCitem1} $X$ is locally complemented in $Y$.
\item\label{thm:CEPvsLCitem2} $X$ has the BAP and the CEP in $Y$.
\item\label{thm:CEPvsLCitem3} $X$ has the BAP and the $q$-CEP in $Y$.
\end{enumerate}
\end{Theorem}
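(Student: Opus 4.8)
The plan is to establish the cycle of implications \ref{thm:CEPvsLCitem1} $\Rightarrow$ \ref{thm:CEPvsLCitem2} $\Rightarrow$ \ref{thm:CEPvsLCitem3} $\Rightarrow$ \ref{thm:CEPvsLCitem1}. The implication \ref{thm:CEPvsLCitem2} $\Rightarrow$ \ref{thm:CEPvsLCitem3} is immediate, since the CEP is by definition stronger than the $q$-CEP (the latter concerns only $q$-Banach target spaces) and the BAP hypothesis on $X$ is common to both. Thus the real content lies in \ref{thm:CEPvsLCitem1} $\Rightarrow$ \ref{thm:CEPvsLCitem2} and \ref{thm:CEPvsLCitem3} $\Rightarrow$ \ref{thm:CEPvsLCitem1}. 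The standing assumptions that $Y$ is $q$-Banach (so that $\Vert\cdot\Vert^q$ is subadditive and every closed subspace, in particular $X$, is again $q$-Banach) and that $Y$ has the BAP will be used precisely in the first of these.

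For \ref{thm:CEPvsLCitem1} $\Rightarrow$ \ref{thm:CEPvsLCitem2} I would treat the CEP and the BAP of $X$ separately. For the CEP, suppose $T\colon X\to Z$ is compact with $Z$ quasi-Banach, so that $\overline{T(B_X)}$ is compact. Local $K$-complementation supplies, for every finite-dimensional $V\subseteq Y$ and every $\varepsilon>0$, a map $P_{V,\varepsilon}\colon V\to X$ with $\Vert P_{V,\varepsilon}\Vert\le K$ and $\Vert P_{V,\varepsilon}(x)-x\Vert\le \varepsilon\Vert x\Vert$ on $V\cap X$. Directing the index set by inclusion of the $V$'s and decrease of $\varepsilon$, and fixing an ultrafilter $\mathcal U$ refining the order filter, I would set $\widetilde T(y)=\lim_{\mathcal U} T(P_{V,\varepsilon}(y))$ for each $y\in Y$; the limit exists because, once $y\in V$, the vector $T(P_{V,\varepsilon}(y))$ lies in the compact set $K\Vert y\Vert\,\overline{T(B_X)}$. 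Linearity of $\widetilde T$ follows from the linearity of each $P_{V,\varepsilon}$ on the $\mathcal U$-large set of indices $V\supseteq\spn\{y_1,y_2\}$, the bound $\Vert\widetilde T\Vert\le K\Vert T\Vert$ from the same compact containment, and $\widetilde T|_X=T$ from $\Vert P_{V,\varepsilon}(x)-x\Vert\le\varepsilon\Vert x\Vert\to 0$ together with continuity of $T$. Finally $\widetilde T(B_Y)\subseteq K\,\overline{T(B_X)}$ is relatively compact, so $\widetilde T$ is compact; note this argument delivers the full CEP, not merely the $q$-CEP, for any quasi-Banach $Z$.

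Still under \ref{thm:CEPvsLCitem1}, to see that $X$ inherits the BAP I would combine the BAP net $(T_\alpha)$ of $Y$ (finite-rank, $\Vert T_\alpha\Vert\le M$, converging to $\Id_Y$ uniformly on compacta) with the local projections. Given a compact $\mathcal K\subseteq X$ and $\varepsilon>0$, I would pick $T_\alpha$ close to the identity on $\mathcal K$, then choose a finite-dimensional $V\subseteq Y$ containing both $T_\alpha(Y)$ and a finite net $F\subseteq \mathcal K$ of $\mathcal K$, and apply local complementation to this $V$ to get $P$. The candidate operator is $S=P\circ T_\alpha|_X\colon X\to X$, finite-rank with $\Vert S\Vert\le KM$ uniformly. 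Here the main obstacle appears: local complementation controls $P-\Id$ only on $V\cap X$, whereas a typical $x\in\mathcal K$ need not lie in $V$. I would circumvent this by relaying through a net point $f\in F$ with $\Vert x-f\Vert$ small; since $f\in V\cap X$ and $T_\alpha f\in V$, the $q$-subadditivity of $\Vert\cdot\Vert^q$ lets me split $\Vert S(x)-x\Vert^q$ into four terms controlled respectively by the net fineness, the action of $T_\alpha$ on $\mathcal K$, the defect $\Vert P(f)-f\Vert$, and again the net fineness, each made small by the successive choices.

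For \ref{thm:CEPvsLCitem3} $\Rightarrow$ \ref{thm:CEPvsLCitem1} the argument reverses and is shorter. Fix a finite-dimensional $V\subseteq Y$ and $\varepsilon>0$. Since $V\cap X$ is finite-dimensional, its unit ball is compact, so the BAP of $X$ produces a finite-rank $R\colon X\to X$ with $\Vert R\Vert\le C$ and $\Vert R(x)-x\Vert\le\varepsilon\Vert x\Vert$ on $V\cap X$. As $R$ is compact and its target $X$ is $q$-Banach, the $q$-CEP extends it to a compact $\widetilde R\colon Y\to X$, and Theorem~\ref{thm:Lindens} guarantees a uniform bound $\Vert\widetilde R\Vert\le K_0\Vert R\Vert$ with $K_0$ independent of $R$. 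Then $P:=\widetilde R|_V\colon V\to X$ satisfies $\Vert P\Vert\le K_0 C=:K$ and, for $y\in V\cap X$, $P(y)=R(y)$, whence $\Vert P(y)-y\Vert\le\varepsilon\Vert y\Vert$; this is exactly local $K$-complementation with a constant independent of $V$ and $\varepsilon$. I expect the CEP construction in \ref{thm:CEPvsLCitem1} $\Rightarrow$ \ref{thm:CEPvsLCitem2} --- producing a single compact operator on all of $Y$ out of the family of local projections and verifying its compactness through a limit in a compact set --- to be the technically most delicate step, with the net-point relay used to transfer the BAP from $Y$ to $X$ a close second.
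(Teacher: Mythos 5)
Your argument is correct, but it is much more self-contained than what the paper actually writes: the paper's proof of this theorem is a two-line deduction that cites Kalton's Theorem~5.1 for the equivalence of \ref{thm:CEPvsLCitem1} and \ref{thm:CEPvsLCitem2}, observes that ``the very same proof'' yields \ref{thm:CEPvsLCitem3}~$\Rightarrow$~\ref{thm:CEPvsLCitem1}, and invokes Theorem~\ref{thm:Lindens} for the uniform constant --- exactly the point where you also invoke it. What you have done, in effect, is reconstruct the content of the cited proof: the ultrafilter limit of $T\circ P_{V,\varepsilon}$ inside the compact set $K\Vert y\Vert\,\overline{T(B_X)}$ is the standard mechanism for extending compact operators from locally complemented subspaces (and, as you note, it delivers the full CEP for arbitrary quasi-Banach targets, which is why \ref{thm:CEPvsLCitem1}~$\Rightarrow$~\ref{thm:CEPvsLCitem2} rather than merely \ref{thm:CEPvsLCitem3}); the relay through a finite net of the compact set, needed because local complementation only controls $P-\Id$ on $V\cap X$, is the right fix for transferring the BAP from $Y$ to $X$; and the converse direction via extending the finite-rank BAP operators $R$ (compact, with $q$-Banach target $X$) and restricting to $V$ is precisely how one gets local complementation with a constant $K_0C$ independent of $V$ and $\varepsilon$. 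All the delicate points check out: the index set $\{(V,\varepsilon):y\in V\}$ is eventually in the order filter so the limit is defined for every $y$, balancedness of $\overline{T(B_X)}$ gives compactness of $\widetilde T(B_Y)$, and homogeneity reduces the BAP estimate on $V\cap X$ to the compact unit sphere. The paper's version buys brevity at the cost of asking the reader to inspect Kalton's proof to confirm the \ref{thm:CEPvsLCitem3}~$\Rightarrow$~\ref{thm:CEPvsLCitem1} claim; your version buys a verifiable, self-contained argument at the cost of length.
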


\begin{proof}
Although \cite{Kalton1984}*{Theorem 5.1} only establishes the equivalence between \ref{thm:CEPvsLCitem1} and \ref{thm:CEPvsLCitem2}, the very same proof gives that \ref{thm:CEPvsLCitem3} implies \ref{thm:CEPvsLCitem1}. So, taking into account Theorem~\ref{thm:Lindens}, \ref{thm:CEPvsLCitem3} is equivalent to \ref{thm:CEPvsLCitem1} and \ref{thm:CEPvsLCitem2}.
\end{proof}

\begin{Theorem}\label{lem:lcenv}
Let $X$ be a subspace of a quasi-Banach space $Y$. Denote by $J$ the inclusion of $X$ into $Y$. Suppose that $X$ has the BAP and that $X$ has the $q$-CEP in $Y$ for some $0<q\le 1$. Then $\Env[q]{J}$ is an isomorphic embedding and $\Env[q]{J}(\Env[q]{X})$ has the $q$-CEP in $\Env[q]{Y}$.
\end{Theorem}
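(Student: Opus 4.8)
The plan is to establish the two assertions separately: first that $\Env[q]{J}$ is bounded below (it is automatically a contraction, since $\|\Env[q]{J}\|\le\|J\|=1$), and then that its range has the $q$-CEP in $\Env[q]{Y}$. At the outset I would invoke Theorem~\ref{thm:Lindens} to fix a constant $K\in[1,\infty)$ for which $X$ has the $q$-CEP in $Y$ with constant $K$, and I would fix a net $(T_\alpha)_{\alpha\in A}$ of finite-rank operators on $X$ witnessing the BAP, with $C:=\sup_\alpha\|T_\alpha\|<\infty$ and $T_\alpha\to\Id_X$ uniformly on compact sets (in particular pointwise). The two tools used repeatedly are the functoriality relations $\Env[q]{J}\circ E_{X,q}=E_{Y,q}\circ J$ and $\|\Env[q]{T}\|\le\|T\|$, together with the universal property of the envelope, which lets me factor any bounded map from $X$ or $Y$ into a $q$-Banach space through $E_{X,q}$ or $E_{Y,q}$.

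For the embedding, the idea is to manufacture approximate left inverses of $\Env[q]{J}$ living on the envelope. For each $\alpha$, the map $E_{X,q}\circ T_\alpha\colon X\to\Env[q]{X}$ is finite-rank, hence compact, and its target $\Env[q]{X}$ is a $q$-Banach space; so the $q$-CEP furnishes a compact operator $R_\alpha\colon Y\to\Env[q]{X}$ extending it with $\|R_\alpha\|\le KC$. Factoring $R_\alpha$ through $E_{Y,q}$ yields $R_\alpha'\colon\Env[q]{Y}\to\Env[q]{X}$ with $R_\alpha'\circ E_{Y,q}=R_\alpha$ and $\|R_\alpha'\|\le KC$. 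A short chase through the defining identities shows that $R_\alpha'\circ\Env[q]{J}\circ E_{X,q}=E_{X,q}\circ T_\alpha$, i.e.\ the value of $R_\alpha'\circ\Env[q]{J}$ at $E_{X,q}(x)$ is $E_{X,q}(T_\alpha(x))$. Since $T_\alpha\to\Id_X$ pointwise and $E_{X,q}$ is continuous, $R_\alpha'\circ\Env[q]{J}\to\Id_{\Env[q]{X}}$ pointwise on the dense set $E_{X,q}(X)$; the uniform bound $KC$ then upgrades this to pointwise convergence on all of $\Env[q]{X}$. Passing to norms gives $\|\xi\|\le KC\,\|\Env[q]{J}(\xi)\|$ for every $\xi\in\Env[q]{X}$, which is the sought lower bound. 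Hence $\Env[q]{J}$ is an isomorphic embedding and $W:=\Env[q]{J}(\Env[q]{X})$ is a closed subspace of $\Env[q]{Y}$.

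For the compact extension property of $W$, I would argue directly by an ``extend-then-envelope'' scheme. Let $S\colon W\to Z$ be compact with $Z$ a $q$-Banach space. Then $S'=S\circ\Env[q]{J}\colon\Env[q]{X}\to Z$ is compact, and hence so is $S'\circ E_{X,q}\colon X\to Z$; applying the $q$-CEP of $X$ in $Y$ produces a compact extension $G\colon Y\to Z$ with $G|_X=S'\circ E_{X,q}$ and $\|G\|\le K\|S\|$. Because $Z$ is $q$-Banach we have $\Env[q]{Z}=Z$ and $E_{Z,q}=\Id_Z$, so the factorization of $G$ through $E_{Y,q}$ is precisely $\Env[q]{G}\colon\Env[q]{Y}\to Z$, which is compact by Theorem~\ref{thm:compactEnv}. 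Chasing the identities on the dense set $E_{X,q}(X)$ gives $\Env[q]{G}\circ\Env[q]{J}=S'=S\circ\Env[q]{J}$, and since $\Env[q]{J}$ maps onto $W$ this forces $\Env[q]{G}$ to restrict to $S$ on $W$. Thus every compact $S$ extends compactly, with $\|\Env[q]{G}\|\le K\|S\|$, so $W$ has the $q$-CEP in $\Env[q]{Y}$ (indeed with constant $K$).

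The crux is the embedding assertion. For $q=1$ one could extend functionals and argue by duality, but for $q<1$ the dual need not recover the envelope quasi-norm, so a purely dual argument is unavailable. The device that overcomes this is to route the finite-rank BAP approximants of $X$ into the $q$-Banach space $\Env[q]{X}$ and extend them by the $q$-CEP before factoring through $\Env[q]{Y}$; verifying that these factorizations converge pointwise to $\Id_{\Env[q]{X}}$---which rests on the density of $E_{X,q}(X)$ and the uniform bound $KC$---is the step demanding the most care. Once the embedding and the closedness of $W$ are secured, the compact extension property follows from envelope functoriality and the compactness-preservation of Theorem~\ref{thm:compactEnv} with essentially no further obstruction.
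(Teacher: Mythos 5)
Your proof is correct and follows essentially the same route as the paper: both halves rest on extending compact (finite-rank) maps via the $q$-CEP with the uniform constant supplied by Theorem~\ref{thm:Lindens}, factoring the extensions through the envelope by the universal property together with Theorem~\ref{thm:compactEnv}, and chasing the resulting identities on the dense set $E_{X,q}(X)$. The only cosmetic difference is one of organization: the paper establishes the compact extension property first and then feeds the BAP approximants of $\Env[q]{X}$ (obtained from Lemma~\ref{lem:BAPEnv}) into it to produce the lower bound for $\Env[q]{J}$, whereas you build the approximate left inverses directly from the BAP approximants of $X$ composed with $E_{X,q}$, extended from $X$ to $Y$.
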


\begin{proof}
Use Theorem~\ref{thm:Lindens} to pick $K$ such that $X$ has the $q$-CEP in $Y$ with constant $K$. Let $Z$ be a $q$-Banach space and let $T\colon\Env[q]{X}\to Z$ be a compact operator. Since $T\circ E_{X,q}$ is also compact, the compact extension property and Theorem~\ref{thm:compactEnv} yield a compact operator $S\colon \Env[q]{Y}\to Z$ with $\Vert S \Vert \le K \Vert T\Vert$ and such that the diagram
\[
\xymatrix{Y \ar[r]^{E_{Y,q}} & \Env[q]{Y} \ar[dr]^{S} & \\
X \ar[r]_{E_{X,q}} \ar[u]^{J} & \Env[q]{X} \ar[r]_{T} & Z}
\]
commutes. Since $E_{X,q}(X)$ is dense in $\Env[q]{X}$, merging this commutative diagram with
\[
\xymatrix{Y \ar[r]^{E_{Y,q}} & \Env[q]{Y} \\
X \ar[r]_{E_{X,q}} \ar[u]^{J} & \Env[q]{X} \ar[u]_{\Env[q]{J}}}
\]
yields that the diagram
\[
\xymatrix{\Env[q]{Y} \ar[dr]^{S} & \\
\Env[q]{X} \ar[r]_{T} \ar[u]^{\Env[q]{J}} & Z}
\]
commutes. It remains to show that $\Env[q]{J}$ is an isomorphic embedding. To that end, use Lemma~\ref{lem:BAPEnv} to pick $C$ such that $\Env[q]{X}$ has that BAP with constant $C$. Let $x\in \Env[q]{X}$ and $\varepsilon>0$. There is a linear operator $T\colon \Env[q]{X}\to \Env[q]{X}$ with finite-dimensional range such that $\Vert x-T(x)\Vert \le \varepsilon$ and $\Vert T\Vert \le C$. Since $T$ is compact, there is $S\colon \Env[q]{Y}\to \Env[q]{X}$ such $S\circ \Env[q]{J}=T$ and $\Vert S\Vert \le C K$. We have
\[
\Vert x \Vert^q \le \Vert x-T(x)\Vert^q + \Vert S (\Env[q]{J}(x))\Vert^q
\le \varepsilon^q+ C^q K^q \Vert \Env[q]{J}(x)\Vert^q.
\]
Letting $\varepsilon$ tend to zero we obtain $\Vert x \Vert \le CK \Vert \Env[q]{J}(x)\Vert$.
\end{proof}

\begin{Theorem}\label{thm:EnvSLpIsSLq}
Let $0<p\le q \le 1$ and $X$ be a separable $\LL_p$-space with the BAP. Then $\Env[q]{X}$ is isomorphic to a locally complemented subspace of $\ell_q$ and has a Schauder basis.
\end{Theorem}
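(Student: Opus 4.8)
My plan is to transport the local complementation of $X$ through the $q$-Banach envelope functor by means of Theorem~\ref{lem:lcenv}, and then to recognise the resulting ambient envelope as (a subspace of) $\ell_q$. To begin, since $X$ is an $\LL_p$-space it is isomorphic to a locally complemented subspace of some $L_p(\mu)$, and separability of $X$ lets me shrink $\mu$ to a separable measure; so I may assume $X\subseteq L_p(\mu)$ with $L_p(\mu)$ separable and the inclusion $J$ locally complemented. Local complementation together with the BAP of $X$ gives that $X$ has the $q$-CEP in $L_p(\mu)$: any compact $T\colon X\to Z$ into a $q$-Banach space extends to a compact map on $L_p(\mu)$ as a limit of the operators $T\circ P_V$ along the finite-dimensional subspaces $V\subseteq L_p(\mu)$, where $P_V\colon V\to X$ are the local projections; the limit exists and is compact because $T(B_X)$ is relatively compact, and no convexity of the ambient space is needed. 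Theorem~\ref{lem:lcenv} then shows that $\Env[q]{J}\colon\Env[q]{X}\to\Env[q]{L_p(\mu)}$ is an isomorphic embedding whose range has the $q$-CEP in $\Env[q]{L_p(\mu)}$.

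The crux is to identify $\Env[q]{L_p(\mu)}$ when $p<q$. Decomposing $L_p(\mu)$ as the $p$-sum of its atomic part (a copy of $\ell_p$) and its atomless part (a copy of $L_p[0,1]$), I would establish the vanishing lemma that for $0<p<q\le1$ every bounded operator from $L_p[0,1]$ into a $q$-Banach space is zero: splitting $f$ into $n$ disjointly supported pieces of equal $p$-mass and using $q$-subadditivity bounds the image norm by a multiple of $n^{1-q/p}\Vert f\Vert_p\to0$. Hence $\Env[q]{L_p[0,1]}=0$ while $\Env[q]{\ell_p}=\ell_q$, so $\Env[q]{L_p(\mu)}$ is isometric to a subspace $\ell_q(\Gamma)$ of $\ell_q$. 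Thus $\Env[q]{X}$ is isomorphic to a subspace of $\ell_q$ with the $q$-CEP; as $\ell_q$ is $q$-Banach with the BAP and $\Env[q]{X}$ has the BAP by Lemma~\ref{lem:BAPEnv}, Theorem~\ref{thm:CEPvsLC} upgrades this to local complementation in $\ell_q$. In the diagonal case $p=q$ the envelope is inert, $\Env[q]{X}=X$, and the statement reduces to the fact that for $0<p<1$ a separable $\LL_p$-space with the BAP is locally complemented in $\ell_p$; here I would assemble the finite-dimensional factorizations through $\ell_p$ furnished by Lemma~\ref{lem:FDSLp} into a global isomorphic embedding $\Phi\colon X\to\ell_p$ with the $p$-CEP — the reconstruction being controlled because for $p<1$ the $\LL_p$ and BAP hypotheses exclude the approximation-property-free space $L_p[0,1]$ — and then invoke Theorem~\ref{thm:CEPvsLC}, now legitimate since $\ell_p$ is $q$-Banach for $q=p$.

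Finally, $\Env[q]{X}$ is a separable space with the BAP that is locally complemented in $\ell_q$, and it remains to manufacture a Schauder basis. For this I would produce a sequence $(P_m)_m$ of uniformly bounded, mutually commuting finite-rank projections with $\dim P_m(\Env[q]{X})=m$, $P_m\circ P_n=P_{\min\{m,n\}}$, and $\bigcup_m P_m(\Env[q]{X})$ dense, and then quote the construction recorded in the Terminology subsection. The uniformly bounded finite-rank approximants are supplied by the BAP, while the local $\ell_q$-structure (the uniformly complemented finite-dimensional subspaces inherited from $\ell_q$) is what lets me correct these approximants into a genuine nested, commuting chain whose ranks increase by one. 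I expect this last step — turning approximants into an honest monotone chain of commuting projections — to be the principal technical obstacle; by contrast the vanishing computation and the invocation of Theorem~\ref{lem:lcenv} should be comparatively routine.
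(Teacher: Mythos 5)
Your route to the local complementation statement in the strict case $p<q$ is essentially sound and genuinely different from the paper's. The paper's entire proof rests on citing Kalton's Theorem~6.4 from \cite{Kalton1984}: a separable $\LL_p$-space with the BAP is isomorphic to a locally complemented subspace $Y$ of $\ell_p$ \emph{and} $Y$ has a Schauder basis; from there one applies Theorem~\ref{thm:CEPvsLC} to get the CEP of $Y$ in $\ell_p$, Theorem~\ref{lem:lcenv} to push everything into $\Env[q]{\ell_p}=\ell_q$, observes that $\Env[q]{Y}$ inherits the basis of $Y$, and applies Theorem~\ref{thm:CEPvsLC} once more. You instead work inside $L_p(\mu)$ directly, verify the CEP by an ultrafilter limit of $T\circ P_V$ (correct, and indeed no convexity or ambient BAP is needed for that direction), and then compute $\Env[q]{L_p(\mu)}\simeq\ell_q$ via the vanishing of $\Env[q]{L_p[0,1]}$; the $n^{1-q/p}$ splitting argument is right. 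For $p<q$ this bypasses Kalton's Theorem~6.4 for the embedding, which is a real simplification worth noting. (One caveat: you cannot invoke Theorem~\ref{thm:CEPvsLC} with ambient $L_p(\mu)$ when $q>p$, since a $p$-Banach space is not $q$-Banach; your direct argument is what saves you there, so make it explicit rather than parenthetical.)

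The genuine gaps are the diagonal case $p=q$ and the Schauder basis, and they are the same gap. In both places your plan is to ``assemble'' the finite-dimensional factorizations of Lemma~\ref{lem:FDSLp} (resp.\ the BAP approximants together with the local $\ell_q$-structure) into a global isomorphic embedding into $\ell_p$, resp.\ into a nested commuting chain of finite-rank projections with one-dimensional increments. That assembly is precisely the content of Kalton's Theorem~6.4 --- the quasi-Banach analogue of the Johnson--Rosenthal--Zippin/Pe\l czy\'nski machinery --- and it is the single nontrivial external input of the paper's proof; it cannot be dismissed as a correction step to be supplied later, and your own text flags it as ``the principal technical obstacle'' without resolving it. Once that theorem is available, the basis of $\Env[q]{X}$ is immediate (the partial-sum projections of a basis of $Y\subset\ell_p$ pass to $\Env[q]{Y}$ with the same norms, and the union of their ranges is dense), and the case $p=q$ is literally the statement of Kalton's theorem, so your reduction there is circular as it stands. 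To close the proof you should either cite \cite{Kalton1984}*{Theorem 6.4} as the paper does, or actually carry out the perturbation/gluing construction you allude to.
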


\begin{proof}
By \cite{Kalton1984}*{Theorem 6.4}, $X$ is isomorphic to a locally complemented subspace $Y$ of $\ell_p$, and $Y$ has a Schauder basis. By Theorem~\ref{thm:CEPvsLC}, $Y$ has CEP in $\ell_p$. Hence, by Theorem~\ref{lem:lcenv}, $\Env[q]{X}$ is isomorphic to a subspace $Z$ of $\ell_q$ and has the $q$-CEP in $\ell_q$. As it is clear that $\Env[q]{Y}$ (and so  $\Env[q]{X}$) has a Schauder basis, applying once again Theorem~\ref{thm:CEPvsLC} completes the proof.
\end{proof}

The following straightforward consequence of Theorem~\ref{thm:EnvSLpIsSLq} partially solves \cite{AAW2020}*{Question 4.18}.
\begin{Corollary}\label{cor:EnvLpisLq}
Let $0<p\le q \le 1$ and $X$ be a separable $\LL_p$-space with the BAP. Then $\Env[q]{X}$ is a $\LL_q$-space.
\end{Corollary}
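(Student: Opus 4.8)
The plan is to read off this statement almost directly from Theorem~\ref{thm:EnvSLpIsSLq}, whose conclusion is already tailored to match the definition of a $\LL_q$-space. Recall that, by definition, a quasi-Banach space is a $\LL_q$-space precisely when it is isomorphic to a locally complemented subspace of $L_q(\mu)$ for \emph{some} measure $\mu$. The only thing to supply, then, is the observation that $\ell_q$ is itself a concrete instance of such an $L_q(\mu)$.

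Concretely, I would argue as follows. First I would invoke Theorem~\ref{thm:EnvSLpIsSLq}: since $0<p\le q\le 1$ and $X$ is a separable $\LL_p$-space with the BAP, its $q$-Banach envelope $\Env[q]{X}$ is isomorphic to a locally complemented subspace of $\ell_q$. Next I would note that $\ell_q=L_q(\mu)$, where $\mu$ is the counting measure on $\Nat$. Since this is a genuine identification of quasi-Banach spaces (not merely an isomorphism), the notion of local complementation transfers verbatim: a subspace of $\ell_q$ is locally $K$-complemented in $\ell_q$ if and only if the same subspace, viewed inside $L_q(\mu)$, is locally $K$-complemented there, because the ambient space, its finite-dimensional subspaces, and the approximating projections $P\colon V\to X$ in the definition of local complementation are literally the same objects. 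Composing the isomorphism furnished by Theorem~\ref{thm:EnvSLpIsSLq} with this identification exhibits $\Env[q]{X}$ as isomorphic to a locally complemented subspace of $L_q(\mu)$, which is exactly the requirement for $\Env[q]{X}$ to be a $\LL_q$-space.

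I do not expect any genuine obstacle in this argument: all of the substantive content—the fact that $\Env[q]{J}$ is an isomorphic embedding, that the image carries the $q$-CEP, and that the relevant local complementation holds—is already packaged inside Theorem~\ref{thm:EnvSLpIsSLq} (which in turn rests on Kalton's \cite{Kalton1984}*{Theorems 6.1 and 6.4}, on Theorem~\ref{thm:CEPvsLC}, and on the envelope results Theorem~\ref{thm:compactEnv} and Theorem~\ref{lem:lcenv}). The corollary is purely a matter of matching that conclusion against the definition of a $\LL_q$-space, which is why the text describes it as a straightforward consequence. The Schauder basis assertion of Theorem~\ref{thm:EnvSLpIsSLq} is not even needed here and can simply be discarded for the purpose of this corollary.
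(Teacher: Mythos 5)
Your argument is correct and coincides with the paper's intent: the paper gives no separate proof, stating the corollary as a ``straightforward consequence'' of Theorem~\ref{thm:EnvSLpIsSLq}, and the only content to supply is exactly what you supply, namely that $\ell_q=L_q(\mu)$ for $\mu$ the counting measure on $\Nat$, so a locally complemented subspace of $\ell_q$ witnesses the definition of a $\LL_q$-space. Nothing further is needed.
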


We are ready to prove the main results of this section.
\begin{Theorem}\label{thm:FpMNotSLp}Let $\MM$ be a $p$-metric space, $0<p<1$. Suppose that there is a constant $C$ such that $\Nat_n$ is a Lipschitz retract of $\MM$ with constant $C$ for all $n\in\Nat$. Then $\F_p(\MM)$ is not an $\LL_p$-space. In particular, $\F_p(\MM)\not\simeq\ell_p$.
\end{Theorem}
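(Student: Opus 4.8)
The plan is to argue by contradiction: the uniform Lipschitz retractions force $\F_p(\MM)$ to factor through $\ell_p$ on arbitrarily long chains, and I would then exhibit an incompatibility between two opposite behaviours of the canonical molecules $u_k:=\delta_k-\delta_{k-1}\in\F_p(\Nat_n)$ ($1\le k\le n$, so $\|u_k\|=1$).

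\emph{Step 1 (reduction to a uniform $\ell_p$-factorization).} Suppose $\F_p(\MM)$ is an $\LL_p$-space; since $\ell_p$ is itself an $\LL_p$-space this also settles the ``in particular'' clause. A Lipschitz retraction of $\MM$ onto $\Nat_n$ with constant $C$ linearizes, by the functoriality of $\F_p$, to maps $L_\iota,L_r$ with $\|L_\iota\|\le1$, $\|L_r\|\le C$ and $L_r\circ L_\iota=\Id$; hence $\F_p(\Nat_n)$ is $C$-complemented in $\F_p(\MM)$, with a constant \emph{independent of $n$}. Applying Lemma~\ref{lem:FDSLp} with $X=\F_p(\MM)$ and $V=Y=\F_p(\Nat_n)$, and then performing a routine perturbation (take the error $\varepsilon<1$ and invert $T\circ S$ on the finite-dimensional space $\F_p(\Nat_n)$), I obtain a constant $K=K(\MM,p)$, again independent of $n$, and vectors $\xi_k:=S(u_k)\in\ell_p$ with
\[
K^{-1}\Big\|\sum_k a_k u_k\Big\|_{\F_p}\le \Big\|\sum_k a_k \xi_k\Big\|_{p}\le K\Big\|\sum_k a_k u_k\Big\|_{\F_p}
\]
for all scalars. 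That is, $(u_k)_{k=1}^n$ embeds $K$-isomorphically into $\ell_p$, uniformly in $n$.

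\emph{Step 2 (two conflicting estimates and the contradiction).} I would play off two extreme combinations. The \emph{coherent} sums telescope to a single long molecule, $\sum_{k=1}^n u_k=\delta_n-\delta_0$, of norm equal to the length of the chain, $\big\|\sum_{k=1}^n u_k\big\|_{\F_p}=n$; while the \emph{oscillating} sums are incompressible,
\[
\mathbb{E}_{\epsilon}\Big\|\sum_{k=1}^n \epsilon_k u_k\Big\|_{\F_p}^{\,p}\ \gtrsim\ n ,
\]
i.e. a typical signed sum attains the maximal order $n^{1/p}$ (already $\|u_1-u_2\|=2^{1/p}$ shows the oscillating direction is $\ell_p$-like, whereas $\|u_1+u_2\|=2$ shows the coherent direction is $\ell_1$-like). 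Transporting both to $\ell_p$ via Step~1 gives, writing $\xi_k=(\xi_k(m))_m$, the bounds $\big\|\sum_k \xi_k\big\|_{p}\le Kn$, $\sum_k\|\xi_k\|_p^p\le K^p n$, and $\mathbb{E}_{\epsilon}\big\|\sum_k\epsilon_k\xi_k\big\|_p^{\,p}\gtrsim n$. The two-sided Khintchine inequality in $L_p$ converts the averaged oscillating sum into a square function,
\[
\mathbb{E}_{\epsilon}\Big\|\sum_k\epsilon_k\xi_k\Big\|_p^{\,p}\ \approx\ \sum_m\Big(\sum_k \xi_k(m)^2\Big)^{p/2}\ \le\ \sum_m\sum_k|\xi_k(m)|^p=\sum_k\|\xi_k\|_p^p ,
\]
the last step being the coordinatewise estimate $\|\cdot\|_2\le\|\cdot\|_p$. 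Since both ends are of order $n$, this is nearly saturated, which forces the $(\xi_k)$ to be \emph{almost disjointly supported}. Almost disjoint supports preclude cancellation in the coherent sum, so $\big\|\sum_k\xi_k\big\|_p^p\gtrsim\sum_k\|\xi_k\|_p^p\gtrsim n$, i.e. $\big\|\sum_k\xi_k\big\|_p\gtrsim n^{1/p}$. Comparing with $\big\|\sum_k\xi_k\big\|_p\le Kn$ yields $n^{1/p-1}\lesssim K$, impossible for large $n$ since $p<1$. This contradiction shows $\F_p(\MM)$ is not an $\LL_p$-space, and in particular $\F_p(\MM)\not\simeq\ell_p$.

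\emph{Main obstacle.} The delicate point is the oscillating lower bound $\mathbb{E}_\epsilon\|\sum_k\epsilon_k u_k\|_{\F_p}^p\gtrsim n$, equivalently that signed charge patterns on the line cannot be represented by molecules at cost essentially below $n^{1/p}$. The standard device for lower bounds in a free $p$-space — testing against a Lipschitz map into a $p$-convex target — only reproduces the Kantorovich/$\F_1$ bound $n$: any $\ell_p$-valued Lipschitz map sending the increments $u_k$ to nearly orthogonal directions distorts the diameter of $\Nat_n$ by the factor $n^{1/p-1}$, erasing the gain. One must therefore estimate the concave transportation functional defining $\|\cdot\|_{\F_p}$ intrinsically, proving incompressibility of oscillating patterns; this is cleanest through an explicit wavelet-type equivalent norm on $\F_p(\Nat_n)$ of the sort underlying the Schauder bases in \ref{two:basis}. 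Granting this estimate, together with the routine near-disjointness lemma invoked above, the argument closes.
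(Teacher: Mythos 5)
Your Step~1 is sound and coincides with how the paper's proof begins: the uniform Lipschitz retractions give uniform complementation of $\F_p(\Nat_n)$ in $\F_p(\MM)$ (via \cite{AACD2018}*{Lemma 4.19}), and Lemma~\ref{lem:FDSLp} then yields factorizations through $\ell_p$ with constants independent of $n$. The argument breaks down in Step~2, in two distinct places. First, the pivotal estimate $\mathbb{E}_{\epsilon}\bigl\Vert\sum_{k}\epsilon_k u_k\bigr\Vert_{\F_p}^{p}\gtrsim n$ is asserted, not proved, and you concede as much. For $p<1$ there is no duality with Lipschitz functions: the only general lower bounds for $\Vert\cdot\Vert_{\F_p}$ come from bounded operators into $p$-Banach targets, and, as you yourself note, these recover only the envelope ($\F_1$) bound of order $n$, a factor $n^{1/p-1}$ short of what you need. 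No ``wavelet-type equivalent norm'' on $\F_p(\Nat_n)$ with uniform constants is available; producing one is essentially the hard content of understanding $\F_p(\Nat)$, so this is not a routine lemma to be granted.

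Second, and more fatally, even if that lower bound were granted, the three constraints you transport into $\ell_p$ --- $\Vert\xi_k\Vert_p\le K$, $\bigl\Vert\sum_k\xi_k\bigr\Vert_p\le Kn$, and $\mathbb{E}_{\epsilon}\bigl\Vert\sum_k\epsilon_k\xi_k\bigr\Vert_p^{p}\gtrsim n$ --- are simultaneously satisfiable in $\ell_p$, so no contradiction can follow from them alone. Take $\xi_k=e_k-e_{k+1}$: then $\Vert\xi_k\Vert_p=2^{1/p}$, the coherent sum telescopes to $e_1-e_{n+1}$ of norm $2^{1/p}\ll n$, while $\mathbb{E}_{\epsilon}\bigl\Vert\sum_k\epsilon_k\xi_k\bigr\Vert_p^{p}\sim n$; moreover this system saturates the square-function comparison $\sum_m\bigl(\sum_k\xi_k(m)^2\bigr)^{p/2}\le\sum_k\Vert\xi_k\Vert_p^p$ up to a constant factor. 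Hence constant-factor saturation of Khintchine does \emph{not} force the supports to be disjoint enough to preclude cancellation, and your step ``almost disjoint supports, hence $\bigl\Vert\sum_k\xi_k\bigr\Vert_p^p\gtrsim\sum_k\Vert\xi_k\Vert_p^p$'' is false. The paper avoids estimating $\F_p$-norms from below altogether: after your Step~1 it uses the rescaled chains $\Dy_k=2^{-k}\Int\cap[-k,k]$ to produce uniformly complemented finite-dimensional subspaces $V_k$ whose union is dense in $\F_p(\Rea)$, applies Lemma~\ref{lem:slpdf} to conclude that $\F_p(\Rea)$ would be an $\LL_p$-space, and then passes to Banach envelopes: by Theorem~\ref{thm:EnvSLpIsSLq} and the fact that the envelope of $\F_p(\Rea)$ is $\F(\Rea)\simeq L_1(\Rea)$, the space $L_1$ would embed into $\ell_1$, which is absurd since $\ell_2\subset L_1$. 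Some such detour through the envelope (or another genuinely new lower-bound mechanism) is needed to close your argument.
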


\begin{Theorem}\label{thm:FpMNotSLpB}
Let $\MM$ be a metric space. Suppose that there is a constant $K$ such that, for all $n\in\Nat$, $\Nat_n$ Lipschitz-isomorphically embeds in $\MM$ with distortion at most $K$. Then $\F_p(\MM)$ is not an $\LL_p$-space. In particular, $\F_p(\MM)\not\simeq\ell_p$.
\end{Theorem}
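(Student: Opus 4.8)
The plan is to argue by contradiction: assuming that $\F_p(\MM)$ is an $\LL_p$-space (recall $0<p<1$, exactly as in Theorem~\ref{thm:FpMNotSLp}), I will realize the finite-dimensional spaces $\F_p(\Nat_n)$ as subspaces of $\ell_p$ with distortion bounded uniformly in $n$, and then show that this is impossible. The argument thus splits into a ``soft'' transfer part, which channels the hypothesis into a uniform embedding of $\F_p(\Nat_n)$ into $\ell_p$, and a ``hard'' geometric part, which rules such an embedding out. The reason for isolating the spaces $\F_p(\Nat_n)$ is that these are the free $p$-spaces over initial segments of the half-line, and it is precisely their geometry for $p<1$ that is incompatible with $\ell_p$.

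First I would produce a uniform isomorphic embedding of $\F_p(\Nat_n)$ into $\F_p(\MM)$. Let $\phi_n\colon\Nat_n\to\MM$ be a Lipschitz-isomorphic embedding with distortion at most $K$ and set $\A_n=\phi_n(\Nat_n)\subset\MM$. Since $\A_n$ is a subset of $\MM$, the canonical map $\F_p(\A_n)\to\F_p(\MM)$ induced by the inclusion is an isometric embedding, while the functoriality of $\F_p$ turns the bi-Lipschitz bijection $\phi_n\colon\Nat_n\to\A_n$ into a linear isomorphism $\F_p(\Nat_n)\to\F_p(\A_n)$ whose norm and the norm of whose inverse are controlled by $K$. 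Composing, $\F_p(\Nat_n)$ is $K'$-isomorphic to a subspace of $\F_p(\MM)$ with $K'=K'(K,p)$ independent of $n$. (If one is uneasy about the isometric embedding of free $p$-spaces over subsets, the lower bound can be obtained directly by extending $\phi_n^{-1}\colon\A_n\to\Rea$ to a Lipschitz map $\MM\to\Rea$ via the McShane theorem and linearizing.)

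Next I would feed this into the local theory of $\LL_p$-spaces. The local structure of $\LL_p$-spaces, namely \cite{Kalton1984}*{Theorem 6.1}, which is already invoked in the proof of Lemma~\ref{lem:FDSLp}, applied to the subspace $V_n\cong\F_p(\Nat_n)$ of $\F_p(\MM)$ provides, for any prescribed tolerance, operators $S_n\colon V_n\to\ell_p$ and $T_n\colon\ell_p\to\F_p(\MM)$ with $\Vert S_n\Vert\,\Vert T_n\Vert\le K_0$ (with $K_0$ depending only on $\F_p(\MM)$) and $\Vert T_nS_n-\Id_{V_n}\Vert$ as small as we wish. Choosing the tolerance comparable to $1/K'$, the relation $T_nS_n\approx\Id$ forces $S_n$ to be bounded below, so that $S_n$ embeds $\F_p(\Nat_n)$ into $\ell_p$ with distortion at most a uniform constant $\lesssim K'K_0$. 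I stress that at this stage I use only an \emph{embedding} of $\F_p(\Nat_n)$ into $\F_p(\MM)$, not a complemented one; this is the essential difference with Theorem~\ref{thm:FpMNotSLp}, where the Lipschitz-retract hypothesis yields a uniformly \emph{complemented} copy and one instead invokes Lemma~\ref{lem:FDSLp}.

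The heart of the matter, and the step I expect to be the main obstacle, is to show that $\F_p(\Nat_n)$ does \emph{not} embed into $\ell_p$ with distortion bounded independently of $n$. Here I would work with the concrete model: in the edge basis $e_k=\delta(k)-\delta(k-1)$, the space $\F_p(\Nat_n)$ is $\Rea^n$ carrying the $p$-norm whose unit ball is $\co_p\{\tfrac{1}{|I|}\sum_{k\in I}e_k\}$ over all discrete intervals $I$, so that $\Vert\sum_{k\in I}e_k\Vert=|I|$ for every such $I$, whereas an embedding would have to reproduce all these norms through vectors $v_k=S_n(e_k)\in\ell_p$. The tension to exploit is that matching the interval norms forces the $v_k$ to add up coherently over \emph{every} block (block sums must grow linearly in the block length, not like its $1/p$-th power), while reproducing the norms of oscillating combinations, e.g. $\Vert\sum_k(-1)^ke_k\Vert\asymp n^{1/p}$, forces them to behave independently; for $p<1$ these two requirements cannot be reconciled inside $\ell_p$ with a bound uniform in $n$. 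Quantifying this incompatibility, for instance through a dyadic splitting of $\Nat_n$ or by a direct lower estimate on the Banach--Mazur distance from $\F_p(\Nat_n)$ to its $n$-dimensional subspaces of $\ell_p$, is exactly the $p<1$ phenomenon behind the theorem; note that the argument must collapse at $p=1$, where $\F_1(\Nat_n)=\ell_1^n$ embeds isometrically into $\ell_1$. This non-embedding estimate is where the genuine work lies, and I would expect it to rest on the same geometric analysis of $\F_p(\Nat_n)$ that underlies Theorem~\ref{thm:FpMNotSLp}. Once it is in hand it contradicts the uniform embedding produced above, and the final assertion $\F_p(\MM)\not\simeq\ell_p$ follows because $\ell_p$ is itself an $\LL_p$-space.
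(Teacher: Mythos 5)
Your proposal has two genuine gaps, one in the ``soft'' transfer step and one in the ``hard'' step, and the second one is fatal to the strategy rather than just unfinished. For the transfer: when $p<1$ the canonical map $\F_p(\A_n)\to\F_p(\MM)$ induced by the inclusion of a subset is \emph{not} an isometric embedding, and it is not even known to be an isomorphic embedding in general --- this is one of the central difficulties of the theory of free $p$-spaces. The paper circumvents it by observing that the sets $\Nat_n$ are uniformly doubling and invoking \cite{AACD2020}*{Theorem 5.1}, which upgrades a uniformly bi-Lipschitz copy of a doubling space to a \emph{uniformly complemented} copy of its free $p$-space inside $\F_p(\MM)$. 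Your McShane fallback does not close the gap: extending $\phi_n^{-1}$ to $\MM$ and linearizing only bounds $\Vert\cdot\Vert_{\F_p(\MM)}$ from below by the norm of the canonical image in $\F_p(\Rea)$, which a priori need not dominate $\Vert\cdot\Vert_{\F_p(\Nat_n)}$ either (and there is no Lipschitz retraction of $\Rea$ onto the disconnected set $\Nat_n$ to push back with).

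The more serious problem is the step you defer. By discarding the operators $T_n$ you reduce the theorem to the assertion that $\F_p(\Nat_n)$ does not embed into $\ell_p$ with distortion bounded in $n$. But the spaces $\F_p(\Nat_n)$ are, uniformly in $n$, isomorphic to subspaces of $\F_p(\Rea)$ (via the rescaled copies $\Dy_k$ used in the paper), so your assertion would immediately give a negative answer to the paper's own open Question 4.1, namely whether $\F_p(\Rea)$ embeds into $\ell_p$. You have therefore reduced the theorem to something at least as hard as an open problem, and possibly false; your supporting heuristic also rests on the unproved claim $\Vert\sum_k(-1)^k e_k\Vert\asymp n^{1/p}$ (only the upper bound is clear). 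The point you lose by keeping only $S_n$ is that being an $\LL_p$-space is a \emph{local complementation} property, not a local embeddability property. The paper keeps both operators: using the uniformly complemented copies of $\F_p(\Dy_k)$, whose union is dense in $\F_p(\Rea)$, Lemma~\ref{lem:FDSLp} and Lemma~\ref{lem:slpdf} show that if $\F_p(\MM)$ were an $\LL_p$-space then so would be $\F_p(\Rea)$; the contradiction is then obtained not by finite-dimensional geometry but by passing to Banach envelopes, where Theorem~\ref{thm:EnvSLpIsSLq} forces the envelope $\F(\Rea)\cong L_1(\Rea)$ to embed into $\ell_1$, contradicting $\ell_2\subset L_1$. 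Note finally why the rescaling to $\Dy_k$ (rather than working with $\Nat$ itself) is essential: the Banach envelope of $\F_p(\Nat)$ is $\ell_1$, which embeds into $\ell_1$ perfectly well, so no contradiction can be extracted at that level.
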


\begin{proof}[Proof of Theorems~\ref{thm:FpMNotSLp} and \ref{thm:FpMNotSLpB}]
Since $\Rea$ is a doubling metric space, there is $D\ge 1$ such that every subset of $\Rea$, in particular $\Nat_n$ for all $n\in\Nat$, is a doubling metric space with doubling constant $D$. Thus, under the assumptions in Theorem~\ref{thm:FpMNotSLpB}, applying \cite{AACD2020}*{Theorem 5.1} yields a constant $K$ such that $\F_p(\Nat_n)$ is $K$-complemented in $\F_p(\MM)$ for all $n\in\Nat$.
By \cite{AACD2018}*{Lemma 4.19}, this holds under the assumptions in Theorem~\ref{thm:FpMNotSLp} as well. Since
\[
\Dy_k=\{ x\in [-k,k] \colon 2^k x\in\Int\}
\]
is a doubling metric space with constant $D$ for all $k\in\Nat$, applying again \cite{AACD2020}*{Theorem 5.1} yields a constant $C$ such that the linearization of the inclusion of $\Dy_k$ into $\Rea$ is a $C$-isomorphic embedding for all $k\in\Nat$. Taking into account that $\Dy_k$ is Lipschitz isomorphic to $\Nat_{k 2^{k+1}}$ with distortion $1$ we infer the existence of a constant $K_1$ such that the finite-dimensional subspace
\[
V_k:=\spn( \delta_\Rea(x) \colon x \in \Dy_k)
\]
of $\F_p(\Rea)$ is $K_1$-complemented in $\F_p(\MM)$ for all $k\in\Nat$. 

Suppose by contradiction that $\F_p(\MM)$ is an $\LL_p$-space. Then, by Lemma~\ref{lem:FDSLp}, there is a constant $K_2$ such that for all $k\in\Nat$ and $\varepsilon>0$ there are
linear bounded maps $S \colon V_k\to \ell_p$ and $T \colon\ell_p\to V_k\subset\F_p(\Rea)$ with $\Vert S \Vert \, \Vert T \Vert \le K_2$ and $\Vert T\circ S - \Id_{V_k} \Vert \le \varepsilon$.
Since the set $\Dy$ consisting of all dyadic rationals is dense in $\Rea$,
\[
\cup_{k=1}^\infty V_k=\spn(\delta_\Rea(x) \colon x\in\Dy)
\]
is a dense subspace of $\F_p(\Rea)$. Applying Lemma~\ref{lem:slpdf} yields that $\F_p(\Rea)$ is an $\LL_p$-space. Since $\F_p(\Rea)$ has the BAP (see \ref{three:piproperty}) combining Theorem~\ref{thm:EnvSLpIsSLq} with  \cite{AACD2018}*{Proposition 4.20} yields that $\F(\Rea)$ is isomorphic to a subspace of $\ell_1$. Using that $\F(\Rea)$ is isometric to $L_1(\Rea)$ and that $\ell_2$ is a subspace of $L_1(\Rea)$, we obtain that $\ell_2$ is isomorphic to a subspace of $\ell_1$, an absurdity.

For the last part of the statements, we just need to note that $\ell_p$ is an $\LL_p$-space.
\end{proof}

\begin{Corollary}\label{thm:notEllp}
Let $\MM$ be a metric space containing a subset which is Lipchitz isomorphic either to $[0,1]$ or to $\Nat$. Then $\F_p(\MM)$ is not a $\LL_p$-space for any $0<p<1$.
\end{Corollary}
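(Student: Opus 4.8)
The plan is to deduce the corollary directly from Theorem~\ref{thm:FpMNotSLpB}, whose sole hypothesis is the existence of a single constant $K$ for which $\Nat_n$ Lipschitz-isomorphically embeds in $\MM$ with distortion at most $K$ for \emph{every} $n\in\Nat$. Thus the entire task reduces to manufacturing such embeddings, uniformly in $n$, out of the assumption that $\MM$ contains a subset Lipschitz isomorphic either to $[0,1]$ or to $\Nat$.

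Consider first the case where $\MM$ contains a subset $\NN$ that is Lipschitz isomorphic to $\Nat$ with some distortion $K$. Since $\Nat_n\subset\Nat$ carries the restricted metric, I would simply restrict the ambient Lipschitz isomorphism $\Nat\to\NN\subset\MM$ to $\Nat_n$; this immediately yields a Lipschitz-isomorphic embedding of $\Nat_n$ into $\MM$ of distortion at most $K$, and the constant is the same for all $n$. Hence Theorem~\ref{thm:FpMNotSLpB} applies verbatim.

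The case where $\MM$ contains a subset $I$ Lipschitz isomorphic to $[0,1]$ with distortion $K$ requires one intermediate step: first embed $\Nat_n$ into $[0,1]$, then push forward into $\MM$. The natural choice is the map $\Nat_n\to[0,1]$, $k\mapsto k/n$, whose image is the equally spaced grid $\{k/n\colon 0\le k\le n\}$. This map multiplies every distance by the fixed factor $1/n$; since distortion is invariant under rescaling, it is a Lipschitz isomorphism onto its range of distortion exactly $1$, independently of $n$. Composing with the Lipschitz isomorphism $[0,1]\to I\subset\MM$ then produces an embedding of $\Nat_n$ into $\MM$ of distortion at most $K$, again uniformly in $n$, so Theorem~\ref{thm:FpMNotSLpB} applies once more. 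In either case $\F_p(\MM)$ fails to be an $\LL_p$-space for every $0<p<1$.

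The only point that demands any care — and hence the closest thing to an obstacle — is the \emph{uniformity} of the distortion bound across all $n$: Theorem~\ref{thm:FpMNotSLpB} would be useless if the embedding constants blew up with $n$. This is precisely why it is essential to phrase everything in terms of distortion rather than a raw Lipschitz constant: the scaling $k\mapsto k/n$ has Lipschitz constant $1/n$ and inverse Lipschitz constant $n$, yet distortion $1$, and it is this scale-invariance that keeps the constant bounded independently of $n$.
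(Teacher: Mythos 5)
Your proposal is correct and follows the paper's argument exactly: the paper's proof also reduces to Theorem~\ref{thm:FpMNotSLpB} by noting that $\{k/n \colon k\in\Nat_n\}\subset[0,1]$ is Lipschitz isomorphic to $\Nat_n$ with distortion $1$ (with the $\Nat$ case handled by the even more immediate restriction argument). Your added emphasis on the scale-invariance of distortion, which keeps the constant uniform in $n$, is precisely the point the paper leaves implicit.
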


\begin{proof}
Just notice that $\{ k/n \colon k \in\Nat_n\}\subset[0,1]$ is Lipschitz isomorphic to $\Nat_n$ with distortion $1$, and apply Theorem~\ref{thm:FpMNotSLpB}.
\end{proof}

\begin{Corollary}
Let $X$ be a $p$-Banach space ($0<p<1$) with nontrivial dual. Then $\F_p(X)$ is not a $\LL_p$-space.
\end{Corollary}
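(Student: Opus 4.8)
The plan is to reduce the statement to the one-dimensional case: I will exhibit $\F_p(\Rea)$ as a complemented subspace of $\F_p(X)$ and then combine the (already established) fact that $\F_p(\Rea)$ is not an $\LL_p$-space with the stability of the $\LL_p$ property under passing to complemented subspaces.

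First I would use the nontriviality of the dual. Since $X^\ast\neq\{0\}$, choose a nonzero $f\in X^\ast$ and, after rescaling, a vector $x_0\in X$ with $f(x_0)=1$. Regard $X$ as a pointed $p$-metric space with base point $0$ and $p$-metric $\rho(x,y)=\Vert x-y\Vert$, and regard $\Rea$ as a pointed metric space. Define $\iota\colon\Rea\to X$ by $\iota(t)=t\,x_0$. Then $\rho(\iota(s),\iota(t))=|s-t|\,\Vert x_0\Vert$, so $\iota$ is Lipschitz with $\Lip(\iota)=\Vert x_0\Vert$, while $f$ is Lipschitz from $(X,\rho)$ to $\Rea$ with $\Lip(f)\le\Vert f\Vert_{X^\ast}$, because $|f(x)-f(y)|=|f(x-y)|\le\Vert f\Vert_{X^\ast}\,\rho(x,y)$. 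Both maps fix the base point, and $f\circ\iota=\Id_\Rea$.

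Next I would linearize. By the universal property recorded in the introduction, $\iota$ and $f$ induce bounded linear maps $L_\iota\colon\F_p(\Rea)\to\F_p(X)$ and $L_f\colon\F_p(X)\to\F_p(\Rea)$ with $\Vert L_\iota\Vert\le\Lip(\iota)$ and $\Vert L_f\Vert\le\Lip(f)$, and by functoriality $L_f\circ L_\iota=L_{f\circ\iota}=L_{\Id_\Rea}=\Id_{\F_p(\Rea)}$. Hence $\F_p(\Rea)$ is complemented in $\F_p(X)$. Since $\Nat_n=\{0,1,\dots,n\}$ embeds isometrically in $\Rea$ for every $n$ (equivalently, $\Rea$ contains a subset Lipschitz isomorphic to $[0,1]$), Corollary~\ref{thm:notEllp} (or Theorem~\ref{thm:FpMNotSLpB} applied to $\MM=\Rea$) shows that $\F_p(\Rea)$ is not an $\LL_p$-space. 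If $\F_p(X)$ were an $\LL_p$-space, its complemented subspace $\F_p(\Rea)$ would also be an $\LL_p$-space, as the $\LL_p$ property passes to complemented subspaces (\cite{Kalton1984}*{Proposition 3.3}); this contradicts the previous sentence, so $\F_p(X)$ is not an $\LL_p$-space.

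The individual steps are routine once the setup is fixed, and the point that requires the most care is conceptual rather than computational: when $p<1$ the space $X$ is only a $p$-metric space and is connected, so one cannot feed $X$ directly into Theorems~\ref{thm:FpMNotSLp} or \ref{thm:FpMNotSLpB}. Indeed, a Lipschitz map from the connected space $X$ into the discrete space $\Nat_n$ is constant, so $\Nat_n$ is never a Lipschitz retract of $X$. The main obstacle is therefore to route the argument through the connected metric space $\Rea$, which does arise as a genuine Lipschitz retract of $X$ via the functional $f$, and then to transfer non-$\LL_p$-ness back through the complementation; the remaining care is the base-point bookkeeping and functoriality of the linearization, which are standard.
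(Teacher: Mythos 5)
Your proof is correct and follows essentially the same route as the paper: both establish that $\F_p(\Rea)$ fails to be an $\LL_p$-space via Corollary~\ref{thm:notEllp}, realize $\F_p(\Rea)$ as a complemented subspace of $\F_p(X)$ using a norm-one-type functional paired with the line it normalizes (the paper cites \cite{AACD2018}*{Lemma 4.19} where you unwind the linearization by hand), and conclude by the stability of the $\LL_p$ property under complemented subspaces. Your remark on why one must route through $\Rea$ rather than feed $X$ directly into Theorems~\ref{thm:FpMNotSLp} or \ref{thm:FpMNotSLpB} is a sensible clarification but does not change the argument.
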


\begin{proof}By Corollary~\ref{thm:notEllp}, $\F_p(\Rea)$ is not a $\LL_p$-space.  Since, by assumption, $\Rea$ is a complemented subspace of $X$, by \cite{AACD2018}*{Lemma 4.19}  it follows that $\F_p(\Rea)$ is a complemented subspace of 
$\F_p(X)$. Consequently, $\F_p(X)$ is a not an $\LL_p$-space either.
\end{proof}

\section{Schauder bases in $\F_p([0,1]^d)$ and $\F_p(\Rea^d)$}\label{Sect:Basis}
\noindent
The basic idea for building Schauder bases for $\F_p([0,1]^d)$ and $\F_p(\Rea^d)$ comes, on one hand, from \cite{LP2013}, where the authors present a canonical way of extending linearly Lipschitz functions on $d$-dimensional hypercubes, and on the other hand from \cite{AACD2019}, where a method for building linear projections on $\F_p([0,1])$ is given.

Fix $d\in\Nat$. Given $R>0$ and $w\in\Int^d$, we denote by $Q^d_{w,R}$, the hypercube having edge-length $R$ and with vertices in the points
\[
V^d_{w,R}=\{ R w + R\varepsilon \colon \varepsilon\in\{0,1\}^d\},
\]
That is, if $w=(w_i)_{i=1}^d$,
\[
Q^d_{w,R}=\co[V^d_{w,R}]=\prod_{i=1}^d [R w_i, Rw_i+R].
\]
For $R>0$ fixed, the set of hypercubes
\[
\QQ^d_R= \{ Q^d_{w,R} \colon w\in\Int^d\}
\]
tessellates the space $\Rea^d$. If $Q\in\QQ^d_R$, we denote by $\VV(Q)$ its set of vertices, that is $\VV(Q^d_{w,R})=V^d_{w,R}$ for every $w\in\Int^d$. We have
\[
\bigcup_{Q\in \QQ^d_R} \VV(Q)=\VV^d_R:=\{ R w \colon w\in\Int^d\}.
\]

We shall define a fuzzy pull back of $\Rea^d$ into the set of vertices $\VV^d_R$. Given $x\in[0,1]$ and $w\in\Int$ we set
\[
x^{(w)}
=\begin{cases} x & \text{ if } w=1, \\ 1-x & \text{ if } w=0, \\ 0 & \text{ if } w\in\Int\setminus\{0,1\}.\end{cases}
\]
Given $x=(x_i)_{i=1}^d\in[0,1]^d$ and $w=(w_i)_{i=1}^d\in\Int^d$ we put
\[
x^{(w)}=\prod_{i=1}^d x_i^{(w_i)}.
\]

\begin{Lemma}\label{lem:FPB}
Let $d\in\Nat$ and $R>0$. There is a mapping
\[
\Lambda=(\Lambda(v,\cdot))_{v\in\VV^d_R} \colon \Rea^d \to [0,1]^{\VV^d_R}
\]
such that $\Lambda(Ru,Rw+Rx) = x^{(u-w)}$ for every $x\in [0,1]^d$ and $u,w\in\Int^d$.
\end{Lemma}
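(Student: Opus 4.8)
The plan is to write down an explicit global formula for $\Lambda$ as a tensor product of one-dimensional ``hat'' functions and then verify the prescribed identity directly. Let $h\colon\Rea\to[0,1]$ be the tent function $h(t)=\max\{0,1-|t|\}$, and for $v=(v_i)_{i=1}^d\in\Int^d$ and $y=(y_i)_{i=1}^d\in\Rea^d$ I would set
\[
\Lambda(Rv,y)=\prod_{i=1}^d h\!\left(\frac{y_i}{R}-v_i\right),
\]
declaring the $Rv$-coordinate of $\Lambda(y)$ to be this number. Since each factor lies in $[0,1]$, this defines a map $\Lambda\colon\Rea^d\to[0,1]^{\VV^d_R}$, indexed by the vertices $Rv\in\VV^d_R$. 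Note that, because $\Lambda(Rv,\cdot)$ is a genuine function of $y$, there is no consistency condition to verify: the target identity is phrased through a decomposition $y=Rw+Rx$ that is ambiguous when $y$ lies on the boundary of a cube of $\QQ^d_R$, but the formula above returns the same value regardless of which cube is chosen.

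Both sides of the target identity factor over coordinates: the right-hand side is $x^{(u-w)}=\prod_{i=1}^d x_i^{(u_i-w_i)}$ by definition, and the left-hand side factors by construction. Hence it suffices to prove the one-dimensional statement
\[
h\!\left(\frac{y}{R}-v\right)=x^{(v-w)}\qquad\text{whenever } y=Rw+Rx,\ x\in[0,1],\ v,w\in\Int.
\]
Writing $k=v-w\in\Int$ and using $y/R-v=x-k$, this reduces to checking $h(x-k)=x^{(k)}$ for $x\in[0,1]$, which I would settle by splitting into the four cases $k=0$, $k=1$, $k=-1$, and $|k|\ge 2$. Indeed, for $k=0$ one gets $h(x)=1-x=x^{(0)}$; for $k=1$, $h(x-1)=1-(1-x)=x=x^{(1)}$; for $k=-1$, $h(x+1)=\max\{0,-x\}=0=x^{(-1)}$; and for $|k|\ge 2$ one has $|x-k|\ge 1$, so $h(x-k)=0=x^{(k)}$. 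Taking products over $i$ then yields $\Lambda(Ru,Rw+Rx)=x^{(u-w)}$, which is the asserted identity after renaming $v$ as $u$.

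There is essentially no hard part here: the construction is the piecewise-multilinear interpolation that assigns to each lattice vertex its weight inside the surrounding cube, and the whole content is the bookkeeping that the tensor product of hat functions reproduces the weights $x^{(u-w)}$. The only point deserving care is the boundary ambiguity flagged in the first paragraph, which the global formula disposes of automatically, and the trivial observation that the range is contained in $[0,1]$ since $h$ takes values there.
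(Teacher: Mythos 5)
Your proof is correct. The underlying object is the same as in the paper --- the weights of piecewise multilinear interpolation on the lattice $\VV^d_R$, reduced to dimension one by a tensor-product factorization --- but your route to existence is genuinely different and, arguably, cleaner. The paper first notes that $\Lambda$, if it exists, is unique, reduces to $R=1$ and $d=1$, and then defines $\Lambda(u,\cdot)$ \emph{piecewise} on each interval $[w,w+1]$ by the formula $x\mapsto(x-w)^{(u-w)}$; the whole burden of the proof is then the consistency check at the overlap, i.e.\ verifying that when $x=w=v+1$ the two candidate values agree. You instead exhibit a single closed-form global formula, the tensor product of tent functions $h(t)=\max\{0,1-|t|\}$, so that well-definedness is automatic, and the work shifts to verifying the pointwise identity $h(x-k)=x^{(k)}$ for $x\in[0,1]$ and $k\in\Int$, which your four-case check ($k=0$, $k=1$, $k=-1$, $|k|\ge 2$) settles correctly. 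What the paper's approach buys is that it never needs to guess a global formula (the definition is forced on each cube); what yours buys is that the delicate boundary bookkeeping disappears entirely, replaced by a routine computation. Both are complete proofs of the lemma.
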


\begin{proof}
Since the function $x\mapsto Rw+Rx$ maps $[0,1]^d$ onto $Q^d_{w,R}$, if such a function $\Lambda$ exists, it is unique. By dilation, it suffices to consider the case $R=1$. If $\Lambda$ is as desired in the one-dimensional case, then
\[
\Lambda(u,\cdots)=\Lambda(u_1,\cdot)\otimes \cdots \otimes\Lambda(u_i,\cdot)\otimes \cdots \otimes \Lambda(u_d,\cdot), \quad u=(u_i)_{i=1}^d\in\Int^d,
\]
is as desired in the $d$-dimensional case. Hence, we can also assume that $d=1$. To prove the result in this particular case, we must check that, given $w\in\Int$, the function given for $x\in\Rea$ by
\[
x\mapsto (x-w)^{(u-w)} \text{ if } u\in\Int \text{ and } w\le x \le w+1
\]
is well-defined. Suppose that $w\le x \le w+1$ and $v\le x \le v+1$ with $v$, $w\in\Int$. Assume without lost of generality that $v<w$. Then $x=w=v+1$. Since $x-w=0$, we have $ (x-w)^{(u-w)}=0$ unless $u-w=0$, in which case $ (x-w)^{(u-w)}=1$. Since $x-v=1$, we have $(x-v)^{(u-v)}=0$ unless $u-v=1$, in which case $(x-v)^{(u-v)}=1$. Since $u-w=u-v-1$ for every $u\in\Int$, we are done.
\end{proof}

\begin{Definition}\label{def:FPB}
Given $R>0$ and $d\in\Nat$, we define
\[\Lambda_R^d=(\Lambda_{R}^d(v,\cdot))_{v\in\VV^d_R}\]
as the function provided by Lemma~\ref{lem:FPB}.
\end{Definition}

If $d=1$ we simply put $\Lambda_R=\Lambda^1_R$ and $\VV_R=\VV^1_R$. Given a finite set $A\subset \Nat$ we can carry out the above construction replacing the set $\{1,\dots,d\}$ with the set $A$. We will denote by $\VV^A_R$ the corresponding set of vertices and by $\VV^A_R$ the corresponding function defined as in Definition~\ref{def:FPB}.

Let us give an auxiliary lemma followed by some properties of the function $\Lambda_R^d$.

\begin{Lemma}\label{lem:ineq}
Let $x=(x_i)_{i=1}^d$, $y=(y_i)_{i=1}^d\in[0,1]^d$. Then
\[
\left| \prod_{i=1}^d x_i - \prod_{i=1}^d y_i\right|\le \vert x-y\vert_1.
\]
\end{Lemma}

\begin{proof}
We proceed by induction on $d$. For $d=1$ the result is obvious. Assume that $d\in\Nat$ and that the result holds for $d-1$. Then
\begin{align*}
\left |\prod_{i=1}^d x_i - \prod_{i=1}^d y_i\right|
&\le \left|\prod_{i=1}^d x_i - y_d \prod_{i=1}^{d-1} x_i\right| + \left| y_d \prod_{i=1}^{d-1} x_i - \prod_{i=1}^d y_i\right| \\
&= |x_d-y_d| \prod_{i=1}^{d-1} x_i + y_d \left|\prod_{i=1}^{d-1} x_i - \prod_{i=1}^{d-1} y_i\right| \\
&\le |x_d-y_d| + \sum_{i=1}^{d-1} |x_i-y_i| =\vert x-y\vert_1.\qedhere
\end{align*}
\end{proof}

\begin{Lemma}\label{prop:FPB}
Let $d\in\Nat$ and $S\ge R>0$ with $S/R\in\Int$. We have:
\begin{enumerate}[label={(\roman*)}, leftmargin=*,widest=iii]
\item\label{prop:FPB:1} $\Lambda^d_R(v,x)=0$ if $x\in Q\in\QQ^d_R$ and $v\notin \VV(Q)$.
\item\label{prop:FPB:2} $\Lambda^d_R(v,u)=\delta_{u,v}$ for every $u$, $v\in \VV^d_R$.
\item\label{prop:FPB:3} $\sum_{v\in\VV_R} \Lambda^d_R(v,x)=1$ for every $x\in\Rea^d$.
\item\label{prop:FPB:4} If there is $Q\in\QQ^d_R$ such that $x$, $y\in Q$, then
\[|\Lambda^d_R(v,x)-\Lambda^d_R(v,y)|\le R^{-1} \vert x-y\vert_1\] for every $v\in\VV^d_R$.
\item\label{prop:FPB:6}Let $(A,B)$ be a partition of $\{1,\dots,d\}$. Then
\[
\Lambda^d_R((u,v),(x,y))=\Lambda^A_R(u,x)\Lambda^B_R(v,y)
\]
for every $u\in\VV^A_R$, $v\in\VV^B_R$, $x\in\Rea^A$ and $y\in\Rea^B$.
\item\label{prop:FPB:5} $\Lambda^d_S(v,x)=\sum_{u\in\VV^d_R} \Lambda^d_S(v,u)\Lambda^d_R(u,x)$ for every $x\in\Rea^d$ and $v\in\VV^d_S$.
\end{enumerate}
\end{Lemma}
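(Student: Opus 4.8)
The plan is to verify each of the six properties directly from the definition of $\Lambda_R^d$ given in Lemma~\ref{lem:FPB}, reducing everything to the one-dimensional building block $x\mapsto x^{(w)}$ and the tensor-product structure. Throughout I would exploit the defining relation $\Lambda^d_R(Ru, Rw+Rx)=x^{(u-w)}$ for $x\in[0,1]^d$, $u,w\in\Int^d$, together with the product formula $x^{(w)}=\prod_{i=1}^d x_i^{(w_i)}$.

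First I would dispose of the tensorial item \ref{prop:FPB:6}, since it is essentially a restatement of how $\Lambda_R^d$ was constructed in the proof of Lemma~\ref{lem:FPB} (as a tensor product of one-dimensional factors); splitting the index set $\{1,\dots,d\}$ into $(A,B)$ and matching the product $\prod_{i\in A}\prod_{i\in B}$ against $\Lambda_R^A(u,x)\Lambda_R^B(v,y)$ gives the claim. This item is the structural backbone, as items \ref{prop:FPB:1}--\ref{prop:FPB:4} then reduce by \ref{prop:FPB:6} (and the dilation $R=1$ normalization) to the one-dimensional situation. For \ref{prop:FPB:1}: writing $x=Rw+Rt\in Q^d_{w,R}$ with $t\in[0,1]^d$, I would observe that $\Lambda_R^d(Ru,x)=t^{(u-w)}$, and since a vertex $v=Ru\notin\VV(Q^d_{w,R})$ forces some coordinate $u_i-w_i\in\Int\setminus\{0,1\}$, the corresponding one-dimensional factor $t_i^{(u_i-w_i)}$ vanishes by definition. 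Item \ref{prop:FPB:2} follows by taking $x=v=Ru$ a vertex (so $t\in\{0,1\}^d$) and checking $t^{(u-w)}=\delta_{u,w}$ coordinatewise. For \ref{prop:FPB:3}, fixing $x$ in a cube $Q^d_{w,R}$, by \ref{prop:FPB:1} the sum reduces to the $2^d$ vertices of $Q$, and using \ref{prop:FPB:6} it factors as $\prod_{i=1}^d\bigl(t_i^{(0)}+t_i^{(1)}\bigr)=\prod_{i=1}^d\bigl((1-t_i)+t_i\bigr)=1$.

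For the Lipschitz estimate \ref{prop:FPB:4}, with $x,y\in Q^d_{w,R}$ and $v=Ru$, I would write $\Lambda_R^d(v,x)=\prod_{i=1}^d s_i$ and $\Lambda_R^d(v,y)=\prod_{i=1}^d t_i$ where each $s_i,t_i\in[0,1]$ and $|s_i-t_i|=R^{-1}|x_i-y_i|$ (since each one-dimensional factor $r\mapsto r^{(u_i-w_i)}$ is either $r$, $1-r$, or $0$, all $1$-Lipschitz, after the scaling by $R^{-1}$). Then Lemma~\ref{lem:ineq} applied to the vectors $(s_i)$ and $(t_i)$ yields $|\prod s_i-\prod t_i|\le\sum_i|s_i-t_i|=R^{-1}|x-y|_1$, which is exactly the asserted bound; this is why Lemma~\ref{lem:ineq} was recorded just beforehand.

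The last item \ref{prop:FPB:5}, the refinement/consistency relation between scales $S\ge R$ with $S/R\in\Int$, is the step I expect to be the main obstacle, since it relates two different grids rather than staying within one cube. The approach is to fix $x\in\Rea^d$ and $v\in\VV_S^d$, and to note that for $x$ lying in a given $R$-cube the right-hand sum again localizes, via \ref{prop:FPB:1}, to the vertices $u\in\VV_R^d$ of that cube. By the tensor factorization \ref{prop:FPB:6} it suffices to prove the identity in dimension $d=1$; there $\Lambda^1_R(\cdot,x)$ is an affine interpolation that is exact on the coarser grid $\VV_S^1\subset\VV_R^1$, so one checks that reconstructing $\Lambda_S^1(v,\cdot)$ by first interpolating on the fine grid and then re-expanding reproduces the same value. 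Concretely, both sides are continuous piecewise-affine functions of $x$ on the fine grid agreeing at every fine vertex $u\in\VV_R^1$ (where the left side equals $\Lambda^1_S(v,Ru)$ by \ref{prop:FPB:2}--\ref{prop:FPB:3} applied to the inner sum), and a piecewise-affine function on the fine grid is determined by its values at fine vertices; hence they coincide everywhere, and the $d$-dimensional case follows by tensoring.
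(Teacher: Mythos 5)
Your proposal is correct, and for items \ref{prop:FPB:1}--\ref{prop:FPB:4} and \ref{prop:FPB:6} it runs along essentially the same lines as the paper: everything is reduced to the one-dimensional factors via the product structure, \ref{prop:FPB:3} comes from summing the two nonvanishing factors per coordinate (the paper phrases this as an induction giving $\sum_{w\in\Int^d}x^{(w)}=1$), and \ref{prop:FPB:4} is exactly the intended application of Lemma~\ref{lem:ineq}. The genuine divergence is in \ref{prop:FPB:5}. Both you and the paper first reduce to $d=1$ via \ref{prop:FPB:6}, but the paper then verifies the identity by direct computation: it localizes $x$ between consecutive vertices $u_0\le v_0\le x<v_1\le u_1$, notes that both sides vanish for $u\notin\{u_0,u_1\}$, and for $u=u_1$ expands the sum as an explicit affine function $\eta$ of $x$, checking $\eta(u_0)=0$ and that the slope equals $1/S$. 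You instead observe that both sides are continuous and affine on each fine interval $[kR,(k+1)R]$ --- for the left-hand side this rests on $\VV_S\subset\VV_R$, which is precisely where the hypothesis $S/R\in\Int$ enters --- and that they agree at every fine vertex by \ref{prop:FPB:2}, hence coincide by uniqueness of continuous piecewise-affine interpolation. Your route is cleaner and makes the role of the divisibility hypothesis more transparent, at the cost of invoking the interpolation-uniqueness principle rather than being a purely algebraic verification. One cosmetic slip: in your parenthetical you say ``the left side equals $\Lambda^1_S(v,Ru)$ by \ref{prop:FPB:2}--\ref{prop:FPB:3} applied to the inner sum''; it is the right-hand sum that collapses to $\Lambda^1_S(v,Ru)$ at a fine vertex, and only \ref{prop:FPB:2} is needed for that.
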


\begin{proof}
\ref{prop:FPB:1} is clear from the definition. \ref{prop:FPB:2} follows from the equality $0^{(0)}=1$. A straightforward induction on $d$ yields
\[
\sum_{w\in\Int^d} x^{(w)}=1, \quad x\in[0,1]^d,
\]
and \ref{prop:FPB:3} is clear from this identity. \ref{prop:FPB:4} is a consequence of Lemma~\ref{lem:ineq}. \ref{prop:FPB:6} is clear from the definition. In light of (v), in order to prove \ref{prop:FPB:5} it suffices to consider the case $d=1$. Given $x\in\Rea$ there are $u_0$, $u_1\in\VV_S$ and $v_0$, $v_1\in\VV_R$ with $u_0\le v_0 \le x<v_1\le u_1$, and we have $v_1=v_0+R$ and $u_1=u_0+S$. Suppose that $u\in\VV_S\setminus\{u_0,u_1\}$. Then $\Lambda_S(u,v)=0$ for $v\in\{v_0,v_1\} $. Since $\Lambda_R(v,x)=0$ for $v\in\VV_R\setminus\{ v_0,v_1\} $ we have
\[
\Gamma(u,x):=\sum_{v\in\VV_R} \Lambda_S(u,v)\Lambda_R(v,x)=0=\Lambda_S(u,x).
\]
Hence, considering also the symmetry $x\mapsto -x$, if suffices to prove the result in the case when $u=u_1$. We have
\begin{align*}
\Gamma(u_1,x)
&=\Lambda_S(u_1,v_0)\Lambda_R(v_0,x) + \Lambda_S(u_1,v_1)\Lambda_R(v_1,x)\\
&=\eta(x):=\frac{v_0-u_0}{S} \frac{v_1-x}{R} + \frac{v_1-u_0}{S} \frac{x-v_0}{R}.
\end{align*}
Since $\eta(u_0)=0$ we have $\eta(y)=\gamma(y-u_0)$ for all $y\in\Rea$, where
\[
\gamma=\left(-\frac{v_0-u_0}{SR}+\frac{v_1-u_0}{SR} \right)=\frac{1}{S}.
\]
Since $\Lambda_S(u_1,x)=(x-u_0)/S$ we are done.
\end{proof}

Although the previous auxiliary results are stated in terms of the $\ell_1$-norm, in this section we will consider $\Rea^d$ and its subsets equipped with the supremum norm $\|\cdot\|_\infty$.
\begin{Theorem}\label{thm:LipFPB}
Let $d\in\Nat$ and $0<p\le 1$. There is a constant $C=C(p,d)$ such that for every $R>0$ and every $\RR\subset \QQ^d_R$, if we denote $K=\cup_{Q\in\RR} Q$ and $V=\cup_{Q\in\RR} \VV(Q)$, and we choose an arbitrary point of $V$ as base point of both metric spaces, there is a $C$-Lipschitz map $r=r_{K,V}\colon K\to \F_p(V)$ such that
\[
r(x) = \sum_{v\in V} \Lambda^d_R(v,x) \, \delta_V(v), \quad x\in K.
\]
\end{Theorem}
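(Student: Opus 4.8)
The plan is to first record that $r$ is well defined and then to establish the Lipschitz bound by a case analysis on the relative position of $x$ and $y$. For $x\in K$ pick a cube $Q_x\in\RR$ with $x\in Q_x$; by \ref{prop:FPB:1} we have $\Lambda^d_R(v,x)=0$ whenever $v\notin\VV(Q_x)$, so the defining series collapses to at most $2^d$ terms, each a scalar multiple of some $\delta_V(v)$ with $v\in V$, whence $r(x)\in\F_p(V)$. It then remains to find $C=C(p,d)$ with $\|r(x)-r(y)\|\le C\|x-y\|_\infty$. Three tools will be used repeatedly: the $p$-subadditivity $\|\sum_i m_i\|^p\le\sum_i\|m_i\|^p$ of the norm of $\F_p(V)$; the isometry $\|\delta_V(u)-\delta_V(w)\|=\|u-w\|_\infty$; and the fact that, by \ref{prop:FPB:3}, the coefficients of $r(x)-r(y)$ add up to $0$, so one may subtract a multiple of any fixed $\delta_V(v_0)$ before estimating.

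I would begin with the base case, in which $x,y$ lie in a common cube $Q\in\RR$. Fixing $v_0\in\VV(Q)$ and writing $r(x)-r(y)=\sum_{v\in\VV(Q)}(\Lambda^d_R(v,x)-\Lambda^d_R(v,y))(\delta_V(v)-\delta_V(v_0))$, $p$-subadditivity bounds $\|r(x)-r(y)\|^p$ by a sum of at most $2^d$ terms, each of which is estimated using \ref{prop:FPB:4}, the inequality $\|\delta_V(v)-\delta_V(v_0)\|=\|v-v_0\|_\infty\le R$, and $|x-y|_1\le d\|x-y\|_\infty$. This gives $\|r(x)-r(y)\|\le C_0\|x-y\|_\infty$ with $C_0=2^{d/p}d$.

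For $x,y$ in different cubes I would split on the size of $\|x-y\|_\infty$. When $\|x-y\|_\infty\ge R$ a crude estimate suffices: from $p$-subadditivity and $0\le\Lambda^d_R\le1$ one obtains $\|r(x)-\delta_V(v)\|\le 2^{d/p}R$ for any $v\in\VV(Q_x)$, and likewise for $y$; choosing vertices $v_x\in\VV(Q_x)$, $v_y\in\VV(Q_y)$ and using $\|v_x-v_y\|_\infty\le\|x-y\|_\infty+2R\le3\|x-y\|_\infty$, the three-term $p$-triangle inequality along $r(x)\to\delta_V(v_x)\to\delta_V(v_y)\to r(y)$ yields a bound $C_1\|x-y\|_\infty$ with $C_1=(2^{d+1}+3^p)^{1/p}$.

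The decisive case, and the main obstacle, is $\|x-y\|_\infty<R$ with $x,y$ sharing no cube of $\RR$: here one cannot interpolate along the segment $[x,y]$, because $K$ need not be convex or even connected. I would circumvent this by snapping the straddling coordinates. Writing $Q_x=\prod_i[Rw_i,R(w_i+1)]$ and $Q_y=\prod_i[Rw_i',R(w_i'+1)]$, the hypothesis $|x_i-y_i|<R$ forces $|w_i'-w_i|\le1$ for each $i$. Set $S=\{i:w_i'\ne w_i\}$ and, for $i\in S$, let $c_i=R\max(w_i,w_i')$ be the grid value shared by the two cells in coordinate $i$; a short computation gives $|x_i-c_i|\le\|x-y\|_\infty$ and $|y_i-c_i|\le\|x-y\|_\infty$. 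Define $\hat x,\hat y$ by $\hat x_i=\hat y_i=c_i$ for $i\in S$ and $\hat x_i=x_i$, $\hat y_i=y_i$ for $i\notin S$. Then $x,\hat x,\hat y\in Q_x$ and $\hat y,y\in Q_y$, while each of $\|x-\hat x\|_\infty$, $\|\hat x-\hat y\|_\infty$, $\|\hat y-y\|_\infty$ is at most $\|x-y\|_\infty$. Telescoping $r(x)-r(y)=(r(x)-r(\hat x))+(r(\hat x)-r(\hat y))+(r(\hat y)-r(y))$ and applying the base-case bound to each of these three common-cube differences gives $\|r(x)-r(y)\|\le 3^{1/p}C_0\|x-y\|_\infty$. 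Taking $C$ to be the largest of the constants produced in the three cases completes the proof.
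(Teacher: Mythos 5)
Your proposal is correct and follows essentially the same strategy as the paper's proof: a common-cube estimate via $p$-subadditivity, \ref{prop:FPB:4} and the isometry of $\delta_V$, a telescoping through points on the shared face when the cubes are adjacent, and a crude vertex-to-vertex estimate when $\vert x-y\vert_\infty\ge R$. The only differences are organizational (the paper splits cases according to whether the two cubes meet in every straddled coordinate rather than on the size of $\vert x-y\vert_\infty$, and routes the adjacent-cube case through a single intermediate point $z\in Q_x\cap Q_y$ instead of your two points $\hat x,\hat y$), and these do not affect correctness.
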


\begin{proof}Let $Q\in\RR$ and $x$, $y\in Q$. Pick $u\in \VV(Q)$. By Lemma~\ref{prop:FPB},
\begin{align*}
\Vert r(x)-r(y)\Vert^p
&= R \left\Vert \sum_{v\in \VV(Q)\setminus\{ u\} } (\Lambda^d_R(v,x) - \Lambda^d_R(v,y)) \frac{\delta_V(v)-\delta_V(u)}{\vert v-u\vert_\infty} \right\Vert^p \\
&\le R \sum_{v\in \VV(Q)\setminus\{ u\} } | \Lambda^d_R(v,x) - \Lambda^d_R(v,y) |^p \\
&\le (2^d-1) \vert x-y\vert_1^p.
\end{align*}
Let $x=(x_i)_{i=1}^d\in K$ and $y=(y_i)_{i=1}^d\in K$. Pick $u=(u_i)_{i=1}^d$ and $w=(w_i)_{i=1}^d\in\Int^d$ such that $x\in Q^d_{R,u}$ and $y\in Q^d_{R,w}$. Define
\[
F=\{i\in\{1,\dots,d\} \colon u_i=w_i\}.
\]
For each $i\in G=\{1,\dots,d\}\setminus F$ there is $m_i\in\{u_i,u_i+1\}$ and $n_i\in\{w_i,w_i+1\}$ such that
\[
|y_i-x_i|=|y_i-Rn_i|+|Rn_i-Rm_i|+|Rm_i-x_i|.
\]
Suppose that $n_i=m_i$ for every $i\in G$. Define $z=(z_i)_{i=1}^d$ by
\[
z_i=\begin{cases} x_i & \text{ if } i\in F, \\ Rn_i=Rm_i & \text{ if } i\in G. \end{cases}
\]
We have $z\in Q^d_{R,u} \cap Q^d_{R,w}$. Consequently,
\begin{align*}
\Vert r(x)-r(y)\Vert^p
&\le \Vert r(x)-r(z)\Vert^p+\Vert r(z)-r(y)\Vert^p\\
&\le (2^d-1)( \vert x-z\vert_1^p+\vert z-y\vert_1^p)\\
&\le 2^{1-p} (2^d-1) (\vert x-z\vert_1+\vert z-y\vert_1)^p\\
&= 2^{1-p} (2^d-1) \vert x-y\vert_1^p.
\end{align*}
Suppose that $m_i\not=n_i$ for some $i$. Define $x'=(x_i')_{i=1}^d$ and $y'=(y_i')_{i=1}^d$ by
\[
x_i'=\begin{cases} Ru_i=Rw_i & \text{ if } i\in F, \\ Rm_i & \text{ if } i\in G, \end{cases}
\quad
y_i'=\begin{cases} Ru_i=Rw_i & \text{ if } i\in F, \\ R n_i & \text{ if } i\in G.
\end{cases}
\]
We have $x'\in V^d_{R,u}$, $y'\in V^d_{R,w}$, and $1\le \mu:=\max_{i\in G} |m_i-n_i|$. Hence
\begin{align*}
\Vert r(x)&-r(y)\Vert^p\\
&\le \Vert r(x)-r(x')\Vert^p+ \Vert r(x')-r(y')\Vert^p+ \Vert r(y')-r(y)\Vert^p\\
&\le (2^d-1)\vert x-x'\vert_1^p+\vert x'-y'\vert_\infty^p + (2^d-1)\vert y-y'\vert_1^p\\
&\le 2 (2^d-1) d R^p + \mu^p R^p \\
&\le (1 + 2 d (2^d-1) ) \mu^p R^p\\
&\le (1 + 2 d (2^d-1) ) \vert x-y\vert_\infty.
\end{align*}
This way the result holds with $C(p,d)= (1 + 2 d (2^d-1) )^{1/p}$.
\end{proof}

Next, we show that $\F_p(\Rea^d)$ has a Schauder basis for every $d\in\Nat$.
\begin{Lemma}\label{lem:17}
Let $\MM$ and $\NN$ be quasi-metric spaces. For $i=1$, $2$, let $f_i\colon\MM_i\subset\MM\to \NN$ be a Lipschitz function. Assume that
\begin{enumerate}[label={(\roman*)}, leftmargin=*,widest=iii]
\item $\MM=\MM_1\cup \MM_2$,
\item $f_1|_{\MM_1\cap \MM_2}=f_2|_{\MM_1\cap \MM_2}$, and
\item\label{it:17:3} There is a constant $C$ such that $d(x_1,\MM_1\cap \MM_2)\le Cd(x_1,x_2)$ for every $(x_1,x_2)\in\MM_1\times\MM_2$.
\end{enumerate}
Then, the map $f\colon\MM\to\NN$ defined by $f|_{\MM_i}=f_i$ for $i=1$, $2$ is Lipschitz. Moreover, if $k_\NN$ and $k_\MM$ are the quasi-metric constanst of $\NN$ and $\MM$, respectively, we have
\[
\Lip(f) \le k_\NN(C+k_\MM + Ck_\MM)\max_{i=1,2} \Lip(f_i).
\]
\end{Lemma}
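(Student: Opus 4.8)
The plan is to estimate $d(f(x),f(y))$ for arbitrary $x,y\in\MM$, splitting into two cases according to whether the two points lie in a common piece. Write $L=\max_{i=1,2}\Lip(f_i)$. First note that $f$ is well defined precisely because of hypothesis~(ii). If both points belong to the same $\MM_i$, then $d(f(x),f(y))=d(f_i(x),f_i(y))\le L\, d(x,y)$ directly, so this case contributes nothing to the constant beyond $L$ and uses neither $C$ nor the quasi-metric constants.

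The substantive case is when, after relabelling, $x\in\MM_1\setminus\MM_2$ and $y\in\MM_2\setminus\MM_1$. Here I would route the estimate through a bridge point in the overlap. By hypothesis~(iii), $d(x,\MM_1\cap\MM_2)\le C\,d(x,y)$, so for each $\varepsilon>0$ I pick $z\in\MM_1\cap\MM_2$ with $d(x,z)\le C\,d(x,y)+\varepsilon$. Since $z$ lies in the overlap, hypothesis~(ii) gives $f_1(z)=f_2(z)$, so $f(z)$ is unambiguous and can be reached from either side. Applying the quasi-triangle inequality of $\NN$ through $f(z)$ yields
\[
d(f(x),f(y))\le k_\NN\bigl(d(f_1(x),f_1(z))+d(f_2(z),f_2(y))\bigr)\le k_\NN L\bigl(d(x,z)+d(z,y)\bigr),
\]
where I used that $x,z\in\MM_1$ and $z,y\in\MM_2$ to invoke the two Lipschitz bounds.

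It remains to control $d(x,z)+d(z,y)$ by $d(x,y)$. The first summand is at most $C\,d(x,y)+\varepsilon$ by the choice of $z$. For the second, the quasi-triangle inequality of $\MM$ gives $d(z,y)\le k_\MM\bigl(d(z,x)+d(x,y)\bigr)\le k_\MM\bigl((C+1)d(x,y)+\varepsilon\bigr)$. Adding these and substituting back produces
\[
d(f(x),f(y))\le k_\NN L\bigl((C+k_\MM(C+1))d(x,y)+(1+k_\MM)\varepsilon\bigr).
\]
Letting $\varepsilon\to 0$ gives exactly $d(f(x),f(y))\le k_\NN(C+k_\MM+Ck_\MM)L\,d(x,y)$, the claimed estimate. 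The only genuine subtlety is that the infimum defining the distance to the overlap need not be attained, which is what forces the $\varepsilon$-approximation; beyond that, the proof is a careful accounting of where the two quasi-triangle constants $k_\NN$ and $k_\MM$ enter — the asymmetric appearance of $k_\MM$ in the final constant being a direct trace of the single extra triangle step needed to pass from $d(z,x)$ to $d(z,y)$.
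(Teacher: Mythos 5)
Your argument is correct and follows essentially the same route as the paper's proof: bridge through a point $z$ in $\MM_1\cap\MM_2$ chosen $\varepsilon$-close to realizing $d(x,\MM_1\cap\MM_2)$, apply the quasi-triangle inequality in $\NN$ at $f(z)$, then the quasi-triangle inequality in $\MM$ to convert $d(z,y)$ into $(C+1)k_\MM d(x,y)$ terms, and let $\varepsilon\to 0$. The bookkeeping of where $k_\NN$, $k_\MM$ and $C$ enter matches the paper exactly, so there is nothing to add.
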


\begin{proof}
Put $L=\max_{i=1,2} \Lip(f_i)$. Let $(x_1,x_2)\in\MM_1\times\MM_2$ and pick $x\in\MM_1\cap \MM_2$. Since $f_1(x)=f_2(x)$ we have
\begin{align*}
d_\NN(f_1(x_1), f_2(x_2))
&\le k_\NN ( d_\NN(f_1(x_1), f_1(x))+d_\NN(f_2(x), f_2(x_2)))\\
&\le k_\NN L ( d_\MM(x_1,x)+ d_\MM(x,x_2))\\
&\le k_\NN L ( (1+k_\MM) d_\MM(x_1,x)+ k_\MM d_\MM(x_1,x_2)).
\end{align*}
Since the element $x$ can be chosen so that $d_\MM(x_1,x)$ is arbitrarily close to $d_\MM(x_1,\MM_1\cap\MM_2)$, using the assumption \ref{it:17:3} yields the desired result.
\end{proof}

The following lemma exhibits a situation in which Lemma~\ref{lem:17} is useful that will occur several times throughout this section,.
\begin{Lemma}\label{lem:18}
Let $\MM$ and $\NN$ be quasi-metric spaces. For $i=1$, $2$, let $f_i\colon\MM_i\subset\MM\to \NN$ be a Lipschitz function. Assume that $\MM=\MM_1\cup \MM_2$ and that $f_1|_{\MM_1\cap \MM_2}=f_2|_{\MM_1\cap \MM_2}$. Suppose that there are constants $\lambda>0$ and $C\ge 1$ such that
\begin{enumerate}[label={(\roman*)},widest=ii]
\item $\MM$ is $\lambda$-separated, and
\item $d(x_1,\MM_1\cap \MM_2)\le C$ for every $x_1\in\MM_1$.
\end{enumerate}
Then, the map $f\colon\MM\to\NN$ defined by $f|_{\MM_i}=f_i$ for $i=1$, $2$ is Lipschitz. Moreover, if $k_\NN$ and $k_\MM$ are the quasi-metric constanst of $\NN$ and $\MM$, respectively,
\[
\Lip( f) \le k_\NN \left( \frac{C}{\lambda} ( 1+k_\MM) +k_\MM \right) \max_{i=1,2} \Lip (f_i).
\]
\end{Lemma}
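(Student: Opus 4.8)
The plan is to deduce Lemma~\ref{lem:18} directly from Lemma~\ref{lem:17}: I will show that hypotheses (i) and (ii) of Lemma~\ref{lem:18} force the relative bound required as hypothesis~\ref{it:17:3} of Lemma~\ref{lem:17}, but with the constant $C/\lambda$ in place of $C$. Once this is in hand, the conclusion is just a substitution into the estimate already proved there.

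First I would fix an arbitrary pair $(x_1,x_2)\in\MM_1\times\MM_2$ and bound $d(x_1,\MM_1\cap\MM_2)$ by a multiple of $d(x_1,x_2)$. The degenerate case $x_1=x_2$ must be isolated, since one cannot divide by $d(x_1,x_2)$: here $x_1\in\MM_1\cap\MM_2$, so $d(x_1,\MM_1\cap\MM_2)=0$ and the desired inequality holds trivially. When $x_1\ne x_2$, the $\lambda$-separation of $\MM$ (interpreted, consistently with the paper's notion of uniform separation, as $d(x,y)\ge\lambda$ for distinct $x,y$) gives $d(x_1,x_2)\ge\lambda$, hence $1\le d(x_1,x_2)/\lambda$; multiplying the absolute bound of hypothesis (ii) by this factor yields
\[
d(x_1,\MM_1\cap\MM_2)\le C\le \frac{C}{\lambda}\,d(x_1,x_2).
\]

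With this relative bound established, I would apply Lemma~\ref{lem:17} with its constant taken equal to $C/\lambda$. Substituting $C/\lambda$ for $C$ in its conclusion gives
\[
\Lip(f)\le k_\NN\left(\frac{C}{\lambda}+k_\MM+\frac{C}{\lambda}k_\MM\right)\max_{i=1,2}\Lip(f_i)=k_\NN\left(\frac{C}{\lambda}(1+k_\MM)+k_\MM\right)\max_{i=1,2}\Lip(f_i),
\]
which is precisely the asserted estimate; in particular the Lipschitz property of $f$ itself comes for free from Lemma~\ref{lem:17}.

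Since the whole argument is a reduction to the already-proved Lemma~\ref{lem:17}, there is no genuine obstacle. The only points demanding attention are the separation of the $x_1=x_2$ case noted above, and the observation that the finiteness of $d(x_1,\MM_1\cap\MM_2)$ asserted in hypothesis (ii) guarantees $\MM_1\cap\MM_2\ne\emptyset$, so that the point $x\in\MM_1\cap\MM_2$ used inside the proof of Lemma~\ref{lem:17} indeed exists.
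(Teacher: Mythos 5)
Your proposal is correct and follows the paper's own argument exactly: both reduce to Lemma~\ref{lem:17} by observing that $\lambda$-separation gives $d(x_1,x_2)\ge\lambda$ for $x_1\ne x_2$, so hypothesis (ii) upgrades to the relative bound $d(x_1,\MM_1\cap\MM_2)\le \frac{C}{\lambda}d(x_1,x_2)$, and then the stated constant is obtained by substitution. Your explicit handling of the degenerate case $x_1=x_2$ and the remark that (ii) forces $\MM_1\cap\MM_2\ne\emptyset$ are minor points the paper leaves implicit.
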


\begin{proof}Let $(x_1,x_2)\in\MM_1\times\MM_2$ with $x_1\not=x_2$. We have
\[
d(x_1,\MM_1\cap \MM_2)\le \frac{C}{\lambda} d(x_1,x_2).
\]
Hence, the result follows from Lemma~\ref{lem:17}.
\end{proof}

\begin{Theorem}[cf. \cite{AACD2019}*{Theorem 5.7}]\label{thm:basis01d}
Let $0<p\le 1$ and $d\in\Nat$. Then $\F_p([0,1]^d)$ has a Schauder basis. In fact, if $V_n=[0,1]^d \cap 2^{-n}\Int^d$ for all $n\in\Nat_*$ and $V_{-1}=\{0\}$, and we put $\alpha(x)=n$ if $x\in V_n\setminus V_{n-1}$, then, any arrangement $(f(x_j))_{j=1}^\infty$ of the family
\[
f(x)=\delta_{[0,1]^d}(x)-\sum_{v\in V_{n-1}} \Lambda^d_{2^{-n+1}}(v,x) \delta_{[0,1]^d}(v) \quad n\in\Nat_*, \, x\in V_n\setminus V_{n-1}
\]
such that $(\alpha(x_j))_{j=1}^\infty$ is non-decreasing is a Schauder basis of $\F_p([0,1]^d)$.
\end{Theorem}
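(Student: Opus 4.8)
The plan is to manufacture a uniformly bounded, increasing, commuting family of finite‑rank projections on $X:=\F_p([0,1]^d)$ whose ranges have the correct dimensions and dense union, and then to invoke the criterion for the existence of a Schauder basis recorded in the Terminology subsection. Writing $V_n=\VV^d_{2^{-n}}\cap[0,1]^d$ for $n\ge -1$ (so $V_{-1}=\{0\}$), I would first set up the \emph{grid projections} $\Pi_n\colon X\to X$ as the linearizations of the maps $x\mapsto\sum_{v\in V_n}\Lambda^d_{2^{-n}}(v,x)\,\delta_{[0,1]^d}(v)$. By Theorem~\ref{thm:LipFPB} (taking $R=2^{-n}$ and $\RR$ equal to the fine cubes tiling $[0,1]^d$) composed with the norm‑one linearization of the inclusion $V_n\hookrightarrow[0,1]^d$, each such map is $C$‑Lipschitz, whence $\|\Pi_n\|\le C=C(p,d)$ for all $n$. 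Item~\ref{prop:FPB:2} of Lemma~\ref{prop:FPB} shows $\Pi_n$ fixes $\delta_{[0,1]^d}(v)$ for every $v\in V_n$, so $\Pi_n$ is a projection onto $\spn(\delta_{[0,1]^d}(v)\colon v\in V_n)$, of dimension $|V_n|-1$. Item~\ref{prop:FPB:5} of Lemma~\ref{prop:FPB} (with $S=2^{-m}$, $R=2^{-n}$, $m\le n$), restricted to $V_n$ through item~\ref{prop:FPB:1}, gives $\Pi_m\circ\Pi_n=\Pi_{\min\{m,n\}}$ (the case $m\ge n$ being immediate since $\Pi_m$ fixes the generators indexed by $V_n\subseteq V_m$). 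Finally $\cup_n V_n$ is the dense set of dyadic points, so $\cup_n\Pi_n(X)$ is dense. This already yields a finite‑dimensional decomposition with uniform constant.

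The crux is to resolve $X$ \emph{one point at a time}, i.e.\ to interpolate between $\Pi_{n-1}$ and $\Pi_n$. For any $U$ with $V_{n-1}\subseteq U\subseteq V_n$ I would build a retraction $r_U\colon[0,1]^d\to\F_p(U)$ of the same fuzzy‑pull‑back type, agreeing with the fine retraction on each fine cube whose parent coarse cube has all of its new vertices in $U$, and with the coarse retraction elsewhere; the pieces match on the shared coarse vertices, so they glue through Lemma~\ref{lem:17} and Lemma~\ref{lem:18}. The decisive point is a \emph{local} Lipschitz estimate, run exactly as in the proof of Theorem~\ref{thm:LipFPB} by bounding $\|r_U(x)-r_U(y)\|$ for $x,y$ lying in one cube or in two adjacent cubes; this produces a constant $C'=C'(p,d)$ \emph{independent of $n$ and of $U$}. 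Let $\Pi_U$ be the associated projection onto $\spn(\delta_{[0,1]^d}(v)\colon v\in U)$, so that $\|\Pi_U\|\le C'$. For $U'\subseteq U$, since $\Pi_{U'}$ has range in $\spn(\delta_{[0,1]^d}(v)\colon v\in U)$ and $\Pi_U$ fixes those generators, $\Pi_U\Pi_{U'}=\Pi_{U'}$ automatically, while the reverse identity $\Pi_{U'}\Pi_U=\Pi_{U'}$ follows from an analogue of item~\ref{prop:FPB:5} of Lemma~\ref{prop:FPB} built into the $r_U$. Moreover, by the very design of $r_U$, one has $\Pi_U\delta_{[0,1]^d}(x)=\Pi_{n-1}\delta_{[0,1]^d}(x)$ for every $x\in V_n\setminus U$.

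With these tools the basis follows. Since $2^{-n+1}=2^{-(n-1)}$, the listed vector is $f(x)=(\Id-\Pi_{n-1})\delta_{[0,1]^d}(x)$ for $x\in V_n\setminus V_{n-1}$. Fix an admissible arrangement $(x_j)$ (that is, $(\alpha(x_j))$ non‑decreasing), put $U_m=\{0,x_1,\dots,x_m\}$ and $Q_m=\Pi_{U_m}$; monotonicity of $\alpha$ guarantees $V_{n-1}\subseteq U_m\subseteq V_n$ whenever $x_m$ has generation $n=\alpha(x_m)$, so each $Q_m$ is defined. By the previous step $\sup_m\|Q_m\|\le C'$, the ranges increase with dense union $\spn(\delta_{[0,1]^d}(v)\colon v\text{ dyadic})$, $\dim Q_m(X)=|U_m|-1=m$, and $Q_iQ_j=Q_{\min\{i,j\}}$. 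Hence $(Q_m)$ satisfies the hypotheses of the Terminology criterion and there is a Schauder basis whose partial‑sum projections are $(Q_m)$. It only remains to identify $f(x_m)$ as a spanning vector of the line $Q_m(X)\cap\Ker(Q_{m-1})$: indeed $x_m\in U_m$ and $V_{\alpha(x_m)-1}\subseteq U_m$ give $f(x_m)\in Q_m(X)$, while, with $n=\alpha(x_m)$,
\[
Q_{m-1}f(x_m)=\Pi_{U_{m-1}}\delta_{[0,1]^d}(x_m)-\Pi_{U_{m-1}}\Pi_{n-1}\delta_{[0,1]^d}(x_m)=\Pi_{n-1}\delta_{[0,1]^d}(x_m)-\Pi_{n-1}\delta_{[0,1]^d}(x_m)=0,
\]
using the two displayed properties of $\Pi_U$ (here $x_m\in V_n\setminus U_{m-1}$ and $V_{n-1}\subseteq U_{m-1}$), and $f(x_m)\neq0$. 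Thus $(f(x_j))$ is the announced Schauder basis.

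The main obstacle I expect is the uniform local Lipschitz estimate for the intermediate retractions $r_U$: the constant must not deteriorate as the generation $n$ grows (the blocks have unboundedly large dimension) nor as $U$ ranges over all intermediate vertex sets. The gluing Lemmas~\ref{lem:17} and \ref{lem:18} are exactly what prevents the constant from accumulating across the unboundedly many cubes, reducing the whole estimate to the bounded‑complexity, cube‑local computation modelled on Theorem~\ref{thm:LipFPB}.
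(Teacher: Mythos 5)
Your overall architecture --- a uniformly bounded, increasing, commuting family of intermediate projections $\Pi_U$ for $V_{n-1}\subseteq U\subseteq V_n$, fed into the criterion from the Terminology subsection --- is sound and is, in effect, what the paper does. But the step you yourself identify as the crux, namely the construction of $r_U$ on the continuum $[0,1]^d$ by gluing the fine retraction over ``full'' coarse cubes with the coarse retraction elsewhere, fails for $d\ge 2$. First, the map is not well defined: the domains of the two pieces meet along entire $(d-1)$-dimensional faces, not merely along coarse vertices, and on such a face the fine and coarse retractions genuinely differ. For instance, at the new vertex $(2R,R)$ on the face $\{2R\}\times[0,2R]$ shared by a full and a non-full coarse square, the fine retraction gives $\delta_{[0,1]^2}((2R,R))$ while the coarse one gives $\tfrac12\delta_{[0,1]^2}((2R,0))+\tfrac12\delta_{[0,1]^2}((2R,2R))$. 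So the hypothesis $f_1|_{\MM_1\cap\MM_2}=f_2|_{\MM_1\cap\MM_2}$ of Lemma~\ref{lem:17} is violated and the gluing does not go through. Second, even ignoring well-definedness, the range is wrong: a new vertex $v\in U$ all of whose containing coarse cubes are non-full (take $U=V_{n-1}\cup\{c\}$ with $c$ the centre of a coarse cube, $d\ge 2$) is never hit, so $\Pi_U=\Pi_{n-1}$ and $\dim Q_m(X)\ne m$; the one-point-at-a-time resolution collapses. For $d=1$ both objections vanish (adjacent coarse intervals meet in a single coarse vertex, and each coarse interval has exactly one new vertex), which is presumably why the construction looks plausible.

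The paper avoids this by never gluing on the continuum. Its interpolating map is defined on the discrete grid $V_n$ only: $r_{n,F}(x)=\delta(x)$ for $x\in F$ and $r_{n,F}(x)=L_{n-1}(T_{n-1}(\delta(x)))$ for $x\in V_n\setminus F$. There the two pieces overlap exactly on $V_{n-1}$, where they agree; since $V_n$ is $2^{-n}$-separated while every point of $V_n$ lies within $2^{-n}$ of $V_{n-1}$, Lemma~\ref{lem:18} yields a Lipschitz bound independent of $n$ and $F$. The continuum-to-grid step is delegated entirely to the single operator $T_n$ of Theorem~\ref{thm:LipFPB}, and the composition $U_{n,F}\circ T_n$ is then a uniformly bounded projection onto $\spn(\delta_{[0,1]^d}(v)\colon v\in V_{n-1}\cup F)$. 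If you replace your $\Pi_U$ by these compositions (with $F=U\setminus V_{n-1}$), the rest of your argument --- the commutation relations, the dimension count, and the identification of $f(x_m)$ as spanning $Q_m(X)\cap\Ker(Q_{m-1})$ --- goes through essentially as written. The paper itself packages this slightly differently, as a finite-dimensional decomposition $(Y_n)$ whose blocks carry uniformly unconditional bases $(f(x))_{x\in V_n\setminus V_{n-1}}$, but that is a difference of presentation rather than of substance.
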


\begin{proof}
Set $\delta=\delta_{[0,1]^d}$ and $\delta_n=\delta_{V_n}$ for $n\in\Nat_*$.
By Theorem~\ref{thm:LipFPB}, there exist a constant $C$ and linear maps $T_n\colon \F_p([0,1]^d)\to \F_p(V_n)$ such that $\Vert T_n\Vert\le C$ and
\[
T_n(\delta(x)) = \sum_{v\in V_n} \Lambda^d_{2^{-n}}(v,x) \, \delta_{n}(v), \quad x\in [0,1]^d.
\]
Let $m$, $n\in\Nat_*$ with $m\le n$. Since $V_m\subset V_n$, we can consider the canonical map $L_{m,n}\colon \F_p(V_m)\to\F_p(V_n)$ associated to the inclusion. Consider also the canonical map $L_n\colon \F_p(V_n)\to\F_p([0,1]^d)$ associated to the inclusion of $V_n$ into $[0,1]^d$. Applying Proposition~\ref{prop:FPB}~\ref{prop:FPB:5} yields
\begin{itemize}[leftmargin=*]
\item $T_m\circ L_{n}\circ T_n=T_m$ and $T_n \circ L_m\circ T_m=L_{m,n}\circ T_m$.
\end{itemize}
Moreover, since $\cup_{n=0}^\infty V_n$ is dense in $[0,1]^d$,
\begin{itemize}[leftmargin=*]
\item if $X_n= L_n(T_n(\F_p([0,1]^d)))$, $\cup_{n=0}^\infty X_n$ is dense is $\F_p([0,1]^d)$.
\end{itemize}
Since, with the convention $L_{-1}\circ T_{-1}=0$,
\[
Y_n=\{ x -L_{n-1}(T_{n-1}(x)) \colon x\in X_n\},
\]
and $X_n=\spn\{\delta(x) \colon x \in V_n\}$, the family of nonzero vectors
\[
\BB_n:=(f (x))_{x\in V_n\setminus V_{n-1}}
\]
generates the space $Y_n$. We shall prove that $\BB_n$ is an unconditional basis of $Y_n$ with uniformly bounded unconditional basis constant. Let $F\subset V_n\setminus V_{n-1}$. We define $r_{n,F}\colon V_{n} \to \F_p([0,1]^d)$ by
\[
r_{n,F}(x)=\begin{cases} L_{n-1}\circ T_{n-1}(\delta(x)) &\text{ if } x\in V_{n}\setminus F, \\ \delta(x) &\text{ if } x\in F. \end{cases}
\]
If $z\in V_{n-1}$, then $L_{n-1}( T_{n-1}(\delta(z)))=L_{n-1}(\delta_{n-1}(z))=\delta(z)$. For every $x\in V_n$ there is $z\in V_{n-1}$ such that $\vert x-z\vert_\infty=2^{-n}$, and $V_n$ is $2^{-n}$-separated. By Lemma~\ref{lem:18}, $r_{n,F}$ is $C_1$-Lipschitz with $C_1=(1+2^{1/p})2^{1/p-1}C$. We infer that there is $U_{n,F}\colon \F_p(V_{n})\to \F_p([0,1]^d)$ such that $\Vert U_{n,F}\Vert\le C_1$ and $U_{n,F}\circ \delta_{n}=r_{n,F}$. Put $Q_{n,F}=U_{n,F}\circ T_n|_{Y_n}$. We have $\Vert Q_{n,F}\Vert \le C C_1$ and
\[
Q_{n,F}(f(x))=\begin{cases}
f(x) & \text{ if } x\in F, \\
0 & \text{ if } x\in (V_n\setminus V_{n-1})\setminus F.
\end{cases}
\]
Thus, the mappings $(Q_{n,F})_{F\subset V_n\setminus V_{n-1}}$ are commuting projections satisfying $Q_{n,F}(Y_n)=\spn\{f(x)\colon x\in F\}$ and $Q_{n,F}^{-1}(0)=\spn\{f(x)\colon x\in (V_n\setminus V_{n-1})\setminus F\}$, which implies that $\B_n$ is unconditional basis of $Y_n$ with basis constant at most $CC_1$.
\end{proof}

We have explicitly constructed a Schauder basis of $\F_p([0,1]^d)$. Using the isomorphism $\F_p([0,1]^d)\simeq\F_p(\Rea^d)$ we infer that $\F_p(\Rea^d)$ has a Schauder basis, but now our proof is not constructive. So, it is worth to mention that an argument slightly more involved than the one we have used to prove Theorem~\ref{thm:basis01d} yields an explicit Schauder basis of $\F_p(\Rea^d)$.

\begin{Theorem}\label{thm:basisRd}Let $0<p\le 1$ and $d\in\Nat$. Given an increasing sequence of natural numbers $(k_n)_{n=-1}^\infty$, put
\begin{align*}
V_n&=\{x \in \Rea^d \colon \vert x \vert_\infty \le k_n\} \cap 2^{-n}\Int^d, \text{ and}\\
W_n&=\{x \in \Rea^d \colon \vert x \vert_\infty \le k_{n-1}\} \cap 2^{-n}\Int^d\\
\end{align*}
if $n\in\Nat_*$, and set $V_{-1}=\{0\}$. Define
\[
s_n(x)=\left( \min \left\{ -2^{-n}+ \vert x \vert_\infty, |x_i| \right\}\sgn(x_i)\right)_{i=1}^d, \, x=(x_i)_{i=1}^d\in V_n\setminus W_n,
\]
and put $\eta(x)=(n,\vert x\vert_\infty-k_{n-1})$ if $x\in V_n\setminus W_n$ and $\eta(x)=(n,0)$ if $x\in W_n\setminus V_{n-1}$. Define $f(x)$ for $x \in \cup_{n=0}^\infty V_n$ by
\[
f(x)=\begin{cases} \delta_{\Rea^d}(x) - \delta_{\Rea^d}(s_n(x)) & \text{if } x \in V_n\setminus W_n, \\
\delta_{\Rea^d}(x)-\sum_{v\in V_{n-1}} \Lambda^d_{2^{-n+1}}(v,x) \delta_{\Rea^d}(v)
& \text{if } x\in W_n\setminus V_{n-1}.
\end{cases}
\]
Let $(x_j)_{j=1}^\infty$ be an arrangement of $\cup_{n=0}^\infty V_n$ such that $(\eta(x_j))_{j=1}^\infty$ is non-decreasing with respect to the lexicographical order. Then $(f(x_j))_{j=1}^\infty$ is a Schauder basis of $\F_p(\Rea^d)$.
\end{Theorem}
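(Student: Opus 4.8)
The plan is to produce, for every $m\in\Nat$, a uniformly bounded linear projection $P_m$ on $\F_p(\Rea^d)$ whose range is $E_m:=\spn\{\delta_{\Rea^d}(x_j)\colon 1\le j\le m\}$ and which satisfy $P_m\circ P_{m'}=P_{\min\{m,m'\}}$, and then to invoke the characterization of Schauder bases in terms of their partial-sum projections recorded in Section~\ref{sect:PpNotlp}'s preamble. Writing $A_m=\{x_1,\dots,x_m\}$, the $\eta$-ordering makes $(A_m)_{m}$ an increasing sequence of finite sets interpolating the filtration $V_{-1}\subset V_0\subset W_1\subset V_1\subset W_2\subset\cdots$, where at level $n$ one first adjoins the interior points $W_n\setminus V_{n-1}$ and then the $\ell_\infty$-shells $S_{n,i}:=\{x\in V_n\colon \vert x\vert_\infty=k_{n-1}+i2^{-n}\}$ in order of increasing $i$. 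The backbone of the argument is the observation that $s_n$ is nothing but an $\ell_\infty$-clamp: if $x\in S_{n,i}$, then $s_n(x)$ is exactly the nearest-point projection of $x$ onto the cube $\{\vert\cdot\vert_\infty\le k_{n-1}+(i-1)2^{-n}\}$, and it is a vertex of $2^{-n}\Int^d$ lying on the previous shell. Consequently each $f(x)$ in the statement will be realized as the increment $(\Id-P_{m-1})(\delta_{\Rea^d}(x_m))$ of two consecutive projections.

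Each $P_m$ is built from a barycentric Lipschitz retraction $\rho_m\colon\Rea^d\to\F_p(A_m)$ of the form $\rho_m=r\circ\varphi$, where $\varphi$ is the $1$-Lipschitz clamp of $\Rea^d$ onto a suitable $\ell_\infty$-box and $r$ is the fuzzy pull-back retraction at scale $2^{-n}$ furnished by Theorem~\ref{thm:LipFPB}; then $P_m$ is the linearization of $\rho_m$ composed with the inclusion $\F_p(A_m)\hookrightarrow\F_p(\Rea^d)$. For the completed stages $A_m\in\{W_n,V_n\}$ this is literally clamp-then-pull-back, and for a shell vertex $x\in S_{n,i}$ the identity $s_n(x)=\varphi(x)$ together with Lemma~\ref{prop:FPB}\ref{prop:FPB:2} gives $P_{m-1}(\delta_{\Rea^d}(x))=\delta_{\Rea^d}(s_n(x))$, so that the increment is precisely $f(x)=\delta_{\Rea^d}(x)-\delta_{\Rea^d}(s_n(x))$. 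For the intermediate stages, where only a subset of an interior block or a subset of a single shell has been adjoined, I would glue, via Lemma~\ref{lem:18} applied with separation $\lambda=2^{-n}$ and interface bound $2^{-n}$ so that $C/\lambda=1$, the already-built retraction with the map that keeps $\delta_{\Rea^d}(x)$ on the chosen points and collapses the remaining points of the same shell to $\delta_{\Rea^d}(s_n(x))$; on an interior block this is exactly the argument of Theorem~\ref{thm:basis01d}. In all cases Theorem~\ref{thm:LipFPB} bounds the pull-back part by $C(p,d)$, while the clamp and the gluing contribute factors depending only on $p$ and $d$, whence $\sup_m\Vert P_m\Vert<\infty$. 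Density of $\cup_m E_m$ follows from $k_n\to\infty$ and the density of the dyadics in $\Rea^d$, and $\dim E_m=m$ is immediate, since each $f(x_m)$ equals $\delta_{\Rea^d}(x_m)$ minus a linear combination of vectors $\delta_{\Rea^d}(v)$ with $v\in A_{m-1}$.

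The heart of the proof is the nesting relation $P_m\circ P_{m'}=P_m$ for $m\le m'$. Because all $P_m$ are clamp-then-pull-back with both the box radius and the dyadic scale non-decreasing along the sequence, any two members have comparable data, and since $P_{m'}\circ P_m=P_m$ is immediate from Lemma~\ref{prop:FPB}\ref{prop:FPB:2}, everything reduces to the pointwise identity
\[
\sum_{v\in A_{m'}}\Lambda^d_{2^{-a'}}(v,\varphi'(x))\,\Lambda^d_{2^{-a}}(u,\varphi(v))=\Lambda^d_{2^{-a}}(u,\varphi(x)),\qquad u\in A_m,
\]
where $(\varphi,2^{-a})$ and $(\varphi',2^{-a'})$ are the data of $P_m$ and $P_{m'}$. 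This is where the real work lies. The key point is that the boundary radius of the coarse box is a vertex of the fine grid $2^{-a'}\Int$, so no fine cube straddles it; tensorizing through Lemma~\ref{prop:FPB}\ref{prop:FPB:6} then reduces the identity to a one-dimensional statement, which in the coordinates falling inside the coarse box is exactly the refinement identity Lemma~\ref{prop:FPB}\ref{prop:FPB:5}, and in the coordinates falling outside it is the trivial collapse to the clamped boundary value. I expect the bookkeeping of these clamps across the many stages — reconciling the three regimes of interior refinement, full shells, and partial shells within a single monotone family of projections — to be the main obstacle. Once the displayed identity is established, the partial-sum criterion recorded in the introduction yields that $(f(x_j))_{j=1}^\infty$ is a Schauder basis of $\F_p(\Rea^d)$ with associated projections $(P_m)_{m=1}^\infty$.
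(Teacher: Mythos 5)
Your proposal follows essentially the same route as the paper's proof: the coordinatewise clamps $r_t$, the pull-back operators of Theorem~\ref{thm:LipFPB}, the commutation identity between clamp and pull-back (proved, as you indicate, by tensorizing via Lemma~\ref{prop:FPB}\ref{prop:FPB:6} and splitting into the cases $|x|\le t$ and $|x|>t$, with Lemma~\ref{prop:FPB}\ref{prop:FPB:5} handling the interior case), the identification of $s_n(x)$ as the clamp of $x$ to the next smaller box, and Lemma~\ref{lem:18} with $C/\lambda=1$ for the partial-shell and partial-block stages. The one organizational difference is that the paper checks commutation only for the completed stages $V_n$, $W_n$, obtaining a Schauder decomposition $(Y_j)$, and then shows each block family is a basis of its block with uniformly bounded constant via the auxiliary projections $Q_{n,F}$ and $Q_{\alpha,F}$ (as in Theorem~\ref{thm:basis01d}); this sidesteps the cross-stage bookkeeping you flag as the main obstacle, since partial sums of a concatenated block basis automatically nest.
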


\begin{proof}
Given $t\in(0,\infty)$, for $d=1$ define $r_t\colon\Rea\to\Rea$ by
\[
r_t(x)=\min\left\{ 1, \frac{t}{|x|}\right\} x, \quad x\in\Rea.
\]
In general, we define $r_t\colon\Rea^d \to \Rea^d$ by
\[
r_t((x_i)_{i=1}^d)=(r_t(x_i))_{i=1}^d.
\]
The map $r_t$ is $1$-Lipschitz and its range is $B_t:=\{ x \in\Rea ^d \colon \Vert x\Vert_\infty\le t\}$. Let $ S_t\colon \F_p(\Rea^d)\to \F_p(B_t)$ be obtained by linearization of $r_t$. Given $R>0$ such that $t/R\in\Nat$, denote $V_{t,R}=\VV^d_R\cap B_t$, and let $T_R\colon \F_p(\Rea^d)\to \F_p(\VV^d_R) $ and $ T_{t,R}\colon \F_p(B_t)\to \F_p(V_{t,R})$ be the linear maps provided by Theorem~\ref{thm:LipFPB}. Notice that Theorem~\ref{thm:LipFPB} yields a constant $C$ be such that $\Vert T_{t,R}\Vert\le C$ for every $t>0$ and $R$ with $t/R\in\Nat$. Let $L_{\MM,\NN}$ denote the linearization of the inclusion of $\MM$ into $\NN$, and put $L_{R}=L_{\VV^d_R,\Rea^d}$, $L_{t,R}=L_{V_{t,R},B_t}$ and $L'_{t,R}=L_{V_{t,R},\Rea^d}$. We shall prove that
\begin{equation}\label{eq:commutes}
L_{t,R}\circ T_{t,R}\circ S_t = S_t\circ L_{R}\circ T_R, \quad \frac{t}{R}\in\Nat.
\end{equation}
Set $\delta=\delta_{\Rea^d}$, $\delta_t=\delta_{B_t}$, and $\delta_{t,R}=\delta_{V_{t,R}}$.

By Proposition~\ref{prop:FPB}~\ref{prop:FPB:6} a similar argument works for the general case, hence for notational ease we will deal with the case $d=1$. Let $x\in\Rea$. In the case when $|x|\le t$, we have $r_t(x)=x$ and $\Lambda_R(v,x)=0$ unless $|v|\le t$, in which case $r_t(v)=v$. Consequently,
\begin{align*}
S_t(L_{R}(T_R(\delta(x))))
&=\sum_{v\in\VV_R} \Lambda_R(v,x)\delta_t(r_t(v))\\
&=\sum_{v\in\VV_R} \Lambda_R(v,x)\delta_t(v)\\
&=L_{t,R}(T_{t,R}(\delta_t(x)))\\
&=L_{t,R}(T_{t,R}(\delta_t(r_t(x)))).
\end{align*}
In the case when $|x|>t$ we have $\Lambda_R(v,x)=0$ unless $|v|\ge t$ and $\sgn(v)=\sgn(x)$, in which case $r_t(v)=r_t(x)$. Then we have
\begin{align*}
S_t(L_{R}(T_R(\delta(x))))
&=\sum_{v\in\VV_R} \Lambda_R(v,x)\delta_t(r_t(v))\\
&=\sum_{v\in\VV_R} \Lambda_R(v,x)\delta_t(r_t(x))\\
&=\delta_t(r_t(x))\\
&=L_{t,R}(T_{t,R}(\delta_t(r_t(x)))).
\end{align*}
Since $\delta_t(r_t(x))=S_t(\delta(x))$ and $\{ \delta(x) \colon x\in\Rea\}$ spans $\F_p(\Rea)$, \eqref{eq:commutes} holds. Note that the range of $U_{t,R}:= T_{t,R}\circ S_t$ is $\spn\{\delta_{t,R}(x) \colon x\in V_{t,R}\}$. Put $P_{t,R}:=L'_{t,R}\circ T_{t,R}\circ S_t$. By \eqref{eq:commutes}, for every $x\in\Rea^d$ we have
\begin{equation}\label{eq:commutes3}
P_{t,R}(\delta(x)) = \sum_{v\in\VV^d_R} \Lambda^d_R(v,x)\delta(r_t(v)), \quad \frac{t}{R}\in\Nat.
\end{equation}
Thus, for every $v\in \VV^d_R$ we have $P_{t,R}(\delta(v)) = \delta(r_t(v))$, which in combination with \eqref{eq:commutes3} implies that $P_{t,R}$ is a projection with range equal to $\spn\{\delta(x)\colon x\in V_{t,R}\}$. We infer that
\begin{equation}\label{eq:commutes2}
P_{t,R} \circ P_{t',R'} = P_{t',R'} \circ P_{t,R}= P_{t,R}, \quad
\frac{R}{R'}, \frac{t'}{R'}, \frac{t}{R}\in\Nat, \, t'\ge t>0.
\end{equation}
Indeed, $P_{t',R'} \circ P_{t,R}= P_{t,R}$ follows from the fact that the range of $P_{t,R}$ is contained in the range of $P_{t',R'}$ and we have
\begin{align*}
P_{t,R}\circ P_{t',R'} & = L'_{t,R}\circ T_{t,R}\circ S_t\circ L_{B_{t'},\Rea^d}\circ S_{t'}\circ L_{R'}\circ T_{R'}\\
& = L'_{t,R}\circ T_{t,R}\circ S_t\circ L_{R'}\circ T_{R'}\\
&= P_{t,R},
\end{align*}
where in the first equality we used \eqref{eq:commutes}, in the second we used the observation that $S_t\circ L_{B_{t'},\Rea^d}\circ S_{t'} = S_t$ and the third equality follows from Proposition~\ref{prop:FPB}~\ref{prop:FPB:5}.

Hence, if for $n\in\Nat_*$ we put
\[
P_{2n}=P_{k_n,2^{-n}} \text{ and }
P_{2n-1}= P_{k_{n-1},2^{-n}},
\]
we have $P_j\circ P_{j'}=P_{j'}\circ P_j=P_j$ whenever $-1\le j\le j'$. Therefore, there is Schauder decomposition $(Y_j)_{j=-1}^\infty$ of $\F_p(\Rea^d)$ whose associated projections are $(P_j)_{j=-1}^\infty$. Moreover, for all $n\in\Nat_*$ the range of $P_{2n}$ is $\spn\{\delta(x) \colon x\in V_n\}$ and the range of $P_{2n-1}$ is $\spn\{\delta(x) \colon x\in W_n\}$.

Similar arguments as in the proof of Theorem~\ref{thm:basis01d} show that there is a constant $C=C(p,d)$ such that
\[
(f(x))_{x\in W_n\setminus V_{n-1}}
\]
is a $C$-unconditional basis of this $Y_{2n-1}$ for all $n\in\Nat_*$. Let $n\in\Nat_*$. Note that $r_t(x)=r_t(s_n(x))$ for all $t$ with $2^n t\in\Nat$ and all $x\in V_n\setminus W_n$ with $\Vert x \Vert_\infty>t$. In particular,
\[
r_{k_{n-1}}(x)=r_{k_{n-1}}(s_n(x)), \quad x\in V_n\setminus W_n.
\]
Therefore, $P_{2n-1}(f(x))=0$ for every $x\in V_n\setminus W_n$, which in turn implies that $f(x)\in Y_{2n}$. An inductive argument yields that for every $x\in V_n\setminus W_n$,
\[
g(x):=\delta(x)-\delta(r_{k_{n-1}}(x))
\]
is a linear combination of the family (of nonzero vectors)
\[
\BB_n=(f(y))_{y\in V_n\setminus W_n}.
\]
Since, by an argument similar to that used in the proof of Theorem~\ref{thm:basis01d}, $(g(x))_{x\in V_n\setminus W_n}$ generates $Y_{2n}$, $\BB_n$ also generates $Y_{2n}$. Let $F\subset V_n\setminus W_n$ be such that
\[
V_{t-2^{-n},2^{-n}}\setminus W_n
\subset F
\subset V_{t, 2^{-n}} \setminus W_n
\]
for some $t\in(k_{n-1}, k_n]\cap 2^{-n}\Int$. Set $\alpha=(t,2^{-n})$ and let $\widehat{\alpha}$ be the ``predecesor'' of $\alpha$ given by $\widehat{\alpha}=(t-2^{-n},2^{-n})$. Define $r_{\alpha,F}\colon V_{\alpha}\to \Rea^d$ by
\[
r_{\alpha,F}(x)=\begin{cases} r_{t-2^{-n}}(x) &\text{ if } x\in V_{\alpha} \setminus F, \\ x &\text{ if } x\in F. \end{cases}
\]
If $x\in V_{\widehat{\alpha}}$ then $r_{t-2^{-n}}(x)=x$. Given $x\in V_\alpha$ there is $z\in V_{\widehat{\alpha}}$ with $\vert x-z\vert_\infty=2^{-n}$, and $V_{\alpha}$ is $2^{-n}$-separated. Hence, by Lemma~\ref{lem:18}, $r_{\alpha,F}$ is $C_p$-Lipschitz, where $C_p=(1+2^{1/p})$. Let $U_{\alpha,F}\colon \F_p(V_\alpha)\to \F_p(\Rea^d)$ be the linear map defined by $\delta_\alpha(x) \mapsto \delta(x)$ if $x\in F$ and $\delta_\alpha(x) \mapsto \delta (r_{t-2^{-n}}(x))$ if $x\in V_{\alpha}\setminus F$. Set $U_\alpha= T_\alpha\circ S_t$. We infer that, if
\[
Q_{\alpha,F}=U_{\alpha,F}\circ U_\alpha|_{Y_{2n}},
\]
then $\Vert Q_{\alpha,F} \Vert\le C C_p$. Note that $U_\alpha(\delta(x))=\delta_\alpha(x)$ for all $x\in V_\alpha$ and that $U_\alpha(\delta(x))=U_\alpha(\delta(s_n(x)))$ for all $x\in V_n\setminus V_\alpha$. If $x\in V_\alpha$, then $s_n(x)\in V_{\widehat{\alpha}}$, and so $Q_{\alpha,F}(f(x))=f(x)$ for every $f\in F$. If $x\in V_\alpha\setminus V_{\widehat{\alpha}}$, then $s_n(x)=r_{t-2^{-n}}(x)$. Consequently, $Q_{\alpha,F}(f(x))=0$ for every $x\in V_\alpha\setminus F$. Finally, we deduce that $Q_{\alpha,F}(f(x))=0$ for every $x\in V_n\setminus V_{\alpha}$. We infer that, if $(x_j)_{j=1}^{|V_n|-|W_n|}$ is an arrangement of $V_n\setminus W_n$ with $(|x_j|)_{j=1}^{|V_n|-|W_n|}$ non-decreasing, then $(f(x_j))_{j=1}^{|V_n|-|W_n|}$ is a Schauder basis of $Y_{2n-1}$ with basis constant at most $CC_p$.
\end{proof}

\section{Open problems}
\noindent
By \cite{K86}*{Theorem 5.2}, there exists a subspace $Z$ of $\ell_p$ whose Banach envelope is isomorphic to $L_1$, so we would like to know whether $\F_p(\Rea)$ is different from the subspace $Z$, whose existence is guaranteed by a general abstract construction.

\begin{Question}
Let $p\in(0,1)$. Is $\F_p(\Rea)$ isomorphic to a subset of $\ell_p$?
\end{Question}

Once we have told apart $\F_p(\Nat)$ from $\ell_p$ for $p<1$, it is natural to go on with the topic \ref{QB} by trying to determine how many non-isomorphic Lipchitz free $p$-spaces one can obtain from subsets of $\Nat$. Note that if $\NN$ is a subset of $\Nat$ then $\F_p(\NN)$ is a complemented subspace of $\F_p(\Nat)$ by \ref{three:complemeted}. So, this problem connects with that of characterizing the complemented subspaces of $\F_p(\Nat)$. Let us illustrate the question with an example. Suppose that $\NN\subset\Nat$ contains arbitrarily long chains of consecutive integers.
Then, there is $(a_k)_{k=0}^\infty$ in $\Nat$ such that $a_k+\Nat_{2^k-1}\subset \NN$ and $2(a_{k}+2^{k}-1) \le a_{k+1}$ for all $k\in\Nat_*$. Set
\[
\NN_0= \bigcup_{k=0}^\infty a_k+\Nat_{2^k-1}.
\]
Combining \cite{AACD2019}*{Lemma 2.1} with \cite{AACD2020}*{Theorem 5.8} yields $\F_p(\NN_0)\simeq \F_p(\Nat)$. Then, by \ref{three:complemeted}, $\F_p(\NN)$ is complemented in $\F_p(\Nat)$ and, the other way around, $\F_p(\Nat)$ is complemented in $\F_p(\NN)$. Taking into account \ref{three:isomorphismsNd}, Pe{\l}czy\'nski's decomposition method yields $\F_p(\NN)\simeq\F_p(\Nat)$.

\begin{Question}Let $0<p<1$. Does there exist $\NN\subset\Nat$ such that $\F_p(\NN)$ is neither isomorphic to $\ell_p$ nor to $\F_p(\Nat)$?
\end{Question}

A well-known problem in Geometric Functional Analysis is whether $\F(\Nat^2)$ is isomorphic to $\F(\Nat^3)$ or, more generally, whether $\F(\Nat^d)$  is isomorphic to $\F(\Nat^{d+1})$ for $d\ge 2$.
As, by \cite{AACD2018}*{Proposition 4.20}, if  $\F_p(\Nat^d)\simeq\F_p(\Nat^{d+1})$ for some $p<1$ then $\F(\Nat^d)\simeq\F(\Nat^{d+1})$, investigating in more depth the geometry of $\F_p(\Nat^d)$ for $p<1$ and $d\ge 2$ could shed some light into this important question. Similarly,  telling apart $\F_p(\Rea^d)$ from $\F_p(\Rea^{d+1})$ for $p<1$ could be easier than telling apart $\F(\Rea^d)$ from $\F(\Rea^{d+1})$. 
%Since $\F_p(\Rea^d)\simeq \F_p(S_{\Rea^{d+1}})$  
By \cite{AACD2020}*{Theorem 4.21} the latter  is equivalent to proving that Lipschitz $p$-free spaces over Euclidean spaces are not isomorphic to Lipschitz $p$-free spaces over its spheres.

\begin{Question}Let $0<p\le 1$ and $d\ge 2$. Is $\F_p(\Nat^d)$ is isomorphic to $\F_p(\Nat^{d+1})$?
\end{Question}

\begin{Question}Let $0<p\le 1$ and $d\ge 3$. Is $\F_p(\Rea^d)$ is isomorphic to  $\F_p(S_{\Rea^d})$?
\end{Question}

%\bibliography{AnsoBiblio}{}
%\bibliographystyle{plain}

% \bib, bibdiv, biblist are defined by the amsrefs package.
\begin{bibdiv}
\begin{biblist}

\bib{AABW2019}{article}{
author={Albiac, F.},
author={Ansorena, J.~L.},
author={Bern\'a, P.~M.},
author={Wojtaszczyk, P.},
title={Greedy approximation for biorthogonal systems in quasi-banach spaces},
date={2019},
journal={arXiv e-prints},
eprint={1903.11651},
}

\bib{AAW2020}{article}{
author={Albiac, F.},
author={Ansorena, J.~L.},
author={Wojtaszczyk, P.},
title={On certain subspaces of $\ell_p$ for $0<p\le 1$ and their applications to conditional quasi-greedy bases in $p$-banach spaces},
date={2019},
journal={arXiv e-prints},
eprint={1912.08449},
}

\bib{AlbiacKalton2009}{article}{
author={Albiac, F.},
author={Kalton, N.~J.},
title={Lipschitz structure of quasi-{B}anach spaces},
date={2009},
ISSN={0021-2172},
journal={Israel J. Math.},
volume={170},
pages={317\ndash 335},
url={https://doi.org/10.1007/s11856-009-0031-z},
review={\MR{2506329}},
}

\bib{AlbiacKalton2016}{book}{
author={Albiac, F.},
author={Kalton, N.~J.},
title={Topics in {B}anach space theory},
edition={Second},
series={Graduate Texts in Mathematics},
publisher={Springer, [Cham]},
date={2016},
volume={233},
ISBN={978-3-319-31555-3; 978-3-319-31557-7},
url={https://doi.org/10.1007/978-3-319-31557-7},
note={With a foreword by Gilles Godefory},
review={\MR{3526021}},
}

\bib{AACD2018}{article}{
author={{Albiac}, Fernando},
author={{Ansorena}, Jose~L.},
author={{Cuth}, Marek},
author={{Doucha}, Michal},
title={{Lipschitz free $p$-spaces for $0<p<1$}},
date={2018},
journal={arXiv e-prints},
eprint={1811.01265},
note={Accepted for publication in Isr. J. Math},
}

\bib{AACD2019}{article}{
author={Albiac, Fernando},
author={Ansorena, Jos\'{e}~L.},
author={C\'{u}th, Marek},
author={Doucha, Michal},
title={Embeddability of {$\ell_p$} and bases in {L}ipschitz free
{$p$}-spaces for {$0<p\le1$}},
date={2020},
ISSN={0022-1236},
journal={J. Funct. Anal.},
volume={278},
number={4},
pages={108354, 33},
url={https://doi-org.ezproxy.is.cuni.cz/10.1016/j.jfa.2019.108354},
review={\MR{4044745}},
}

\bib{AACD2020}{article}{
author={{Albiac}, Fernando},
author={{Ansorena}, Jose~L.},
author={{Cuth}, Marek},
author={{Doucha}, Michal},
title={Lipschitz free spaces isomorphic to their infinite sums and geometric applications},
date={2020},
journal={arXiv e-prints},
eprint={2005.06555},
}

\bib{AmbrosioPuglisi2016}{misc}{
author={Ambrosio, Luigi},
author={Puglisi, Daniele},
title={Linear extension operators between spaces of lipschitz maps and
optimal transport},
date={2016},
}

\bib{CuthDoucha2016}{article}{
author={C\'{u}th, M.},
author={Doucha, M.},
title={Lipschitz-free spaces over ultrametric spaces},
date={2016},
ISSN={1660-5446},
journal={Mediterr. J. Math.},
volume={13},
number={4},
pages={1893\ndash 1906},
url={https://doi.org/10.1007/s00009-015-0566-7},
review={\MR{3530906}},
}

\bib{Dalet2015}{article}{
author={Dalet, A.},
title={Free spaces over countable compact metric spaces},
date={2015},
ISSN={0002-9939},
journal={Proc. Amer. Math. Soc.},
volume={143},
number={8},
pages={3537\ndash 3546},
url={https://doi.org/10.1090/S0002-9939-2015-12518-X},
review={\MR{3348795}},
}

\bib{Dalet2015b}{article}{
author={Dalet, A.},
title={Free spaces over some proper metric spaces},
date={2015},
ISSN={1660-5446},
journal={Mediterr. J. Math.},
volume={12},
number={3},
pages={973\ndash 986},
url={https://doi.org/10.1007/s00009-014-0455-5},
review={\MR{3376824}},
}

\bib{DoKa2020}{article}{
       author = {Doucha, M.}
       author = {Kaufmann, P.~L.},
        title = {Approximation properties in {L}ipschitz-free spaces over groups},
      journal = {arXiv e-prints},
     keywords = {Mathematics - Functional Analysis},
         year = {2020},
            eid = {arXiv:2005.09785},
 archivePrefix = {arXiv},
       eprint = {2005.09785},
 primaryClass = {math.FA},
       adsurl = {https://ui.adsabs.harvard.edu/abs/2020arXiv200509785D},
      adsnote = {Provided by the SAO/NASA Astrophysics Data System}
}

\bib{WojFonf2008}{article}{
author={Fonf, V.~P.},
author={Wojtaszczyk, P.},
title={Properties of the {H}olmes space},
date={2008},
ISSN={0166-8641},
journal={Topology Appl.},
volume={155},
number={14},
pages={1627\ndash 1633},
url={https://doi.org/10.1016/j.topol.2008.03.008},
review={\MR{2435154}},
}

\bib{GodefroyKalton2003}{article}{
author={Godefroy, G.},
author={Kalton, N.~J.},
title={Lipschitz-free {B}anach spaces},
date={2003},
ISSN={0039-3223},
journal={Studia Math.},
volume={159},
number={1},
pages={121\ndash 141},
url={https://doi.org/10.4064/sm159-1-6},
note={Dedicated to Professor Aleksander Pe{\l}czy\'{n}ski on the
occasion of his 70th birthday},
review={\MR{2030906}},
}

\bib{Godefroy2015}{article}{
author={Godefroy, Gilles},
title={Extensions of {L}ipschitz functions and {G}rothendieck's bounded
approximation property},
date={2015},
journal={North-West. Eur. J. Math.},
volume={1},
pages={1\ndash 6},
review={\MR{3417417}},
}

\bib{Godefroy2015b}{article}{
author={Godefroy, Gilles},
title={A survey on {L}ipschitz-free {B}anach spaces},
date={2015},
ISSN={2080-1211},
journal={Comment. Math.},
volume={55},
number={2},
pages={89\ndash 118},
url={https://doi.org/10.14708/cm.v55i2.1104},
review={\MR{3518958}},
}

\bib{GodefroyOzawa2014}{article}{
author={Godefroy, Gilles},
author={Ozawa, Narutaka},
title={Free {B}anach spaces and the approximation properties},
date={2014},
ISSN={0002-9939},
journal={Proc. Amer. Math. Soc.},
volume={142},
number={5},
pages={1681\ndash 1687},
url={https://doi.org/10.1090/S0002-9939-2014-11933-2},
review={\MR{3168474}},
}

\bib{HN2017}{article}{
author={H\'{a}jek, P.},
author={Novotn\'{y}, M.},
title={Some remarks on the structure of {L}ipschitz-free spaces},
date={2017},
ISSN={1370-1444},
journal={Bull. Belg. Math. Soc. Simon Stevin},
volume={24},
number={2},
pages={283\ndash 304},
url={https://projecteuclid.org/euclid.bbms/1503453711},
review={\MR{3694004}},
}

\bib{HP2014}{article}{
author={H\'{a}jek, P.},
author={Perneck\'{a}, E.},
title={On {S}chauder bases in {L}ipschitz-free spaces},
date={2014},
ISSN={0022-247X},
journal={J. Math. Anal. Appl.},
volume={416},
number={2},
pages={629\ndash 646},
url={https://doi.org/10.1016/j.jmaa.2014.02.060},
review={\MR{3188728}},
}

\bib{Kalton1984}{article}{
author={Kalton, N.~J.},
title={Locally complemented subspaces and $\mathcal{L}_{p}$-spaces for
$0<p<1$},
date={1984},
ISSN={0025-584X},
journal={Math. Nachr.},
volume={115},
pages={71\ndash 97},
url={https://doi-org/10.1002/mana.19841150107},
review={\MR{755269}},
}

\bib{K86}{article}{
author={Kalton, N.~J.},
title={Banach envelopes of nonlocally convex spaces},
date={1986},
ISSN={0008-414X},
journal={Canad. J. Math.},
volume={38},
number={1},
pages={65\ndash 86},
url={https://doi.org/10.4153/CJM-1986-004-2},
review={\MR{835036}},
}

\bib{LP2013}{article}{
author={Lancien, G.},
author={Perneck\'a, E.},
title={Approximation properties and {S}chauder decompositions in
{L}ipschitz-free spaces},
date={2013},
ISSN={0022-1236},
journal={J. Funct. Anal.},
volume={264},
number={10},
pages={2323\ndash 2334},
url={https://doi.org/10.1016/j.jfa.2013.02.012},
review={\MR{3035057}},
}

\bib{Lind1964}{article}{
author={Lindenstrauss, J.},
title={Extension of compact operators},
date={1964},
ISSN={0065-9266},
journal={Mem. Amer. Math. Soc.},
volume={48},
pages={112},
review={\MR{179580}},
}

\bib{NaorSchechtman2007}{article}{
author={Naor, A.},
author={Schechtman, G.},
title={Planar earthmover is not in {$L_1$}},
date={2007},
ISSN={0097-5397},
journal={SIAM J. Comput.},
volume={37},
number={3},
pages={804\ndash 826},
url={https://doi.org/10.1137/05064206X},
review={\MR{2341917}},
}

\bib{PS2015}{article}{
author={Perneck\'{a}, Eva},
author={Smith, Richard~J.},
title={The metric approximation property and {L}ipschitz-free spaces
over subsets of {$\mathbb{R}^N$}},
date={2015},
ISSN={0021-9045},
journal={J. Approx. Theory},
volume={199},
pages={29\ndash 44},
url={https://doi.org/10.1016/j.jat.2015.06.003},
review={\MR{3389905}},
}

\end{biblist}
\end{bibdiv}

\end{document}